\numberwithin{equation}{section}
\newcommand{\be}{\begin{equation}}
\newcommand{\ee}{\end{equation}}
\newcommand{\pd}{\partial}
\newcommand{\R}{\mathbb R}
\newcommand{\HH}{\mathbb H}
\newcommand{\Hm}{\mathcal H}
\newcommand{\C}{\mathcal C}
\newcommand{\co}{\colon}
\newcommand{\D}{\mathcal D}
\newcommand{\ga}{\gamma}
\newcommand{\la}{\lambda}
\newcommand{\ep}{\varepsilon}
\newcommand{\de}{\delta}
\newcommand{\Sec}{\operatorname{Sec}}
\newcommand{\inj}{\operatorname{inj}}
\newcommand{\dir}{\operatorname{dir}}
\newcommand{\diam}{\operatorname{diam}}
\newcommand{\vol}{\operatorname{vol}}
\newcommand{\area}{\operatorname{area}}
\newcommand{\lcr}{\operatorname{lcr}}
\newtheorem{lemma}{Lemma}[section]
\newtheorem{proposition}[lemma]{Proposition}
\newtheorem{theorem}[lemma]{Theorem}
\newtheorem{corollary}[lemma]{Corollary}
\theoremstyle{definition}
\newtheorem{definition}[lemma]{Definition}
\newtheorem{notation}[lemma]{Notation}
\theoremstyle{remark}
\newtheorem*{remark}{Remark}
\begin{document}

\title{Distance difference representations of Riemannian manifolds}

\author{Sergei Ivanov}
\address{St.~Petersburg Department of Steklov Mathematical Institute of
Russian Academy of Sciences,
Fontanka 27, St.Petersburg 191023, Russia}
\address{Saint Petersburg State University, 
7/9 Universitetskaya emb., St. Petersburg 199034, Russia
}
\email{svivanov@pdmi.ras.ru}

\thanks{Research is supported by the Russian Science Foundation grant 16-11-10039}

\keywords{Distance functions, inverse problems}

\subjclass[2010]{53C20}

\begin{abstract}
Let $M$ be a complete Riemannian manifold and $F\subset M$
a set with a nonempty interior.
For every $x\in M$, let $D_x$ denote the function on $F\times F$
defined by $D_x(y,z)=d(x,y)-d(x,z)$
where $d$ is the geodesic distance in $M$.
The map $x\mapsto D_x$ from $M$ to the space
of continuous functions on $F\times F$,
denoted by $\mathcal D_F$, is called a distance difference representation of~$M$.
This representation, introduced recently by M.~Lassas and T.~Saksala,
is motivated by geophysical imaging among other things.

We prove that the distance difference representation $\mathcal D_F$
is a locally bi-Lipschitz homeomorphism onto
its image $\mathcal D_F(M)$
and that for every open set $U\subset M$
the set $\mathcal D_F(U)$ uniquely determines the Riemannian metric on~$U$.
Furthermore the determination of $M$ from  $\mathcal D_F(M)$ is stable if $M$
has a priori bounds on its diameter, curvature, and injectivity radius.
This extends and strengthens
earlier results by M.~Lassas and T.~Saksala.
\end{abstract}

\maketitle

\section{Introduction}

We study the following geometric inverse problem,
first considered by M.~Lassas and T.~Saksala in \cite{LS}, see also \cite{LS1}.
Let $M=(M^n,g)$ be a complete, connected Riemannian manifold.
For $x\in M$, let $D_x\co M\times M\to\R$
be the {\em distance difference function}
defined by
\be\label{e:defD_x}
  D_x(y,z) = d(x,y) - d(x,z), \qquad y,z\in M,
\ee
where $d$ is the geodesic distance in $M$. 

Fix a set $F\subset M$, called the \textit{observation domain},
and consider the restrictions $D_x|_{F\times F}$ of distance difference functions to $F\times F$.
The set of all these restrictions
is called the \textit{distance difference data} for $M$ and~$F$;
we emphasize that it is just a set of functions on $F\times F$
without any additional structure.
The problem is to reconstruct the manifold $M$ and its metric
from this set of functions.
More generally, we consider a variant of the problem
where one knows partial distance difference data
$\{D_x|_{F\times F}\}_{x\in U}$ for some region $U\subset M$
and the goal is to determine the geometry of~$U$.

The notion of distance difference data does not look very natural
but it is a convenient way to represent the following information:
For each $x\in M$, we are given a function on~$F$
that equals the distance function $d(x,\cdot)|_F$ up to an unknown
additive constant.

For a motivation, imagine that $M\simeq D^3$ is the Earth,
the metric $g$ represents the speed of propagation of elastic waves,
and $F=\pd M$ is the Earth's surface.
(Here we generalize our set-up and allow $M$ to have a boundary.)
Assume that micro-earthquakes occur frequently
at unknown points inside the Earth.
When a micro-earthquake occurs at a point $x\in M$
at time~$t$, it produces an elastic wave which
arrives to a point $y\in F$ at time $t+d(x,y)$.
Thus an observer on the surface
can learn the function $t+d(x,\cdot)|_F$.
Since the value of $t$ is unknown, the same information is represented
by the distance difference function $D_x|_{F\times F}$.
Having observed many such events, one collects distance difference
data $\{D_{x_i}|_{F\times F}\}$ for a large set of points $\{x_i\}\subset M$.
Idealizing the situation, we assume that one knows the data for all $x\in M$
or at least for all points from 
some ``active'' region $U\subset M$.
The goal is to learn as much as possible about the metric $g$
from these data.
Clearly the best one can hope for is to reconstruct $(M,g)$ up to
a Riemannian isometry.

Like in \cite{LS} and \cite{LS1}, we actually consider a simpler variant
of the problem where the manifold $M$ has no boundary and 
the observation domain $F$ has a nonempty interior.
In some cases the problem for a manifold with boundary can be reduced
to the simpler variant by means of a
suitable extension of the manifold, see Section~\ref{sec:boundary}.

We refer to \cite{LS,LS1} 
for more thorough discussion of distance
difference representations and their connections to a variety of inverse problems.
See also \cite{KKL,Kyr} for discussion and applications of
the more basic \textit{distance representation},
that is a map $\mathcal R_F\co M\to \C(F)$
defined by $\mathcal R_F(x)=d(x,\cdot)|_F$ for all $x\in M$.
All results of this paper concerning $\D_F$ trivially imply similar statements about $\mathcal R_F$.

\subsection*{Definitions and statement of the results}
Now we proceed with formal definitions and statements.
For a complete, connected, boundaryless Riemannian manifold $M=(M^n,g)$ and a set $F\subset M$
define a map $\D_F\co M\to \C(F\times F)$ by 
$$
\D_F(x)=D_x|_{F\times F}
$$
where $D_x$ is the distance difference function defined by \eqref{e:defD_x}.
The target space $\C(F\times F)$ of $\D_F$ is
the space of continuous functions on $F\times F$
equipped with the sup-norm distance:
$$
\|f_1-f_2\|=\sup_{y,z\in F}\{|f_1(y,z)-f_2(y,z)|\}, \qquad f_1,f_2\in\C(F\times F) .
$$
The map $\D_F$ is called the \textit{distance difference representation} of $M$.

The triangle inequality implies that
$$
 \|\D_F(x)-\D_F(y)\| \le 2d(x,y)
$$
for all $x,y\in M$. 
Thus $\D_F$ is a 2-Lipschitz map.
Note that the norm $\|\D_F(x)\|$ may be infinite if $F$ is unbounded
but the distance $\|\D_F(x)-\D_F(y)\|$ is always finite.

Our first goal is to show that the set $\D_F(M)\subset\C(F\times F)$ is a homeomorphic image of~$M$.
In particular this implies that the distance difference data determines the topology of~$M$.
This was previously known in the case when $M$ is compact, see \cite[Theorem 2.3]{LS}.
In fact, the proof in \cite{LS} shows that $\D_F$ is injective, and by compactness of~$M$ it follows
that $\D_F$ is a homeomorphism onto its image.
In Theorem \ref{t:inverse} below we extend this result to the non-compact case.
Moreover we show that the inverse map $\D_F^{-1}$ is locally Lipschitz.

\begin{theorem}\label{t:inverse}
Let $M^n$ be a complete, connected Riemannian manifold without boundary,
$n\ge 2$, and let $F\subset M$ be a subset with a nonempty interior.
Then the map $\D_F\co M\to\C(F\times F)$ is a homeomorphism onto its image
and the inverse map $\D_F^{-1}\co\D_F(M)\to M$ is locally Lipschitz.
\end{theorem}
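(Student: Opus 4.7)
\emph{Strategy.} The plan is to establish three ingredients: (a) global injectivity of $\D_F$, (b) a local bi-Lipschitz lower bound $\|\D_F(x)-\D_F(x')\|\ge c\,d(x,x')$ on a neighborhood of each $x_0\in M$, and (c) an ``escape'' lemma: if $\D_F(x_n)\to\D_F(x_0)$ then $x_n\to x_0$ in $M$. Combined with the 2-Lipschitz upper bound already noted in the excerpt, these three ingredients yield both assertions of the theorem.

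\emph{Local bi-Lipschitz.} Fix $x_0\in M$. Since $\mathrm{int}(F)$ has positive volume while the cut locus of $x_0$ has measure zero, choose $y_0\in\mathrm{int}(F)\setminus(\{x_0\}\cup\mathrm{Cut}(x_0))$. Near $(x_0,y_0)$ the distance is smooth, so on a product neighborhood $U\times V\subset M\times\mathrm{int}(F)$ the initial unit direction $u(x,y)\in T_xM$ of the minimizing geodesic from $x$ to $y$ depends smoothly on $(x,y)$. For each $x$, the image $\Omega_x=\{u(x,y):y\in V\}$ is open in the unit sphere of $T_xM$. A compactness argument on this sphere (the support-function-style quantity $\sup_{u\in\Omega_{x_0}}\langle u,w\rangle-\inf_{u\in\Omega_{x_0}}\langle u,w\rangle$ is continuous and strictly positive on the compact unit sphere) yields $c>0$ and a smaller neighborhood $U'\subset U$ such that, for every $x\in U'$ and every unit $w\in T_xM$, there exist $y_1,y_2\in V$ with $\langle u(x,y_1)-u(x,y_2),w\rangle\ge c$. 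Given $x,x'\in U'$ with $\ep=d(x,x')$ small, choose $w$ to be the initial direction at $x$ of the geodesic to $x'$; the first variation formula $d(x',y)-d(x,y)=-\ep\langle u(x,y),w\rangle+O(\ep^2)$ (uniform in $y\in\overline V$) then gives $|D_x(y_1,y_2)-D_{x'}(y_1,y_2)|\ge c\ep/2$, which establishes (b).

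\emph{Injectivity.} If $\D_F(x)=\D_F(x')$ then $d(x,\cdot)-d(x',\cdot)$ is constant on $F$. Differentiating in $y$ at any $y\in\mathrm{int}(F)\setminus(\mathrm{Cut}(x)\cup\mathrm{Cut}(x'))$ (a set of positive measure) gives $\grad_y d(x,y)=\grad_y d(x',y)$; hence the minimizing geodesics from $y$ to $x$ and from $y$ to $x'$ share an initial direction, so $x,x'$ both lie on one and the same geodesic through $y$. If $x\ne x'$ this forces $y$ onto a fixed $1$-dimensional geodesic curve, contradicting positive $n$-dimensional measure when $n\ge 2$.

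\emph{Escape and conclusion.} If $\{x_n\}$ is bounded and $\D_F(x_n)\to\D_F(x_0)$, a convergent subsequence (Hopf--Rinow) and injectivity force $x_n\to x_0$, so it suffices to rule out $d(x_n,y_0)\to\infty$. Suppose then $d(x_n,y_0)\to\infty$ and $\D_F(x_n)\to\D_F(x_0)$. Setting $\phi_k(y):=d(x_k,y)-d(x_k,y_0)$, the hypothesis forces $\phi_n\to\phi_0$ uniformly on $F$. Passing to subsequences so that the initial directions $u_n\in T_{y_0}M$ of the minimizing geodesics from $y_0$ to $x_n$ converge to some unit $u_\infty$, the identity $\phi_n(\exp_{y_0}(\pm ru_n))=\mp r$ (along the minimizing segment and its backward extension, which is still distance-maximizing for small $r$) combined with uniform convergence forces $\phi_0(\exp_{y_0}(\pm ru_\infty))=\mp r$ for all small $r>0$, so $u_\infty=u_0$. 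A second-order comparison in transverse directions $v\perp u_0$ then yields the contradiction: the quadratic term of $\phi_0$ at $y_0$ picks up the transverse shape operator of the geodesic sphere of finite radius $d(x_0,y_0)$ about $x_0$ (nontrivial because $y_0\notin\mathrm{Cut}(x_0)$ and $n\ge 2$), while the corresponding quadratic term for $\phi_n$ comes from the shape operator of a geodesic sphere of radius $r_n=d(x_n,y_0)\to\infty$ about $x_n$ and flattens. Making this second-order comparison rigorous in the absence of a priori curvature bounds, and handling the possibility that $y_0\in\mathrm{Cut}(x_n)$ for infinitely many $n$, is the main technical obstacle. Granted the escape property, for each $p_0=\D_F(x_0)$ one obtains $r>0$ with $\D_F^{-1}(B(p_0,r)\cap\D_F(M))\subset U'$, on which $\D_F^{-1}$ is $(2/c)$-Lipschitz; this gives simultaneously the homeomorphism onto the image and the local Lipschitz property of the inverse.
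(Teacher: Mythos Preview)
Your local bi-Lipschitz step (b) is correct and is actually more elementary than the paper's argument (the paper proves the analogous estimate, Proposition~5.2, via a quantitative geodesic-extension lemma and Toponogov comparison, in order to obtain explicit constants). Your injectivity step (a) has a gap: from ``$x,x'$ lie on one geodesic through $y$'' you cannot conclude that $y$ lies on a \emph{fixed} $1$-dimensional curve, since there may be a large family of geodesics joining $x$ to $x'$ (think of antipodes on a round sphere). A correct completion: after relabeling so that $c:=d(x,y)-d(x',y)>0$, you have shown that the unique minimizing geodesic $[xy]$ passes through $x'$ at time $c$, hence $\exp_x(c\,u(x,y))=x'$ for all $y$ in an open set; differentiating in $y$ produces an $(n-1)$-dimensional space of Jacobi fields along each such geodesic vanishing at $0$ and at $c$, so $x'$ is conjugate to $x$ at time $c<d(x,y)$ --- but then $[xy]$ would have an interior conjugate point and could not be minimizing.

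The real problem is (c), and it is not merely the ``technical obstacle'' you flag: the proposed mechanism fails. The claim that the transverse Hessian of $\phi_n$ at $y_0$ ``flattens'' as $r_n=d(x_n,y_0)\to\infty$ is false without curvature control along $[y_0x_n]$; already in $\HH^n$ the principal curvatures of large distance spheres tend to $1$, not $0$, and on a general complete manifold the Hessian of $d(x_n,\cdot)$ at $y_0$ can be essentially arbitrary. (A secondary issue: the identity $\phi_n(\exp_{y_0}(-ru_n))=r$ presumes that $[x_ny_0]$ extends minimizingly past $y_0$, i.e.\ that $y_0\notin\mathrm{Cut}(x_n)$, which you have not arranged.) The paper avoids any escape lemma by proving a quantitative H\"older estimate (Proposition~5.1): using only a \emph{local} lower sectional-curvature bound on a large ball containing $x_0$ and $F$, a volume argument selects $q\in F$ with $\angle qxy$ bounded away from $\pi$, and Toponogov comparison then yields $|xy|\le C_0\|\D_F(x)-\D_F(y)\|^{1/2}$ whenever the right-hand side is below some threshold $\de_0$, for $x$ in a fixed ball and \emph{arbitrary} $y\in M$. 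This single inequality delivers injectivity, the escape property, and continuity of $\D_F^{-1}$ all at once; a step like your (b) then upgrades the local modulus from H\"older to Lipschitz.
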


The local Lipschitz constant of $\D_F^{-1}$ at a point $\D_F(x_0)$
(and the radius of a neighborhood where this constant works)
can be estimated in terms of geometric parameters
such as the size of~$F$, the distance from $x_0$ to~$F$,
curvature and injectivity radius bounds in a suitably large ball.
For details, see Propositions \ref{p:holder} and \ref{p:bilip} in Section~\ref{sec:inverse}.
In the compact case the effective version of Theorem \ref{t:inverse}
is stated as Corollary~\ref{cor:bilip} below.

We denote by $\diam(M)$ the diameter, $\Sec_M$ the sectional curvature,
and $\inj_M$ the injectivity radius of a Riemannian manifold~$M$.
For $n\ge 2$ and $D,K,i_0>0$ we denote by $\mathcal M(n,D,K,i_0)$
the class of all compact boundaryless Riemannian $n$-manifolds $M$ 
with $\diam(M)\le D$, $|\Sec_M|\le K$ and $\inj_M\ge i_0$.

\begin{corollary}\label{cor:bilip}
Let $M\in\mathcal M(n,D,K,i_0)$ and assume that a set $F\subset M$ contains a ball of radius~$\rho_0$.
Then the map $\D_F\co M\to \D_F(M)\subset\C(F\times F)$ is a bi-Lipschitz homeomorphism
with a bi-Lipschitz constant determined by $n$, $D$, $K$, $i_0$, $\rho_0$.
\end{corollary}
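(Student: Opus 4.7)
The upper bound $\|\D_F(x)-\D_F(y)\|\le 2d(x,y)$ is already noted in the introduction, so the task is to establish the converse estimate $d(x,y)\le C\|\D_F(x)-\D_F(y)\|$ with $C=C(n,D,K,i_0,\rho_0)$. The plan is to split this into two regimes: a local one where $d(x,y)$ is small, handled by the effective local statement of Proposition~\ref{p:bilip}, and a global one where $d(x,y)$ is bounded below by a fixed radius, handled by a compactness-plus-injectivity argument using Theorem~\ref{t:inverse}.

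For the local regime, Proposition~\ref{p:bilip} supplies a bi-Lipschitz inverse on a neighborhood of each $\D_F(x_0)$ with constants depending on the distance from $x_0$ to $F$, on curvature and injectivity bounds in a ball around $x_0$, and on $\rho_0$. For every $M\in\mathcal M(n,D,K,i_0)$ with $F$ containing a ball of radius $\rho_0$ these inputs are controlled uniformly: $d(x_0,F)\le D$, $|\Sec_M|\le K$ and $\inj_M\ge i_0$ throughout $M$. Hence Proposition~\ref{p:bilip} produces constants $L,r_0>0$ depending only on $n,D,K,i_0,\rho_0$ such that $d(x,y)\le L\|\D_F(x)-\D_F(y)\|$ whenever $d(x,y)\le r_0$.

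For the global regime it suffices to find a uniform lower bound $\delta=\delta(n,D,K,i_0,\rho_0)>0$ for $\|\D_F(x)-\D_F(y)\|$ over all pairs with $d(x,y)\ge r_0$. I argue by contradiction: a failing sequence would give $M_k\in\mathcal M(n,D,K,i_0)$ with $B(p_k,\rho_0)\subset F_k$ and $x_k,y_k\in M_k$ satisfying $d_{M_k}(x_k,y_k)\ge r_0$ and $\|\D_{F_k}(x_k)-\D_{F_k}(y_k)\|\to 0$. Since restricting the sup-norm to $B(p_k,\rho_0)\times B(p_k,\rho_0)$ only makes it smaller, the convergence survives restriction to these balls. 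By Cheeger's compactness theorem the pointed manifolds $(M_k,p_k)$ subconverge in the $C^{1,\alpha}$-topology to some $(M_\infty,p_\infty)$; the diameter bound lets me extract limit points $x_\infty\ne y_\infty$ in $M_\infty$ with $d_{M_\infty}(x_\infty,y_\infty)\ge r_0$. Distance functions converge uniformly under this convergence, so in the limit $\D_{B(p_\infty,\rho_0)}(x_\infty)=\D_{B(p_\infty,\rho_0)}(y_\infty)$, contradicting the injectivity part of Theorem~\ref{t:inverse} applied to $M_\infty$ with observation domain $B(p_\infty,\rho_0)$.

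Combining the two regimes gives the bi-Lipschitz constant $C=\max\{L,D/\delta\}$, using $d(x,y)\le D$ in the global regime. The main technical step is the Cheeger--Gromov limit: one must check that the limiting metric on $M_\infty$ is regular enough for Theorem~\ref{t:inverse} to apply and that the distance difference functions $\D_{F_k}$ pass to the corresponding object on $M_\infty$. Both points are standard consequences of $C^{1,\alpha}$-precompactness of $\mathcal M(n,D,K,i_0)$: limits are complete $C^{1,\alpha}$ Riemannian manifolds on which geodesic distances converge uniformly on compact sets.
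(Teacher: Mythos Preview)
Your argument is correct in outline, but it takes a different route from the paper in the global regime. The paper handles both regimes with the two effective estimates already proved for Theorem~\ref{t:inverse}: Proposition~\ref{p:bilip} (which you also use) for the local part, and Proposition~\ref{p:holder} for the global part. The H\"older estimate of Proposition~\ref{p:holder} says, with constants $C_0,\de_0$ depending only on $n,D,K,i_0,\rho_0$ (after the usual rescaling), that $\|\D_F(x)-\D_F(y)\|<\de_0$ implies $|xy|\le C_0\|\D_F(x)-\D_F(y)\|^{1/2}$. So if $|xy|\ge r_0$ then $\|\D_F(x)-\D_F(y)\|\ge\min\{\de_0,(r_0/C_0)^2\}=:\de$, giving your global lower bound directly and effectively. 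The paper's final Lipschitz constant is $\max\{c_0^{-1},\de_1^{-1}\diam(M)\}$ with $\de_1$ read off from Proposition~\ref{p:holder}.

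Your compactness argument reaches the same conclusion, but at some cost. First, it is non-constructive: the constant $\de$ exists only by contradiction, whereas the point of Corollary~\ref{cor:bilip} is partly that the bi-Lipschitz constant is \emph{determined} by the parameters. Second, and more substantively, the Cheeger--Gromov limit $M_\infty$ is only a $C^{1,\alpha}$ Riemannian manifold, so invoking Theorem~\ref{t:inverse} for it is, as you note, a point that must be checked. It can be checked---the injectivity comes from Proposition~\ref{p:holder}, whose proof uses only a lower sectional curvature bound and carries over to Alexandrov spaces (see the remark following Proposition~\ref{p:holder})---but once you have Proposition~\ref{p:holder} in hand you might as well use it directly and skip the limit argument entirely. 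In short: your proof works, but the paper's is shorter, avoids convergence theory, and yields explicit constants, because it reuses the quantitative H\"older estimate that was already needed for Theorem~\ref{t:inverse}.
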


Now consider the problem of unique determination of a Riemannian metric from
the distance difference data.
Our informal set-up is the following.
There is an unknown Riemannian manifold $M$ and an unknown open set $U\subset M$.
We know the topology and differential structure of the observation domain $F$
and we are given the subset $\D_F(U)$
of $\C(F\times F)$. 
Our result is that these data determine the geometry of the domain $(U,g|_U)$ uniquely
up to a Riemannian isometry.
The precise statement is the following.

\begin{theorem}\label{t:reconstruction}
Let $M_1=(M_1^n,g_1)$ and $M_2=(M_2^n,g_2)$ be complete, connected, boundaryless Riemannian manifolds
and $n\ge 2$.
Assume that $M_1$ and $M_2$  share a nonempty subset $F$ which 
is open in both manifolds, and 
they induce the same topology and the same differential structure on $F$.
For $i=1,2$, let $\D_F^i$ denote the distance difference representation
of $M_i$ in $\C(F\times F)$.

Let $U_1\subset M_1$ and $U_2\subset M_2$ be open sets such that
the subsets $\D_F^1(U_1)$ and $\D_F^2(U_2)$
of $\C(F\times F)$ coincide.
Then the map $\phi \co U_1\to U_2$ defined by
\be\label{e:def-phi}
 \phi=(\D_F^2)^{-1}\circ \D_F^1|_{U_1}
\ee
is a Riemannian isometry between $(U_1,g_1|_{U_1})$ and $(U_2,g_2|_{U_2})$,
that is, $\phi$ is a diffeomorphism between $U_1$ and $U_2$ and $\phi_*g_1=g_2$.
\end{theorem}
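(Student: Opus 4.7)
The plan is as follows. Theorem~\ref{t:inverse} immediately gives that $\phi$ is a homeomorphism from $U_1$ onto $U_2$. Unpacking the identity $\D_F^1(x)=\D_F^2(\phi(x))$ after fixing a base point $y_0\in F$ yields the functional relation
\[
 d_1(x,y)-d_2(\phi(x),y)=c(x),\qquad y\in F,
\]
for some scalar $c(x)\in\R$ depending only on $x$. The theorem reduces to showing that $\phi$ is smooth and that its differential is pointwise a linear isometry from $g_1$ to $g_2$.

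The first main step is to prove $g_1=g_2$ on $F^\circ$. Differentiating the above identity in $y\in F^\circ$, where both distance functions are smooth away from the relevant cut loci, gives
\[
 d_y d_1(x,\cdot)|_y = d_y d_2(\phi(x),\cdot)|_y \in T_y^*M,
\]
and by the eikonal equation this common covector has norm one with respect to both $g_1$ and $g_2$. For $y_0\in F^\circ$ fixed, as $x$ varies over an open subset of $U_1$ the map $x\mapsto d_y d_1(x,\cdot)|_{y_0}$ sweeps an open subset of the $g_1$-unit cosphere at $y_0$, since it factors through the normalized inverse exponential, a rank-$(n-1)$ submersion onto the unit sphere. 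Two positive-definite quadric hypersurfaces sharing an open subset must coincide as algebraic varieties, forcing $g_1|_{y_0}=g_2|_{y_0}$; varying $y_0$ gives $g_1=g_2$ on $F^\circ$.

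For smoothness of $\phi$ at a fixed $x_0\in U_1$ with $x_0':=\phi(x_0)$, pick two points $y_0,y_0'\in F^\circ$ outside the cut loci of $x_0$ in $M_1$ and of $x_0'$ in $M_2$, chosen so that the initial directions at $x_0$ of the geodesics to $y_0,y_0'$ are linearly independent, and likewise at $x_0'$. The smooth map $x\mapsto(d_y d_1(x,\cdot)|_{y_0},d_y d_1(x,\cdot)|_{y_0'})$ then has rank $n$ at $x_0$---the two rank-$(n-1)$ differentials have trivial joint kernel---hence is a local diffeomorphism onto its image. The analogous map on $M_2$ is a local diffeomorphism at $x_0'$, and since both maps agree under $x\leftrightarrow\phi(x)$, $\phi$ is locally a smooth diffeomorphism.

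For the isometry property, differentiate $d_1(x,y)=d_2(\phi(x),y)+c(x)$ in $x$ at $x_0$ to obtain, for every generic $y\in F^\circ$,
\[
 \eta_1(x_0,y)^{\flat_1} - (d\phi|_{x_0})^*\bigl(\eta_2(x_0',y)^{\flat_2}\bigr) = dc|_{x_0},
\]
where $\eta_i$ is the $g_i$-unit tangent vector at $x_0$ (resp.\ $x_0'$) pointing away from $y$ along the geodesic. As $y$ varies, the first term covers an open subset of the $g_1$-unit cosphere $E_1\subset T_{x_0}^*M_1$, and the second term covers an open subset of the ellipsoid $E_2:=(d\phi|_{x_0})^*(\text{$g_2$-unit cosphere at }x_0')$. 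The identity says $E_1$ and the translated ellipsoid $E_2+dc|_{x_0}$ share an open $(n-1)$-dimensional subset, so they coincide as quadric hypersurfaces. Matching defining polynomials forces both $dc|_{x_0}=0$ (vanishing of the linear term, since $E_1$ is centered at the origin) and $E_1=E_2$; the latter is equivalent to $d\phi|_{x_0}$ being a linear isometry from $(T_{x_0}M_1,g_1)$ to $(T_{x_0'}M_2,g_2)$. Thus $\phi_*g_1=g_2$ pointwise, as required. The main obstacle will be this final ellipsoid-coincidence step: verifying openness of the swept-out subsets in the respective quadrics and extracting both $dc\equiv 0$ and the linear isometry from the polynomial matching.
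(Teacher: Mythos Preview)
Your argument is correct, and it takes a genuinely different route from the paper's proof.

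The paper never establishes smoothness of $\phi$ directly.  Instead it uses comparison geometry (via Corollary~\ref{cor:1stvar}, which in turn rests on the minimizing-extension estimate of Proposition~\ref{p:extension}) to obtain a first-variation formula with quadratic error, valid even at points where the distance function is not smooth.  From this it proves that a map $\psi$ between sets of geodesic directions preserves angles, deduces that $\phi$ is locally $1$-Lipschitz in both directions, and then invokes Myers--Steenrod to get smoothness and the Riemannian isometry at once.  Your approach bypasses all of this: you work on the complement of the relevant cut loci, where distance functions are genuinely smooth, differentiate the identity $d_1(x,y)=d_2(\phi(x),y)+c(x)$ first in $y$ (eikonal equation), then build local charts from pairs of direction maps to get smoothness of $\phi$, and finally differentiate in $x$ to reduce the isometry claim to an ellipsoid-coincidence lemma in one cotangent space.

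What each approach buys: yours is considerably lighter in the smooth category---it needs none of the Toponogov machinery of Sections~\ref{sec:toponogov}--\ref{sec:sasaki}, and the endgame is pure linear algebra.  The paper's route, on the other hand, is designed to survive in low regularity: the first-variation estimate \eqref{e:cor1stvar} and the Rademacher/Myers--Steenrod finish go through for $C^{1,\alpha}$ metrics and Alexandrov spaces with two-sided curvature bounds, which is exactly what is needed for the Gromov--Hausdorff stability argument in Proposition~\ref{p:stability}.  Your argument, relying on smoothness of distance functions off the cut locus and on submersion properties of the normalized inverse exponential, does not obviously extend to that setting.

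One small point worth making explicit in your write-up: in Step~2 you need the directions at $x_0'$ in $M_2$ to be linearly independent as well, and this is an independent genericity condition on $(y_0,y_0')$ from the $M_1$ condition; you should note that both the $M_1$- and $M_2$-bad sets (cut loci and single geodesics) are closed and nowhere dense in $F^\circ$, so a simultaneous choice exists.
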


Theorem \ref{t:reconstruction} applied to $U=M$ yields that
the distance difference data $\D_F(M)$ determine the isometry type of $(M,g)$.
In the case when $M$ is compact this was known due to M.~Lassas and T.~Saksala \cite{LS}.
In \cite{LS1} the result was extended to the following situation:
$U$ is a pre-compact region bounded by a smooth hypersurface $\pd U$
and $F$ is a neighborhood of $\pd U$ in $M\setminus U$.
The proofs in \cite{LS} and \cite{LS1} are based on the same approach:
first it is shown that the two Riemannian manifolds in question are projectively equivalent
and then certain delicate properties of projective equivalences
imply that they are actually isometric.
The last part requires the knowledge
of global distance difference data and it does not work in the localized setting
of Theorem~\ref{t:reconstruction} (where $U$ can be separated away from~$F$).
Another question left open in \cite{LS} and \cite{LS1}
is whether the determination of metric is stable with respect to variations
of the distance difference data.

Our proof of Theorem \ref{t:reconstruction} is based on a different
technique, which works as well for metric spaces with locally bounded curvature
in the sense of Alexandrov.
With a standard compactness argument, this implies
stability of metric determination within the class $\mathcal M(n,D,K,i_0)$:
If two manifolds $M_1$ and $M_2$ from this class
share an open set $F$ and their distance difference data
are Hausdorff-close subsets of $\C(F\times F)$, then the manifolds
are close in Gromov--Hausdorff (and hence in $C^{1,\alpha}$) topology.
See Proposition \ref{p:stability} for details.

\subsection*{Organization of the paper}
The proofs of the theorems are based mainly on distance comparison inequalities
implied by Toponogov's theorem.
These inequalities are collected in Section \ref{sec:toponogov}.
Another key ingredient of the proof is a minimizing geodesic extension
property, see Proposition \ref{p:extension}.
This proposition, which may be of independent interest,
provides a lower bound on the length of a minimizing extension
of a geodesic beyond a non-cut point in terms of the length of
a minimizing extension beyond the other endpoint.
The proof of  Proposition \ref{p:extension} is contained in sections
\ref{sec:extension} and~\ref{sec:sasaki}.
In Section \ref{sec:inverse} we prove Theorem \ref{t:inverse}
and Corollary \ref{cor:bilip}.
In Section \ref{sec:reconstruction} we prove Theorem \ref{t:reconstruction}
and observe that the proof implies stability of the metric determination,
see Proposition \ref{p:stability}.
In Section \ref{sec:boundary} we obtain some partial generalizations
of our results to manifolds with boundaries.

\subsection*{Acknowledgements}
The author thanks Matti Lassas, Yaroslav Kurylev, Charles Fefferman, and Hariharan Narayanan
for discussions that attracted the author's attention to the problem and inspired some ideas of the proofs.

\section{Distance comparison estimates}
\label{sec:toponogov}

In this section we set up notation and prepare some tools
from comparison geometry.
These are pretty standard implications of Toponogov's comparison theorem
and they hold in all Alexandrov spaces of curvature bounded below,
see \cite{BBI,BGP}.
We include all formulations and proofs since we need 
to handle local curvature bounds.

Let $M=(M^n,g)$, $n\ge 2$, be a complete, connected, boundaryless Riemannian manifold.
We use notation $|xy|$ for the distance $d(x,y)$
between points $x,y\in M$.
We denote by $B_r(x)$ the geodesic
ball of radius $r>0$ centered at~$x\in M$,
by $L(\ga)$ the length of a path $\ga$ in~$M$,
and by $SM$ the unit tangent bundle of~$M$.

For $x,y\in M$, we denote by $[xy]$ any minimizing geodesic connecting $x$ and~$y$.
In cases when a minimizing geodesic is not unique, we assume that some choice of $[xy]$ is fixed
for each pair $x,y$.
For $x,y,z\in M$ we denote by $\angle yxz$ the angle between the directions of
$[xy]$ and $[xz]$ in $T_xM$.

In order to handle local curvature bounds 
we introduce the following quantity.

\begin{definition}\label{d:lcr}
For $x\in M$, we define the \textit{lower curvature radius} at $x$,
denoted by $\lcr(x)$ or $\lcr_M(x)$, as the supremum of all $r>0$
such that $\Sec_M\ge -1/r^2$ everywhere in the ball $B_{2r}(x)$.
\end{definition}

Clearly $x\mapsto\lcr(x)$
is a positive 1-Lipschitz function on~$M$.
The lower curvature radius naturally rescales with the metric.
That is, for a rescaled Riemannian manifold $\la M:=(M,\la^2g)$, where $\la>0$,
one has $\lcr_{\la M}(x)=\la\cdot\lcr_M(x)$.

We need the following variant of Toponogov's comparison theorem.

\begin{proposition}\label{p:toponogov}
Let $x,y,z\in M$ and $K>0$ be such that
$\Sec_M\ge-K$ everywhere in the ball $B_{2r}(x)$
where $r=\max\{|xy|,|xz|\}$.
Let $\bar x,\bar y,\bar z$ be points in the rescaled hyperbolic plane $K^{-1/2}\HH^2$
such that $|\bar x\bar y|=|xy|$, $|\bar x\bar z|=|xz|$,
and $\angle\bar y\bar x\bar z=\angle yxz$.
Then $|yz|\le |\bar y\bar z|$.

In particular, if $\lcr(x)\ge 1\ge \max\{|xy|,|xz|\}$ then the inequality $|yz|\le |\bar y\bar z|$
holds for $\bar x,\bar y,\bar z$ constructed as above in the standard hyperbolic plane $\HH^2$.
\end{proposition}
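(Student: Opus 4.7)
The plan is to reduce the statement to the standard local form of Toponogov's hinge comparison and verify that the curvature hypothesis on $B_{2r}(x)$ is sufficient for the argument. First, by rescaling the metric by the factor $K$ (which multiplies distances by $\sqrt{K}$ and divides sectional curvatures by $K$), the general case reduces to $K=1$: we then have $\Sec\ge-1$ on $B_{2r}(x)$ and compare with $\HH^2$. The second assertion of the proposition follows immediately from the first, since the hypothesis $\lcr(x)\ge 1\ge r$ gives $\Sec\ge-1$ on $B_2(x)\supset B_{2r}(x)$ by Definition \ref{d:lcr}.

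I would then run the Sturm / Hessian comparison argument along $\gamma_1=[xy]$, parametrized by arclength on $[0,a]$ with $a=|xy|$. Setting $\phi(t)=d(z,\gamma_1(t))$, the Hessian comparison theorem under $\Sec\ge-1$ yields the pointwise inequality $(\cosh\phi)''(t)\le\cosh\phi(t)$ wherever $\phi$ is smooth, i.e., outside the cut locus of $z$ along $\gamma_1$; this extends to the whole of $[0,a]$ in the barrier / viscosity sense by choosing a minimizing geodesic from $z$ at each cut point and taking an upper support function, a standard device in comparison geometry. The model function $\bar\phi(t)=d(\bar z,\bar\gamma_1(t))$ in $\HH^2$ satisfies the corresponding equality $(\cosh\bar\phi)''=\cosh\bar\phi$ identically. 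The initial conditions match: $\phi(0)=\bar\phi(0)=|xz|$ by construction, and $\phi'(0)=\bar\phi'(0)=-\cos(\angle yxz)$ by the first variation formula. Sturm's comparison principle then yields $\phi\le\bar\phi$ on $[0,a]$, and evaluation at $t=a$ gives the desired inequality $|yz|\le|\bar y\bar z|$.

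The main technical point, and the reason the hypothesis involves $B_{2r}(x)$ rather than the smaller $B_r(x)$, is that the Hessian comparison at $p=\gamma_1(t)$ requires $\Sec\ge-1$ along the entire minimizing geodesic $[pz]$, not merely at $p$. To verify this segment lies in the region where the curvature bound holds, I would apply the triangle inequality: for $q\in[pz]$ with $|pq|=s$ one has both $|xq|\le|xp|+s\le r+s$ and $|xq|\le|xz|+(|pz|-s)\le r+|pz|-s$, so $|xq|\le r+\tfrac{1}{2}|pz|\le r+\tfrac{1}{2}(|xp|+|xz|)\le 2r$. Hence $[pz]\subset\bar B_{2r}(x)$, and by continuity of the sectional curvature the bound $\Sec\ge-1$ remains available on this closed ball. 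This verification is exactly what the factor of $2$ in the hypothesis is designed to accommodate.
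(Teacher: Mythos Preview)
Your proof is correct and takes essentially the same approach as the paper, which simply cites the hinge version of Toponogov's theorem from Petersen and observes that the proof there only needs the curvature bound along minimizing geodesics from one vertex to points of the opposite side. Your version is more explicit—spelling out the Hessian comparison argument and the triangle-inequality verification that these geodesics lie in $\bar B_{2r}(x)$—but the underlying idea is identical.
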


\begin{proof}
The first part is essentially the hinge version of Toponogovs's theorem, see \cite[Ch.~11, Theorem 70]{Pe}.
The difference is that in the standard formulation of Toponogov's theorem
the curvature bound $\Sec_M\ge-K$
is assumed everywhere on~$M$. 
However from the proof in e.g.\ \cite{Pe} one can see that
the curvature bound needs to hold only on the union of all minimizing
geodesics from $y$ to points of~$[xz]$.
Thus the refined formulation works as well.

The second part follows from the first one and the definition of $\lcr(x)$
by setting $K=1$.
\end{proof}

In the rest of this section we deduce several inequalities used throughout the paper.

\begin{lemma}\label{l:1var-ineq}
For any $p,x,y\in M$,
$$
  |py| \le |px| - |xy|\cos\angle pxy + C \frac{|xy|^2}{ \min\{ |px|, \lcr(x) \} }
$$
where $C>0$ is an absolute constant.
\end{lemma}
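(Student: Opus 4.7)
The plan is to apply Toponogov's comparison theorem to reduce to the hyperbolic plane and then extract the bound from the hyperbolic law of cosines. The inequality is homogeneous of degree one in length (both sides scale as $\lambda^1$ under $g \mapsto \lambda^2 g$), so first I would rescale the metric by $1/\lcr(x)$ to assume $\lcr(x) = 1$. The target then reads
$$|py| \le |px| - |xy|\cos\theta + C\,|xy|^2/\min\{|px|, 1\},$$
with $\theta = \angle pxy$, and the stated denominator is recovered on undoing the scaling.

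Two easy reductions then dispose of all cases except the one where Proposition~\ref{p:toponogov} applies cleanly. If $|xy| \ge \min\{|px|, 1\}$ then $|xy|^2/\min\{|px|,1\} \ge |xy|$, while the triangle inequality yields $|py| - |px| + |xy|\cos\theta \le (1+\cos\theta)|xy| \le 2|xy|$, so the bound holds with $C = 2$. If instead $|px| > 1$ and $|xy| \le 1$, I would replace $p$ by the point $p' \in [xp]$ with $|p'x| = 1$: the geodesic $[xp]$ also realizes $[xp']$, so $\angle p'xy = \angle pxy$, and $|py| \le |pp'| + |p'y| = (|px|-1) + |p'y|$. If the lemma is known for $p'$ (where $|p'x| = 1$, hence $\min\{|p'x|, 1\} = 1$), then it follows for $p$ since $\min\{|px|, 1\} = 1$ in this regime as well.

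After these reductions I may assume $|xy| \le |px| \le 1 \le \lcr(x)$, whereupon Proposition~\ref{p:toponogov} gives $|py| \le c$ with $c = |\bar p\bar y|$ in the hyperbolic comparison triangle $\bar p\bar x\bar y \subset \HH^2$. Writing $a = |xy|$, $b = |px|$, the hyperbolic law of cosines reads $\cosh c(a) = \cosh b\cosh a - \sinh b\sinh a\cos\theta$. The auxiliary function $F(a) := \cosh c(a) - \cosh b + a\sinh b\cos\theta$ is easily checked to satisfy $F(0) = F'(0) = 0$ and $F''(a) = \cosh c(a) \le \cosh 2$, so $F(a) = \int_0^a(a-s)\cosh c(s)\,ds \le \tfrac12 a^2\cosh 2$. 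Combining with the convexity bound $\cosh c \ge \cosh b + (c-b)\sinh b$ and the elementary estimate $\sinh b \ge b/2$ for $b \in (0,1]$ yields $c - b \le -a\cos\theta + C'a^2/b$, which is precisely the desired inequality in the reduced regime.

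I foresee no genuine obstacle. The only care needed is bookkeeping the two reductions so that the final constant $C$ is absolute, and verifying that the geodesic truncation $p \mapsto p'$ interacts correctly with the piecewise definition of $\min\{|px|, \lcr(x)\}$; the hyperbolic computation itself is a standard second-order Taylor estimate.
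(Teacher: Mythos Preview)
Your proposal is correct and follows essentially the same approach as the paper: rescale, truncate $p$ to a point at distance $\le 1$ from $x$, apply Toponogov's comparison (Proposition~\ref{p:toponogov}) to reduce to $\HH^2$, and finish with a second-order Taylor estimate there. The only cosmetic differences are that the paper normalizes more aggressively to $\min\{|px|,\lcr(x)\}=1$ (so the truncated point has $|p_1x|=1$ exactly and no $1/b$ factor needs to be tracked), and it phrases the hyperbolic estimate as a bound on the second derivative of the distance function on a fixed annulus rather than via the law of cosines.
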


\begin{proof}
Let $\alpha=\angle pxy$.
By rescaling we may assume that $\min\{ |px|, \lcr(x) \}=1$.
Thus $|px|\ge 1$, $\lcr(x)\ge 1$, and we need to prove that
\be\label{e:1var-ineq1}
  |py| \le |px| - |xy|\cos\alpha + C |xy|^2 .
\ee
We may also assume that $|xy|<1/2$, otherwise \eqref{e:1var-ineq1}
holds for any $C\ge 4$ due to the triangle inequality $|py|\le|px|+|xy|$.

Let $p_1\in[px]$ be such that $|p_1x|=1$.
Pick $\bar p_1,\bar x,\bar y\in\HH^2$
such that $|\bar p_1\bar x|=1$, $|\bar x\bar y|=|xy|$ and $\angle\bar p_1\bar x\bar y=\alpha$.
By Proposition \ref{p:toponogov} we have $|p_1y| \le |\bar p_1\bar y|$.

Consider the distance function $f=|\bar p_1 \cdot|$ on $\HH^2$.
This function is smooth on $\HH^2\setminus\{\bar p_1\}$
and its derivative at $\bar x$ in the direction of $[\bar x\bar y]$ equals $-\cos\alpha$.
Thus
$$
 |p_1y| \le |\bar p_1\bar y| 
 \le |\bar p_1\bar x| - |\bar x\bar y|\cos\alpha + C |\bar x\bar y|^2
 = |p_1x| - |xy|\cos\alpha + C |xy|^2
$$
where $C$ is an absolute constant determined by the maximum second derivative
of~$f$ on $B_2(\bar p_1)\setminus B_{1/2}(\bar p_1)$.
This and the triangle inequality $|py|\le |pp_1|+|p_1y|$ imply \eqref{e:1var-ineq1}
and the lemma follows.
\end{proof}

\begin{lemma}\label{l:short-median}
Let $p,q,x,y\in M$ be such that $x\in [pq]$ and 
$$
r:=\min\{|px|,|qx|,\lcr(x)\} > 0 .
$$
Then
\be\label{e:1stvar}
   \bigl| |px| - |py| - |xy|\cos\angle pxy \bigr| \le  C \frac{|xy|^2}r .
\ee
and
\be\label{e:short-median}
  |py|+|yq| \le |pq| + 2C \frac{|pz|^2}{r}
\ee
where $C>0$ is the absolute constant from
Lemma \ref{l:1var-ineq}.
\end{lemma}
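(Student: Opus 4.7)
The plan is to apply Lemma \ref{l:1var-ineq} twice, once to the triple $(p,x,y)$ and once to $(q,x,y)$, and then combine the two resulting inequalities using the hypothesis $x\in[pq]$. (I will assume that the $|pz|^2$ appearing in the statement of \eqref{e:short-median} is a typo for $|xy|^2$, since no point $z$ has been introduced.)

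First I would record the key consequence of $x\in[pq]$: the directions of $[xp]$ and $[xq]$ in $T_xM$ are antipodal, so
\[
 \angle qxy = \pi - \angle pxy, \qquad \cos\angle qxy = -\cos\angle pxy.
\]
Moreover $\min\{|px|,\lcr(x)\}\ge r$ and $\min\{|qx|,\lcr(x)\}\ge r$, so Lemma \ref{l:1var-ineq} applied to $(p,x,y)$ and to $(q,x,y)$ yields the two \emph{upper} bounds
\begin{align*}
 |py| &\le |px| - |xy|\cos\angle pxy + C\,\tfrac{|xy|^2}{r}, \\
 |qy| &\le |qx| + |xy|\cos\angle pxy + C\,\tfrac{|xy|^2}{r}.
\end{align*}

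For \eqref{e:short-median} I would simply add these two inequalities: the $\cos\angle pxy$ terms cancel, and since $x\in[pq]$ we have $|px|+|qx|=|pq|$, which gives exactly $|py|+|qy|\le |pq|+2C|xy|^2/r$.

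For \eqref{e:1stvar} one direction, namely $|py|-|px|+|xy|\cos\angle pxy\le C|xy|^2/r$, is literally the first displayed inequality. For the reverse direction I would combine the second displayed inequality with the triangle inequality $|py|+|qy|\ge|pq|=|px|+|qx|$: subtracting $|qx|$ from both sides of $|py|+|qy|\ge|px|+|qx|$ and using the bound on $|qy|$ yields $|px|-|py|-|xy|\cos\angle pxy\le C|xy|^2/r$, which is the missing half. I do not expect any real obstacle here — the whole argument is a direct bookkeeping exercise on top of Lemma \ref{l:1var-ineq}; the only subtlety is remembering that the antipodality of $\dir_x(p)$ and $\dir_x(q)$ (which needs $x$ to be an interior point of $[pq]$, guaranteed by $|px|,|qx|\ge r>0$) is what makes the two first-variation errors add up constructively.
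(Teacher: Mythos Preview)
Your proof is correct and follows essentially the same route as the paper: apply Lemma~\ref{l:1var-ineq} to $(p,x,y)$ and $(q,x,y)$, use $\angle qxy=\pi-\angle pxy$, add the two inequalities for \eqref{e:short-median}, and combine the $q$-inequality with the triangle inequality $|py|+|qy|\ge|pq|$ for the lower bound in \eqref{e:1stvar}. Your observation that $|pz|^2$ should be $|xy|^2$ is also correct.
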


\begin{proof}
Let $\alpha=\angle pxy$, then $\angle qxy=\pi-\alpha$.
By Lemma \ref{l:1var-ineq} we have
\be\label{e:median1}
 |py| \le |px| - |xy|\cos\alpha + C \frac{|xy|^2}{r}
\ee
and, using $q$ in place of $p$,
\be\label{e:median2}
 |qy| \le |qx| + |xy|\cos\alpha + C \frac{|xy|^2}{r} .
\ee
Summing \eqref{e:median1} and \eqref{e:median2} yields \eqref{e:short-median}.
Subtracting \eqref{e:median2} from the triangle inequality $|py|+|qy|\ge |pq|$
yields
$$
  |py| \ge |px| - |xy|\cos\alpha - C \frac{|xy|^2}{r} .
$$
This and \eqref{e:median1} implies \eqref{e:1stvar}.
\end{proof}

\begin{lemma}\label{l:shortcut}
Let $x,y,z\in M$ and $\alpha=\pi-\angle yxz$.
Then
$$
  |xy|+|xz|-|yz| \ge c \alpha^2 \min\{|xy|,|xz|,\lcr(x)\}
$$
where $c>0$ is an absolute constant.
\end{lemma}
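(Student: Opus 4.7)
The plan is to reduce to a comparison triangle in the hyperbolic plane via Proposition \ref{p:toponogov} and then perform an explicit hyperbolic trigonometry calculation. Both sides of the claimed inequality scale linearly with the metric (while $\alpha$ is dimensionless), so by rescaling $g$ I may assume $\min\{|xy|,|xz|,\lcr(x)\}=1$; the goal becomes $|xy|+|xz|-|yz|\ge c\alpha^2$, and we have $|xy|,|xz|,\lcr(x)\ge 1$.

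To avoid enlarging the comparison triangle beyond the guaranteed curvature radius, I would truncate the long sides. Let $y'\in[xy]$ and $z'\in[xz]$ be the points at distance $\tfrac12$ from $x$ (these exist since $|xy|,|xz|\ge 1$). Since $y'$ and $z'$ lie on the same geodesic rays from $x$ as $y$ and $z$, we have $\angle y'xz'=\angle yxz=\pi-\alpha$. The triangle inequality yields
$$
|yz|\le |yy'|+|y'z'|+|z'z|=|xy|+|xz|-1+|y'z'|,
$$
so $|xy|+|xz|-|yz|\ge 1-|y'z'|$, and it suffices to show $1-|y'z'|\ge c\alpha^2$. Because $\max\{|xy'|,|xz'|\}=\tfrac12\le 1\le\lcr(x)$, the second part of Proposition \ref{p:toponogov} gives $|y'z'|\le|\bar y'\bar z'|$, where $\bar x,\bar y',\bar z'\in\HH^2$ form a triangle with $|\bar x\bar y'|=|\bar x\bar z'|=\tfrac12$ and $\angle\bar y'\bar x\bar z'=\pi-\alpha$.

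The hyperbolic law of cosines gives the explicit formula $\cosh|\bar y'\bar z'|=\cosh^2(\tfrac12)+\sinh^2(\tfrac12)\cos\alpha$. Writing $u(\alpha):=|\bar y'\bar z'|$, two differentiations show $u(0)=1$, $u'(0)=0$, and $u''(0)=-\sinh^2(\tfrac12)/\sinh 1<0$, while $u(\alpha)<1$ on $(0,\pi]$ since $\cos\alpha<1$ there. Hence $1-u(\alpha)\ge c\alpha^2$ on $[0,\pi]$ for some absolute $c>0$, completing the argument. I do not anticipate any serious difficulty; the only subtle point is the truncation step, since applying Toponogov directly to the original triangle $xyz$ would demand a curvature bound on a ball of radius $\max\{|xy|,|xz|\}$ around $x$, which need not be available from $\lcr(x)$ alone.
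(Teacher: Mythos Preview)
Your proof is correct and follows essentially the same route as the paper: rescale, truncate the sides near $x$, apply Proposition~\ref{p:toponogov} to the truncated triangle, and finish with the hyperbolic law of cosines. The only cosmetic differences are that the paper truncates at distance $1$ rather than $\tfrac12$ and uses the explicit inequality $1-\cos\alpha\ge\tfrac{2}{\pi^2}\alpha^2$ instead of your Taylor-expansion/compactness argument.
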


\begin{proof}
By rescaling we may assume that $\min\{|xy|,|xz|,\lcr(x)\}=1$.
Let $y_1\in[xy]$ and $z_1\in[xz]$ be the points at distance $1$ from~$x$.
The triangle inequality $|yz|\le|yy_1|+|y_1z_1|+|z_1z|$ implies that
$$
  |xy|+|xz|-|yz| \ge |xy_1|+|xz_1|-|y_1z_1| =2-|y_1z_1|.
$$
Thus it suffices to show that $|y_1z_1|\le 2-c\alpha^2$
for a suitable constant $c>0$.

Pick $\bar x,\bar y_1,\bar z_1\in\HH^2$ such that
$|\bar x\bar y_1|=|\bar x\bar z_1|=1$ and $\angle \bar y_1\bar x\bar z_1=\pi-\alpha$.
By Proposition \ref{p:toponogov} we have $|y_1z_1|\le |\bar y_1\bar z_1|$.
It remains to show that $|\bar y_1\bar z_1|\le 2-c\alpha^2$.
This follows from the hyperbolic law of cosines
$$
 \cosh|\bar y_1\bar z_1| = \cosh^2 1 - \sinh^2 1 \cdot \cos\alpha
 = \cosh 2 - \sinh^2 1 \cdot (1-\cos\alpha)
$$
and an elementary inequality $1-\cos\alpha \ge \frac2{\pi^2} \alpha^2$.
\end{proof}

\begin{lemma}\label{l:lipexp}
Let $x,y,z\in M$.

1. If, for some $K,R>0$, one has $|xy|\le R$ and $\Sec_M\ge-K$ everywhere in the ball $B_{2R}(x)$,
then 
\be\label{e:lipexp}
  |yz| \le C_{K,R}\cdot \angle yxz + \bigl| |xy|-|xz| \bigr| .
\ee
where $C_{K,R}>0$ is determined by $K$ and $R$.

2. If $|xy|\le\lcr(x)$, then
\be\label{e:lipexp1}
  |yz| \le 2|xy|\cdot \angle yxz + \bigl| |xy|-|xz| \bigr| .
\ee
\end{lemma}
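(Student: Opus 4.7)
The plan is to reduce both parts to the hinge version of Toponogov's comparison (Proposition \ref{p:toponogov}) applied to an isoceles triangle at $x$, by projecting the longer of the two geodesics $[xy]$, $[xz]$ to the length of the shorter one. This keeps both endpoints inside the region where the curvature bound is controlled, which is essential in part~1 where no a priori bound on $|xz|$ is assumed.

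Concretely, I would set $s=\min\{|xy|,|xz|\}$, let $y_0\in[xy]$ and $z_0\in[xz]$ be the points at distance $s$ from $x$, and observe that
\[
 |yz|\le |yy_0|+|y_0z_0|+|z_0z| = |y_0z_0| + \bigl||xy|-|xz|\bigr|,
\]
since $|yy_0|+|z_0z|=(|xy|-s)+(|xz|-s)=\bigl||xy|-|xz|\bigr|$. So the whole content of the lemma is an upper bound on $|y_0z_0|$ by a multiple of $\angle yxz$.

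For part~1, I would apply Proposition \ref{p:toponogov} to the isoceles triangle with apex $x$ and equal sides of length $s\le|xy|\le R$; the curvature bound holds on $B_{2s}(x)\subset B_{2R}(x)$, so the hypothesis of the proposition is satisfied. In the model space $K^{-1/2}\HH^2$ the hyperbolic law of cosines gives $\cosh(\sqrt K\,|\bar y_0\bar z_0|)=1+\sinh^2(\sqrt Ks)(1-\cos\angle yxz)$; combining $\cosh u-1\ge u^2/2$ with $1-\cos\alpha\le \alpha^2/2$ yields $|y_0z_0|\le \sinh(\sqrt Ks)/\sqrt K\cdot\angle yxz$, and since $s\le R$ this is bounded by $C_{K,R}\cdot\angle yxz$ with $C_{K,R}:=\sinh(\sqrt KR)/\sqrt K$.

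For part~2, I would rescale by $\lambda=1/\lcr(x)$ so that $\lcr_{\lambda M}(x)=1$ and the two side lengths become $\lambda s\le \lambda|xy|\le 1$; Proposition \ref{p:toponogov} (second part) then applies in the standard $\HH^2$, and the same calculation gives $|y_0z_0|_{\lambda M}\le \sinh(\lambda s)\cdot \angle yxz$. Unrescaling and using the elementary inequality $\sinh(u)\le 2u$ for $u\in[0,1]$ (which follows from $\sinh u/u$ being increasing with $\sinh 1<2$) produces $|y_0z_0|\le 2s\cdot\angle yxz\le 2|xy|\cdot\angle yxz$. There is no real obstacle here; the only point that requires a moment of care is making sure the curvature bound applies, which is precisely why one projects the longer side onto the shorter one rather than trying to use the full triangle $xyz$ directly.
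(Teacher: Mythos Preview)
Your proof is correct and follows essentially the same approach as the paper: project to an isoceles hinge at $x$ with legs of length $\min\{|xy|,|xz|\}$, apply Toponogov's comparison to bound the base by $C_{K,R}\cdot\angle yxz$ (with the same constant $C_{K,R}=\sinh(\sqrt KR)/\sqrt K$, which the paper phrases as $1/(2\pi)$ times the circumference of the radius-$R$ circle in $K^{-1/2}\HH^2$), and recover the general case via the triangle inequality. The only cosmetic difference is in part~2, where the paper rescales by $|xy|^{-1}$ and invokes part~1 with $K=R=1$, while you rescale by $\lcr(x)^{-1}$; both routes land on the factor $\sinh 1<2$.
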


\begin{proof}
1. Let $\alpha=\angle yxz$.
We may assume that $|xy|\le|xz|$, otherwise swap $x$ and $y$.
Let $z_1\in[xz]$ be such that $|xz_1|=|xy|$.
Pick points $\bar x,\bar y,\bar z_1$
in the rescaled hyperbolic plane $K^{-1/2}\HH^2$
such that $|\bar x\bar y|=|\bar x\bar z_1|=|xy|$ and $\angle \bar y\bar x\bar z_1=\alpha$.
By Proposition \ref{p:toponogov} we have 
$$
|yz_1|\le |\bar y\bar z_1| \le C_{K,R}\cdot\alpha
$$
where $C_{K,R}$ equals $1/2\pi$ times the length of the circle of radius $R$ in $K^{-1/2}\HH^2$.
Therefore
$$
 |yz| \le |yz_1|+|z_1z|  \le C_{K,R}\cdot\alpha + |xz|-|xy|
$$
by the triangle inequality and the construction of~$z_1$.

2. Rescale the metric by the factor $|xy|^{-1}$ and observe that
in the rescaled space the assumptions of the first part
are satisfied for $K=R=1$. Hence \eqref{e:lipexp} holds
with the constant $C_{1,1}=\sinh1\le 2$.
Rescaling the metric back yields \eqref{e:lipexp1}.
\end{proof}

\begin{notation}\label{n:dir}
For $x\in M$ and a set $B\subset M$,
we denote by $\dir(x,B)$ or $\dir_M(x,B)$ the subset
of the unit sphere $S_xM\subset T_xM$
consisting of the initial directions of all minimizing geodesics
from $x$ to points of~$B$.
\end{notation}

\begin{lemma}\label{l:dirvol}
For any $K,R>0$ there exist $\la_{K,R}>0$ such that
the following holds.
Let $x\in M$ and let $B\subset B_R(x)$ be a measurable set.
Assume that $\Sec_M\ge-K$ everywhere in the ball $B_{2R}(x)$. Then
\be\label{e:dirvol}
 \Hm^{n-1}(\dir(x,B)) \ge \la_{K,R}^{n-1} \cdot \frac{\vol(B)}{\diam(B)}
\ee
where $\Hm^{n-1}$ denotes the $(n-1)$-dimensional Hausdorff measure.
\end{lemma}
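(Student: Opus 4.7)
My approach is to bound $\vol(B)$ from above in geodesic polar coordinates centered at $x$, using the classical Bishop Jacobian comparison for the exponential map under a lower sectional curvature bound.

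First I would set up polar coordinates $(t,v)\in[0,\infty)\times S_xM$, in which the Riemannian volume element of $M$ takes the form $J(t,v)\,dt\,d\sigma(v)$, where $d\sigma$ is the standard $(n-1)$-dimensional measure on the unit sphere $S_xM$ (so that $d\sigma$ coincides with $\Hm^{n-1}$). Let $\Omega\subset[0,\infty)\times S_xM$ denote the star-shaped open set of pairs $(t,v)$ for which $t$ is strictly less than the cut distance of $x$ along $v$; then $\exp_x$ restricts to a diffeomorphism from $\Omega$ onto $M$ minus the cut locus of~$x$. The key analytic input is the Bishop comparison: whenever $(t,v)\in\Omega$ with $t\le R$, the minimizing radial geodesic $s\mapsto\exp_x(sv)$, $s\in[0,t]$, stays in $B_R(x)\subset B_{2R}(x)$, so by Jacobi field comparison (using only the hypothesis $\Sec_M\ge -K$ along this geodesic) the Jacobian satisfies
\[
 J(t,v)\le J_{-K}(t):=\left(\frac{\sinh(\sqrt K\, t)}{\sqrt K}\right)^{\!n-1}.
\]
Since the curvature bound is invoked only along radial geodesics inside $B_{2R}(x)$, the \emph{local} hypothesis suffices.

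Next, for every $y\in B$ I pick a minimizing geodesic $[xy]$ and let $v\in\dir(x,B)$ be its initial direction; then $y=\exp_x(|xy|\,v)$ with $(|xy|,v)\in\Omega$ and $|xy|\le R$. Applying the change-of-variables formula to $\exp_x$ on $\Omega\cap\{t\le R\}$ yields
\[
 \vol(B)\le\int_{\dir(x,B)}\int_{D(v)} J_{-K}(t)\,dt\,d\sigma(v),
\]
where $D(v):=\{t\in[0,R]:(t,v)\in\Omega,\ \exp_x(tv)\in B\}$. Crucially, $D(v)\subset\{|xy|:y\in B\}$, and by the triangle inequality $|\,|xy_1|-|xy_2|\,|\le |y_1y_2|$ this set has Lebesgue diameter at most $\diam(B)$. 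Since $J_{-K}$ is increasing on $[0,R]$, the inner integral is bounded by $J_{-K}(R)\cdot\diam(B)$, so
\[
 \vol(B)\le \Hm^{n-1}(\dir(x,B))\cdot J_{-K}(R)\cdot\diam(B).
\]
This gives the desired inequality with $\lambda_{K,R}^{n-1}:=1/J_{-K}(R)$, an explicit constant depending only on $n$, $K$, $R$.

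I do not expect any serious obstacle: the essential content is just Bishop's Jacobian comparison combined with the elementary observation that the set of radial distances $\{|xy|:y\in B\}$ has diameter at most $\diam(B)$. The only minor technicalities are the measurability of $\dir(x,B)$ (it is an analytic set, hence universally measurable, which is enough for the outer-measure version of the estimate) and the vanishing contribution of the cut locus in the change of variables, both of which are routine.
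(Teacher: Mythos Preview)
Your argument is correct and in fact yields the same explicit constant $\la_{K,R}=\sqrt K/\sinh(\sqrt K\,R)$ as the paper's, but the route is genuinely different. The paper slices $B$ by distance spheres via the co-area formula, finds by pigeonhole a radius $r$ with $\Hm^{n-1}(B\cap S(r))\ge\vol(B)/\diam(B)$, and then bounds $\Hm^{n-1}(B\cap S(r))$ above by $C_{K,R}^{n-1}\,\Hm^{n-1}(\dir(x,B))$ using the Lipschitz estimate for $v\mapsto\exp_x(rv)$ already established from Toponogov (Lemma~\ref{l:lipexp}). Your approach replaces this co-area\,/\,pigeonhole\,/\,Lipschitz chain by a single integration in polar coordinates with the Jacobian controlled by Bishop comparison; this is more direct and avoids the slice argument entirely. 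The paper's route, on the other hand, stays within the Toponogov-comparison toolkit developed in Section~\ref{sec:toponogov} and therefore carries over verbatim to Alexandrov spaces of curvature bounded below (a generality the paper explicitly cares about), where the smooth Jacobi-field machinery behind your Bishop estimate is not directly available.
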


\begin{proof}
Let $K,R,x,B$ be as above.
Let $d_{\min}$ and $d_{\max}$ denote the supremum and infimum of distances
from $x$ to points of~$B$. Then $d_{\max}-d_{\min}\le\diam(B)$.
For $r>0$, denote by $S(r)$ the sphere of radius $r$ centered at~$x$,
i.e., $S(r)=\{y\in M: |xy|=r\}$.
By the co-area formula for Lipschitz functions (see e.g.\ \cite[Theorem 3.2.11]{Federer})
applied to the distance function of $x$,
$$
 \vol(B) =  \int_{d_{\min}}^{d_{\max}} \Hm^{n-1}(B\cap S(r))\, dr .
$$
Hence there exists $r\in [d_{\min},d_{\max}]$ such that
\be\label{e:slicevol}
  \Hm^{n-1}(B\cap S(r))\ge \frac{\vol(B)}{d_{\max}-d_{\min}}  \ge \frac{\vol(B)}{\diam(B)} .
\ee
Now consider the set $\Sigma=\dir(x,B\cap S(r))\subset\dir(x,B)$
and the map $v\mapsto\exp_x(rv)$ from $\Sigma$ onto $B\cap S(r)$.
By Lemma \ref{l:lipexp}, this map is $C_{K,R}$-Lipschitz, hence
$$
 \Hm^{n-1}(B\cap S(r)) \le C_{K,R}^{n-1} \Hm^{n-1}(\Sigma) .
$$
This and \eqref{e:slicevol} imply \eqref{e:dirvol}
for $\la_{K,R}=C_{K,R}^{-1}$.
\end{proof}

\section{Minimizing geodesic extensions}
\label{sec:extension}

The main result of this section in Proposition \ref{p:extension},
which is a quantitative version of the fact that the cut point
relation in a Riemannian manifold is symmetric.
Recall that a point $x\in M$ is a \textit{cut point} of $p\in M$
if a minimizing geodesic $[px]$ cannot be extended as a minimizing geodesic
beyond~$x$. This can occur in two cases:
either there are two distinct minimizing geodesics from $p$ to $x$,
or $p$ and $x$ are conjugate points along $[px]$. Both properties are symmetric
with respect to $p$ and $x$, hence so is the cut point relation.
Thus, if $[px]$ admits a minimizing extension beyond $p$, then it
also admits a minimizing extension beyond $x$.
Proposition \ref{p:extension} provides a lower bound on the length of this extension.

\begin{proposition}\label{p:extension}
For any $K,R_0>0$ there exists $\la_0=\la_0(K,R_0)>0$ such that the following holds.
Let $M$ be a complete Riemannian manifold,
$p,x\in M$, $r_0>0$,
and assume the following geometric bounds:
\begin{enumerate}
 \item $|px|\le R_0$;
 \item $|\Sec_M|\le K$ everywhere in the ball $B_{R_0+3r_0}(p)$;
 \item $\inj_M(x)\ge r_0$.
\end{enumerate}
Suppose that $p$ belongs to a minimizing geodesic $[qx]$ 
for some $q\in M$.
Then there exists a minimizing extension $[py]$ of $[px]$
such that 
\be\label{e:extension}
  |xy| \ge \la_0 \min\{ |pq| , r_0, K^{-1/2} \} .
\ee
\end{proposition}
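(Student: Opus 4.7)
The plan is to argue by contradiction. Parametrize the minimizing geodesic $[qx]$ as $\gamma$ with $\gamma(0)=p$, $\gamma(a)=x$ (where $a=|px|$), and $\gamma(-b)=q$ (where $b=|pq|$). Since any subsegment of $[qx]$ is minimizing, I may replace $q$ by the point of $[qx]$ at distance $\min\{|pq|,r_0,K^{-1/2}\}$ from $p$ and thus assume $b\le\min\{r_0,K^{-1/2}\}$; the target becomes $|xy|\ge\lambda_0 b$. Fix $s=\lambda_0 b$ for a small constant $\lambda_0=\lambda_0(K,R_0)>0$ to be chosen, set $y_s=\gamma(a+s)$, and suppose for contradiction that $d(p,y_s)<a+s$.

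First rule out the case where $a+s$ is less than the convex radius at $x$: under $|\Sec|\le K$ and $\inj_M(x)\ge r_0$, the ball $B_{r_*}(x)$ with $r_*=\tfrac12\min\{r_0,\pi K^{-1/2}\}$ is geodesically convex, so if $a+s\le r_*$ then $p$ and $y_s$ both lie in it and the unique minimizing geodesic between them is $\gamma|_{[0,a+s]}$, giving $d(p,y_s)=a+s$, contrary to assumption. Hence I may assume $a\ge r_*-s$, and once $\lambda_0$ is small enough this also gives $a\ge b/4$. Let $\sigma$ be a minimizing geodesic from $p$ to $y_s$ with $|\sigma|<a+s$ and initial direction $w=\sigma'(0)$. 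Necessarily $w\ne v=\gamma'(0)$ (else $\sigma$ is a sub-arc of $\gamma$ and $|\sigma|=a+s$); set $\theta=\angle(v,w)>0$.

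Apply the shortcut estimate (Lemma~\ref{l:shortcut}) at $p$ to the corner between $[pq]$ (direction $-v$) and the initial segment of $\sigma$ (direction $w$): taking $\varepsilon=\tfrac12\min\{b,a\}$, and using $b\le\lcr(p)$ (a consequence of $b\le K^{-1/2}$ together with the curvature bound in $B_{R_0+3r_0}(p)$), the lemma gives $d(q,\sigma(\varepsilon))\le b+\varepsilon-c_1\theta^{\,2}\varepsilon$; continuing along $\sigma$ to $y_s$ yields
$$
d(q,y_s)\le b+|\sigma|-c_1\theta^{\,2}\varepsilon<b+a+s-c_1\theta^{\,2}\varepsilon.
$$
Combined with the reverse triangle inequality $d(q,y_s)\ge d(q,x)-d(x,y_s)=a+b-s$, this forces $c_1\theta^{\,2}\varepsilon<2s$, and since $\varepsilon$ is comparable to $b$ (using $a\ge b/4$), I conclude $\theta^{\,2}<C\lambda_0$ for some $C=C(K,R_0)$.

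The decisive step is an a priori lower bound $\theta\ge\theta_0(K,R_0)>0$, which would then force $\lambda_0\ge\theta_0^{\,2}/C$ and contradict our initial choice. The idea is that $(a+s)v$ and $|\sigma|w$ are two distinct preimages of $y_s$ under $\exp_p$ of comparable norms, and Jacobi-field/Rauch comparison along $\gamma|_{[0,a+s]}$ under $|\Sec|\le K$ shows that $\exp_p$ is a quantitative local diffeomorphism near $(a+s)v$, forcing a definite angular separation of preimages. This is straightforward in the conjugate-free regime $a+s\le\pi K^{-1/2}$; the remaining case, where $\gamma|_{[0,a+s]}$ may contain a conjugate point of $p$, is the main obstacle and is the role of the Sasaki-metric material of Section~\ref{sec:sasaki}, where one lifts $\gamma$ and its variations to $SM$ with its Sasaki metric and uses quantitative Lipschitz control of the geodesic flow (in terms of $|\Sec|\le K$ and $\inj_M(x)\ge r_0$) to establish $\theta_0$ uniformly, regardless of whether conjugate points are present.
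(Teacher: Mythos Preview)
Your argument has a genuine gap at the ``decisive step,'' the claimed uniform lower bound $\theta\ge\theta_0(K,R_0)>0$. No such bound follows from the hypotheses, and the material of Section~\ref{sec:sasaki} does not supply it. When the first conjugate point of $p$ along $\gamma$ lies in $[a,a+s]$ (which is not excluded: $\gamma|_{[0,a]}$ is minimizing but $\gamma|_{[0,a+s]}$ need not be), the differential of $\exp_p$ is near-degenerate near $(a+s)v$, so the minimizing geodesic $\sigma$ to $y_s$ can make an arbitrarily small angle $\theta$ with $\gamma$ at~$p$. The assumption $\inj_M(x)\ge r_0$ controls conjugate points \emph{of $x$} within distance $r_0$, but says nothing about conjugate points \emph{of $p$} at distance $a$, which may be much larger than $r_0$; so your ``quantitative local diffeomorphism'' claim for $\exp_p$ fails precisely in the regime you defer to Section~\ref{sec:sasaki}. (Even in the conjugate-free regime $a+s\le\pi K^{-1/2}$ your claim is not obvious: $\exp_p$ is then an immersion on the relevant ball in $T_pM$, but without a bound on $\inj_M(p)$ it need not be injective, so two preimages of $y_s$ can still be close.)

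What Section~\ref{sec:sasaki} actually proves (Lemma~\ref{l:beta-vs-alpha}) is $\beta\le C_1\alpha$, an \emph{upper} bound on the angle $\beta$ at the endpoint $y_s$ in terms of the angle $\alpha=\theta$ at $p$; this is Lipschitz control of the geodesic flow forward from $p$, and it yields no lower bound on $\alpha$. The paper uses this bound differently. It introduces the point $z$ on $\sigma$ at distance $2r$ from $y_s$ and bounds the single quantity $|xz|-r$ from \emph{below} by $c\alpha^2 r_1$ (via the shortcut lemma at $p$ using the extension to $q$, essentially your step~(B)) and from \emph{above} by $C_2\alpha^2 r$ (via $\beta\le C_1\alpha$ together with a short-median estimate near $y_s$, which is where $\inj_M(x)\ge r_0$ is used). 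Comparing the two bounds cancels $\alpha^2$ and gives $r>(c/C_2)r_1$, the desired conclusion; no lower bound on $\alpha$ is needed or available. Your upper bound $\theta^2<C\lambda_0$ is correct and corresponds to the paper's lower estimate on $|xz|-r$, but to close the argument you must replace the nonexistent bound $\theta\ge\theta_0$ by a matching upper estimate of this type.
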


Our main application of Proposition \ref{p:extension} is the following.

\begin{corollary}\label{cor:1stvar}
For any $K,R_0,r_0>0$ there exists $\Lambda=\Lambda(K,R_0,r_0)>0$ such that the following holds.
Let $M$ be a complete Riemannian manifold, $p,x\in M$,
and assume the bounds (1)--(3) from Proposition \ref{p:extension}.

Suppose that $p$ belongs to a minimizing geodesic $[qx]$ 
for some $q\in M$.
Then, for every $z\in M$,
\be\label{e:cor1stvar}
  \bigl| |px| - |pz| - |xz|\cos\angle pxz \bigr| 
  \le  \frac{\Lambda |xz|^2}{\min\{ |px|,|pq|, 1 \}} .
\ee
\end{corollary}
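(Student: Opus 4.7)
The strategy is to reduce Corollary \ref{cor:1stvar} to Lemma \ref{l:short-median} (specifically the first-variation inequality \eqref{e:1stvar}), and the role of Proposition \ref{p:extension} is precisely to supply the extra geodesic segment needed to place $x$ in the interior of a minimizing geodesic.

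First, I would apply Proposition \ref{p:extension} directly: the hypotheses of the corollary match, so we obtain a point $y\in M$ such that $[py]$ is a minimizing geodesic passing through $x$, with
\[
 |xy| \ge \la_0 \min\{|pq|, r_0, K^{-1/2}\}
\]
for some $\la_0=\la_0(K,R_0)>0$. In particular $x$ now lies on the minimizing geodesic $[py]$, so Lemma \ref{l:short-median} is applicable with the triple $(p, y, x)$ in place of $(p,q,x)$ and with the point $z$ from the corollary playing the role of $y$ in the lemma. Inequality \eqref{e:1stvar} then yields
\[
 \bigl| |px| - |pz| - |xz|\cos\angle pxz \bigr|
 \le C \, \frac{|xz|^2}{r},
 \qquad r := \min\{|px|, |xy|, \lcr(x)\}.
\]

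The remaining work is to show that $r^{-1} \le \Lambda / \min\{|px|, |pq|, 1\}$ with a constant $\Lambda=\Lambda(K,R_0,r_0)$. I would bound each of the three terms defining $r$ from below by a constant multiple of $m := \min\{|px|, |pq|, 1\}$. The $|px|$ term is trivial since $|px|\ge m$. For $\lcr(x)$, the curvature bound $|\Sec_M|\le K$ on $B_{R_0+3r_0}(p)\supset B_{3r_0/2}(x)$ gives $\lcr(x)\ge c_1:=\min\{3r_0/2, K^{-1/2}\}$, which depends only on $K,r_0$ and exceeds $c_1\cdot m$ because $m\le1$. For $|xy|$, the extension estimate gives $|xy|\ge \la_0 \min\{|pq|, r_0, K^{-1/2}\}$, and a short case split (depending on whether $|pq|$ is smaller or larger than $\min\{r_0,K^{-1/2}\}$) shows $\min\{|pq|, r_0, K^{-1/2}\}\ge c_2\cdot m$ with $c_2=\min\{1,r_0,K^{-1/2}\}$. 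Combining, $r\ge \min\{1, c_1, \la_0 c_2\}\cdot m$, so choosing $\Lambda = C/\min\{1, c_1, \la_0 c_2\}$ proves the corollary.

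There is essentially no obstacle once Proposition \ref{p:extension} is in hand; all difficulty has been pushed into that proposition. The only care needed is the bookkeeping for the minimum in the denominator, ensuring the constant $\Lambda$ truly depends only on $K$, $R_0$, $r_0$ (and not on $|pq|$ or $|px|$), which is why I normalize by $m\le 1$ in the last step.
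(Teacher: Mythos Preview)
Your proposal is correct and follows essentially the same route as the paper: apply Proposition~\ref{p:extension} to get the minimizing extension $[py]$, then invoke \eqref{e:1stvar} from Lemma~\ref{l:short-median} with $(p,y,x,z)$, and finally bound $\min\{|px|,|xy|,\lcr(x)\}$ from below by a constant times $\min\{|px|,|pq|,1\}$. One small slip: from the curvature bound on $B_{3r_0/2}(x)$ alone you would only get $\lcr(x)\ge\min\{3r_0/4,K^{-1/2}\}$, since the definition of $\lcr$ requires the bound on $B_{2r}(x)$; but since in fact $B_{3r_0}(x)\subset B_{R_0+3r_0}(p)$, your stated value $c_1=\min\{3r_0/2,K^{-1/2}\}$ is still valid (the paper uses the cruder $\min\{r_0,K^{-1/2}\}$).
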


First we deduce Corollary \ref{cor:1stvar} from Proposition \ref{p:extension}.

\begin{proof}[Proof of Corollary \ref{cor:1stvar}]
Let $[py]$ be a minimizing extension of $[px]$ provided by Proposition \ref{p:extension}.
We apply the first part of Lemma \ref{l:short-median} to $p,y,x,z$ and obtain
\be\label{e:cor1stvar1}
  \bigl| |px| - |pz| - |xz|\cos\angle pxz \bigr| 
  \le  \frac{C |xz|^2}{\min\{ |xy|,|px|,\lcr(x) \}}
\ee
where $C$ is an absolute constant.
The assumption (2) of Proposition \ref{p:extension} implies that
$\lcr(x)\ge\min\{K^{-1/2},r_0\}$.
This inequality, \eqref{e:extension} and
\eqref{e:cor1stvar1} imply \eqref{e:cor1stvar}
for a suitable $\Lambda$ determined by $K$, $r_0$, and $\la_0$.
\end{proof}

The proof of Proposition \ref{p:extension} occupies the rest of this section
and the next section.
In this section we prove Proposition \ref{p:extension} assuming a technical
estimate stated as Lemma \ref{l:beta-vs-alpha}.
The next section is devoted to the proof of Lemma \ref{l:beta-vs-alpha}.

First observe that the statement of Proposition \ref{p:extension} is scale invariant.
We replace $M$ by a rescaled manifold $\la M$ where $\la=\max\{r_0^{-1},K^{1/2}\}$
and change the parameters $K,R_0,r_0$ accordingly.
The resulting manifold, for which we reuse the notation $M$,
satisfies $|\Sec_M|\le 1$ in the ball $B_{R_0+3}(p)$ and $\inj_M(x)\ge 1$.
The desired inequality \eqref{e:extension} now takes the form
\be\label{e:extension1}
 |xy| \ge \la_0 \min\{ |pq| , 1 \} .
\ee
Thus it suffices to prove the proposition for $K=r_0=1$.
This is assumed throughout the rest of the proof.
Our goal is to prove that there is a minimizing extension
$[py]$ of $[px]$ satisfying \eqref{e:extension1}
for a suitable $\la_0$ determined by $R_0$.
Since $|\Sec_M|\le 1$ in $B_{R_0+3}(p)$,
the lower curvature radius (see Definition \ref{d:lcr})
is bounded below by~1 in the ball $B_{R_0+1}(p)$.
All points appearing in the proof belong to this ball.

Let $p,q,x$ be as in the formulation of 
Proposition \ref{p:extension}.
We may assume that
$|px| > 1/2$.
Indeed, since $\inj_M(x)\ge 1$ and $|\Sec_M|\le 1$ on $B_1(x)$,
any geodesic of length~1 with midpoint at~$x$ is minimizing.
Thus if $|px|\le 1/2$ then
there is a minimizing extension $[py]$ of $[px]$ with $|xy|=1/2$
and \eqref{e:extension1} holds for any $\la_0\le 1/2$.

Define $L=|px|$ and
$$
 r_1 = \tfrac1{10}\min\{|pq|, 1 \} .
$$
Since $|px| > 1/2$, we have
\be\label{e:L-vs-r1}
  L > 1/2 \ge 5r_1 .
\ee
Let $\ga\co[0,+\infty)\to M$ be the unit-speed geodesic
starting at $p$ and containing $[px]$,
in particular $\ga(0)=p$ and $\ga(L)=x$.
Pick $r\in(0,r_1]$, let $y=\ga(L+r)$,
and assume that the geodesic arc $\ga|_{[0,L+r]}=[px]\cup[xy]$
is not minimizing.
We are going to show that
this non-minimality implies a lower bound on $r/r_1$,
see \eqref{e:case1} below.

Let $L_1=|py|$. Fix a minimizing geodesic $[py]$ and parametrize 
it by arc length as $\ga_1\co[0,L_1]\to M$.
Let $\alpha$ and $\beta$ be the angles between $\ga$ and $\ga_1$ at $p$ and~$y$,
that is,
$\alpha=\angle(\dot\ga(0),\dot\ga_1(0))$ and $\beta=\angle(\dot\ga(L+r),\dot\ga_1(L_1))$.
Since  $[px]\cup[xy]$ is a non-minimizing geodesic,
we have $L_1<L+r$ and $\alpha,\beta>0$.
Also note that 
\be\label{e:L1-vs-r1}
L_1\ge L-r \ge 4r_1 \ge 4r
\ee
by the triangle inequality, \eqref{e:L-vs-r1},
and the inequality $r\le r_1$.

\begin{lemma}\label{l:beta-vs-alpha}
Under the above assumptions,
$
\beta\le C_1\alpha
$ 
where $C_1$ is a positive number determined by~$R_0$.
\end{lemma}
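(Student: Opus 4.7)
The plan is to deform $\ga$ into $\ga_1$ through a one-parameter family $\ga_s$ of geodesics from $p$ and to estimate the resulting variation at time $L_1$ by Jacobi fields, giving control of both the positional and the angular discrepancies at $y$. The curvature bound $|\Sec_M|\le 1$ on $B_{R_0+3}(p)$ supplies Rauch-type bounds on both $|J_s(t)|$ and $|J_s'(t)|$, and this is what will force $\beta\le C_1\alpha$.

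Concretely, I would pick a great-circle arc $v\co[0,1]\to S_pM$ from $\dot\ga(0)$ to $\dot\ga_1(0)$ of length $\alpha$, so that $|v'|\equiv\alpha$ and $v'\perp v$, and set $F(s,t)=\exp_p(tv(s))$, $\ga_s=F(s,\cdot)$. The variation field $J_s(t)=\partial_s F(s,t)$ is a Jacobi field along $\ga_s$ with $J_s(0)=0$ and $J_s'(0)=v'(s)$. Because each $\ga_s|_{[0,L_1]}$ lies in $B_{R_0+1}(p)$ where $|\Sec_M|\le 1$, Rauch's comparison gives $|J_s(t)|\le\alpha\sinh t$, while integrating the Jacobi equation using $|J_s''|\le|J_s|$ (which follows from the curvature bound) gives $|J_s'(t)|\le\alpha\cosh t$ for $t\in[0,L_1]$.

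Evaluating at $t=L_1$, the curve $c(s)=F(s,L_1)$ joins $\ga(L_1)$ to $y$ with length at most $\alpha\sinh L_1$. Since both $\ga(L_1)$ and $y$ lie in $B_r(x)\subset B_{1/2}(x)$ and the ball $B_{1/2}(x)$ is strongly convex under the hypotheses $\inj_M(x)\ge 1$ and $|\Sec_M|\le 1$, the arc $\ga|_{[L_1,L+r]}$ is the unique minimizer between its endpoints, so $d(\ga(L_1),y)=L+r-L_1$, yielding $L+r-L_1\le\alpha\sinh L_1$.

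For the angular estimate, let $P_{s\to 1}$ be parallel transport along $c$ from $c(s)$ to $y$ and put $V(s)=P_{s\to 1}\dot\ga_s(L_1)\in S_yM$. Using $\nabla_s\partial_tF=\nabla_t\partial_sF$, one checks that $V'(s)=P_{s\to 1}J_s'(L_1)$, so $|V'(s)|\le\alpha\cosh L_1$ and the length of $V$ in $S_yM$ is at most $\alpha\cosh L_1$; in particular $\angle(V(0),\dot\ga_1(L_1))=\angle(V(0),V(1))\le\alpha\cosh L_1$. Finally, $V(0)$ and $\dot\ga(L+r)$ are parallel transports of $\dot\ga(L_1)$ from $\ga(L_1)$ to $y$ along two different paths (the curve $c$ and the geodesic $\ga|_{[L_1,L+r]}$ respectively), so they differ by the holonomy around the closed loop $c\cdot\ga|_{[L_1,L+r]}^{-1}$; the loop has length $\le 2\alpha\sinh L_1$, bounds a disk of area $O(\alpha^2)$ by a standard isoperimetric bound, and so has holonomy rotation of size $O(\alpha^2)$ under $|\Sec_M|\le 1$. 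Combining, $\beta\le\alpha\cosh L_1+O(\alpha^2)\le C_1\alpha$ for $C_1$ determined by $R_0$ (when $\alpha\ge 1$ the bound is trivial since $\beta\le\pi$). The delicate step is making the holonomy estimate rigorous in variable curvature; I expect this is precisely the role of the Sasaki-metric framework advertised for the next section.
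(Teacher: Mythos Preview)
Your approach is essentially the paper's: the same one-parameter family of geodesics from $p$, the same Jacobi-field bounds on $|J_s(L_1)|$ and $|J_s'(L_1)|$ (the paper uses the cruder $Ce^{CL_1}$ via Gronwall rather than your $\sinh/\cosh$; note your claim $|J_s''|\le|J_s|$ needs a dimensional constant, cf.~\eqref{e:curvtensor}), and the same comparison of the two parallel transports of $\dot\ga(L_1)$ to~$y$ by a holonomy bound. The ``delicate step'' you flag is exactly Lemma~\ref{l:Psigma}, proved via a local isoperimetric inequality obtained from bi-Lipschitz equivalence of $B_1(x)$ to a Euclidean ball; no explicit Sasaki-metric machinery is invoked.
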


The proof Lemma \ref{l:beta-vs-alpha}
is given in Section~\ref{sec:sasaki}.
%
Here we prove Proposition \ref{p:extension} assuming
that Lemma \ref{l:beta-vs-alpha} holds true.
By \eqref{e:L1-vs-r1} we have $L(\ga_1)=L_1>2r$,
hence there is a point $z=\ga_1(L_1-2r)$ on $\ga_1$
such that $|xz|=2r$ and $|pz|=L_1-2r$.
Since $[pq]\cup[px]$ is a minimizing geodesic, the triangle
inequality implies that
$$
 |pq|+L = |qx| \le |qz| + |xz| .
$$
We add $|pz|$ to both sides of this inequality and rewrite it as
\be\label{e:pqzx}
 |pq|+|pz|-|qz| \le |xz|+|pz|-L = |xz|+L_1-2r-L < |xz|-r .
\ee
Here the last inequality follows from the fact that $L_1<L+r$.


Lemma \ref{l:shortcut} applied to $p,q,z$ yields the
following estimate on the left-hand side of \eqref{e:pqzx}:
\be\label{e:pq+pz-qz}
|pq|+|pz|-|qz| \ge c\alpha^2 \min\{|pq|,|pz|,\lcr(p)\} .
\ee
Observe that $|pq|\ge r_1$ by the definition of $r_1$,
$|pz|=L_1-2r\ge r_1$ by \eqref{e:L1-vs-r1},
and $\lcr(p)\ge 1\ge r_1$.
Thus the minimum in \eqref{e:pq+pz-qz} is bounded below by $r_1$
and we obtain 
$$
|pq|+|pz|-|qz| \ge c\alpha^2r_1 .
$$
This inequality and \eqref{e:pqzx} imply the estimate
\be\label{e:xz-lower}
 |xz| - r > c\alpha^2r_1 .
\ee

Now we estimate $|xz|-r$ from above.
Let $x_1$ be the midpoint of $[zy]$.
Note that $|x_1z|=|x_1y|=r=|xy|$.
The second part of Lemma \ref{l:lipexp} applied to $y,x,x_1$
implies that
\be\label{e:xx1-bound}
|xx_1|\le 2\beta r \le 2C_1\alpha r .
\ee
where the second inequality follows from Lemma \ref{l:beta-vs-alpha}.
Applying the second part of Lemma \ref{l:short-median}
to points $y,z,x_1,x$ 
and taking into account \eqref{e:xx1-bound}
yields that
$$
 |xz|-r = |xz|+|xy|-|yz| \le \frac{2C |xx_1|^2}{\min\{ |x_1y|, |x_1z|, \lcr(x_1) \}} 
 = 2C \frac{|xx_1|^2}{r}\le C_2\alpha^2 r
$$
where $C_2=8CC_1^2$.
This and \eqref{e:xz-lower} imply that 
\be\label{e:case1}
  r > cC_2^{-1} r_1 .
\ee
Recall that this holds for any $r\in(0,r_1]$ 
such that  $\ga|_{[0,L+r]}$ is not minimizing.

Now define $\la_0= \frac1{10}\min \{1, cC_2^{-1} \}$
and apply the above argument to 
$$
  r  = 10\la_0 r_1 = \la_0\min\{|pq|,1\}.
$$
If the geodesic arc $\ga|_{[0,L+r]}=[px]\cup[xy]$
is not minimizing then \eqref{e:case1} holds and hence $10\la_0>cC_2^{-1}$,
contrary to the definition of~$\la_0$.
This contradiction 
shows that $[px]\cup[xy]$ is a minimizing extension of $[px]$
with  $|xy|=\la_0\min\{|pq|,1\}$, so \eqref{e:extension1} is satisfied.
This finishes the proof of Proposition \ref{e:extension}.

\section{Proof of Lemma \ref{l:beta-vs-alpha}}
\label{sec:sasaki}

Here we prove Lemma \ref{l:beta-vs-alpha}
left from the previous section.
Recall that we are dealing with
two unit-speed geodesics $\ga\co[0,L+r]\to M$ and $\ga_1\co[0,L_1]\to M$
between points $p=\ga(0)=\ga_1(0)$
and $y=\ga(L+r)=\ga_1(L_1)$.
We have
\begin{enumerate}
 \item $|L-L_1|\le r\le 1/10$,
 \item $|\Sec_M|\le 1$ in the ball $B_{R_0+3}(p)$
where $R_0$ is an upper bound for~$L$.
 \item $\inj_M(x)\ge 1$ where $x=\ga(L)$.
\end{enumerate}
Our goal is to show that
\be\label{e:beta-goal}
 \beta\le C_1\alpha
\ee
where $\alpha=\angle(\dot\ga(0),\dot\ga_1(0))$,
$\beta=\angle(\dot\ga(L+r),\dot\ga_1(L_1))$
and $C_1$ is a positive number determined by~$R_0$.

Throughout this section we denote by $C$ various absolute positive constants.
The precise value of $C$ may change between the formulas
(and even within one formula).
By $C_1$, $C_2$, etc, we denote positive constants depending on $R_0$
whose values stay fixed (within this section).
We denote by $R$ the curvature tensor of $M$.
Since $R$ can be expressed in terms of sectional curvatures,
we have an estimate
\be\label{e:curvtensor}
 |R(u,v)w| \le C\cdot |u\wedge v|\cdot |w|
\ee
for any $u,v,w\in TM$ tangent to $M$ at a point where $|\Sec_M|\le 1$.

Let $z=\ga(L_1)$.
By construction and assumptions (1)--(3), the points $y$ and
$z$ belong to the ball $B_{1/5}(x)$,
$\inj_M(x)\ge 1$ and $|\Sec_M|\le 1$ on $B_1(x)$. 
For a smooth path $\sigma$ connecting $z$ to $y$,
let $P_\sigma\co T_yM\to T_zM$ denote the Levi-Civita parallel transport along $\sigma$.
Recall that $L(\sigma)$ denotes the length of a path $\sigma$.


\begin{lemma}\label{l:Psigmav0}
There exists a smooth path $\sigma$ connecting
connecting $z$ to $y$ and such that
\be\label{e:Lsigma}
L(\sigma)\le C_2\alpha
\ee
and
\be\label{e:Psigmav0}
|P_\sigma(\dot\ga(L_1))-\dot\ga_1(L_1)| \le C_2\alpha,
\ee
where $C_2=Ce^{CL_1}$ and $C$ is an absolute constant.
\end{lemma}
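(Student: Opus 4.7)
The natural approach is to interpolate $\ga$ and $\ga_1$ by a one-parameter family of geodesics emanating from $p$ and take $\sigma$ as the time-$L_1$ slice of this family. Concretely, let $s\mapsto v_s$, $s\in[0,1]$, be the minimizing great-circle arc in $S_pM$ joining $v_0=\dot\ga(0)$ to $v_1=\dot\ga_1(0)$, parametrized with constant speed $|v'_s|\equiv\alpha$, and set
$$
\ga_s(t)=\exp_p(tv_s),\qquad\sigma(s)=\ga_s(L_1).
$$
Then $\sigma$ is smooth with $\sigma(0)=\ga(L_1)=z$ and $\sigma(1)=\ga_1(L_1)=y$, and every point of the variation lies at distance at most $L_1\le R_0+\tfrac{1}{10}$ from $p$, so the whole picture sits in $B_{R_0+3}(p)$ where $|\Sec_M|\le 1$.

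The key input is a standard Jacobi field growth estimate. The variation fields $J_s(t)=\pd_s\ga_s(t)$ are Jacobi fields along $\ga_s$ with $J_s(0)=0$ and $J_s'(0)=v'_s$, and the curvature bound \eqref{e:curvtensor} applied to the first-order linear system for $(J_s,J_s')$, combined with Gr\"{o}nwall's inequality, yields
$$
|J_s(t)|+|J_s'(t)|\le Ce^{Ct}\alpha,\qquad t\in[0,L_1],
$$
with an absolute constant $C$. Since $\sigma'(s)=J_s(L_1)$, integrating the bound on $|J_s(L_1)|$ over $s\in[0,1]$ immediately gives \eqref{e:Lsigma} with $C_2=Ce^{CL_1}$.

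For \eqref{e:Psigmav0}, consider the tangent field $V(s):=\dot\ga_s(L_1)$ along $\sigma$. The torsion-free identity $\nabla_s\pd_t\ga_s=\nabla_t\pd_s\ga_s$, evaluated at $t=L_1$, shows that the covariant derivative of $V$ along $\sigma$ equals $J_s'(L_1)$. Pulling $V$ back to the fixed tangent space $T_zM$ by parallel transport along $\sigma$ converts this into an ODE in a fixed Euclidean space whose right-hand side has norm at most $Ce^{CL_1}\alpha$; integrating from $s=0$ to $s=1$ produces the required bound relating $P_\sigma V(1)$ and $V(0)$, which is \eqref{e:Psigmav0} (the placement of the isometry $P_\sigma$ is immaterial since $P_\sigma$ preserves norms).

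The main thing to verify carefully is that the Jacobi field estimate is legitimate, i.e., that the curvature bound holds uniformly along every geodesic $\ga_s$ in the family and not just along $\ga$ and $\ga_1$. This is automatic from $\ga_s([0,L_1])\subset B_{L_1}(p)\subset B_{R_0+3}(p)$, so no additional compactness or continuity argument is needed; beyond this observation, the proof is a routine Gr\"{o}nwall computation with the Jacobi equation.
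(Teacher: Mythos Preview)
Your proof is correct and follows essentially the same route as the paper: both construct $\sigma$ as the time-$L_1$ slice of the geodesic variation through the great-circle arc in $S_pM$ from $\dot\ga(0)$ to $\dot\ga_1(0)$, bound the Jacobi fields $J_s$ and their covariant derivatives $J_s'$ via the Jacobi equation and the curvature bound (you invoke Gr\"onwall explicitly, the paper leaves it implicit), and then integrate $|\nabla_sV|=|J_s'(L_1)|$ along $\sigma$ to compare $V$ with its parallel transport. The only cosmetic difference is that the paper parametrizes the arc by $s\in[0,\alpha]$ at unit speed whereas you use $s\in[0,1]$ at speed $\alpha$, which of course makes no difference.
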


\begin{proof}
This lemma follows from the fact that, if $|\Sec_M|\le 1$,
then the time $L_1$ map of the geodesic flow
is Lipschitz with Lipschitz constant $Ce^{CL_1}$.
To avoid discussion of the geodesic flow and the metric on~$SM$,
we give a direct proof of the lemma.

Connect the vectors $\dot\ga(0)$ and $\dot\ga_1(0)$ in $S_pM$
by a circle arc of length $\alpha$ and parametrize this arc as
$s\mapsto u(s)$, $s\in[0,\alpha]$, where $u(0)=\dot\ga(0)$, $u(\alpha)=\dot\ga_1(0)$,
and $|\frac d{ds} u(s)|=1$.
Consider the family of geodesics
$\ga_s\co[0,L_1]\to M$, $s\in[0,\alpha]$,
defined by the initial data $\ga_s(0)=p$ and $\dot\ga_s(0)=u(s)$.
Define $\sigma(s)=\gamma_s(L_1)$ and $V(s)=\dot\ga_s(L_1)$ for all $s\in[0,\alpha]$.
Note that the path $\sigma\co[0,\alpha]\to M$ connects $z$ to~$y$,
$V(0)=\dot\ga(L_1)$, and $V(\alpha)=\dot\ga_1(L_1)$.

Let $J_s$ be the Jacobi field along $\ga_s$ defined by
$J_s(t)=\frac d{ds} \ga_s(t)$.
Then
$$
 \frac \nabla{ds} V(s) = \frac \nabla{ds}  \frac d{dt}\Big|_{t=L_1} \ga_s(t)
 = \frac \nabla{dt}\Big|_{t=L_1} \frac d{ds} \ga_s(t) 
 = J_s'(L_1) .
$$
where $J_s'$ is a notation for $\frac\nabla{dt}J_s(t)$.
The Jacobi field $J_s(t)$ satisfies the Jacobi equation
$$
 J_s''= R(J_s,\dot\ga_s)\dot\ga_s,
$$
where $J_s''$ is a notation for $\frac\nabla{dt}\frac\nabla{dt}J_s$.
Hence
$ |J_s''| \le C\cdot |J_s| $
due to \eqref{e:curvtensor} and
the curvature bound $|\Sec_M|\le 1$ that we have everywhere on~$\ga_s$.
This and the initial data $J_s(0)=0$ and $|J_s'(0)|=|\frac d{ds} u(s)|=1$
imply that $|J_s(L_1)|$ and $|J_s'(L_1)|$ are bounded by $C_2=Ce^{CL_1}$
where $C$ is an absolute constant.
Thus
\be\label{e:Lsigma1}
 L(\sigma) = \int_0^\alpha | J_s(L_1) | \,ds \le C_2\alpha
\ee
and
\be\label{e:nablaV1}
 \int_0^\alpha \left| \frac\nabla{ds} V(s) \right| \,ds 
 = \int_0^\alpha \left| J'_s(L_1) \right| \,ds 
 \le C_2\alpha .
\ee
Note that \eqref{e:Lsigma1} implies \eqref{e:Lsigma}.
It remains to prove \eqref{e:Psigmav0}.

Let $W$ be the parallel vector field along $\sigma$
with initial data $W(0)=V(0)$.
Then
\be\label{e:nablaV2}
 |W(\alpha)-V(\alpha)|
 \le \int_0^\alpha \left| \frac\nabla{ds} (W(s) - V(s)) \right| \,ds
 = \int_0^\alpha \left| \frac\nabla{ds} V(s) \right| \,ds \le C_2\alpha
\ee
by \eqref{e:nablaV1}.
Since $W(\alpha)=P_\sigma(V(0))=P_\sigma(\dot\ga(L_1))$ and
$V(\alpha)=\dot\ga_1(L_1)$,
the desired inequality  \eqref{e:Psigmav0} follows from \eqref{e:nablaV2}.
\end{proof}


\begin{lemma}\label{l:Psigma}
Let $\sigma_0,\sigma_1\co[0,1]\to B_1(x)$ be smooth paths connecting $z$ to~$y$.
Then, for every $v\in S_zM$,
\be\label{e:Psigma}
 | P_{\sigma_1}(v)- P_{\sigma_0}(v) | \le C (L(\sigma_1)+L(\sigma_2))^2
\ee
where $C$ is an absolute constant.
\end{lemma}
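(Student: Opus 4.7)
The plan is to reduce the comparison of the two parallel transports to a variational identity along a well-chosen homotopy, and then invoke the uniform curvature bound on $B_1(x)$. Reparametrize so that $\sigma_0,\sigma_1\co[0,1]\to B_1(x)$ share the endpoints $\sigma_i(0)=z$ and $\sigma_i(1)=y$, and let $H\co[0,1]\times[0,1]\to B_1(x)$ be a smooth homotopy with $H(i,\cdot)=\sigma_i$ for $i=0,1$, $H(s,0)\equiv z$, $H(s,1)\equiv y$. Define $W(s,t)\in T_{H(s,t)}M$ to be the parallel transport of $v$ along $t\mapsto H(s,t)$, so that $W(s,0)=v$ and $W(i,1)=P_{\sigma_i}(v)$. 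Since $s\mapsto H(s,1)\equiv y$, the covariant derivative $\nabla_s W(s,1)$ coincides with the ordinary derivative of the $T_yM$-valued curve $s\mapsto W(s,1)$, and hence
\begin{equation*}
|P_{\sigma_1}(v)-P_{\sigma_0}(v)|=|W(1,1)-W(0,1)|\le \int_0^1 |\nabla_s W(s,1)|\,ds.
\end{equation*}

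The central step is to compute $\nabla_s W(s,1)$ using the commutation identity $\nabla_s\nabla_t W-\nabla_t\nabla_s W=R(\partial_s H,\partial_t H)W$. Because $\nabla_t W\equiv 0$ by definition of $W$ and $\nabla_s W(s,0)=0$ (the vector $v$ is $s$-independent along the constant curve $s\mapsto H(s,0)=z$), integration in $t$ yields
\begin{equation*}
\nabla_s W(s,1)=-\int_0^1 R(\partial_s H,\partial_t H)W\,dt.
\end{equation*}
Since $|\Sec_M|\le 1$ throughout $B_1(x)$, \eqref{e:curvtensor} and $|W|=|v|$ give $|\nabla_s W(s,1)|\le C\int_0^1|\partial_s H|\,|\partial_t H|\,dt$ with an absolute $C$. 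Combining this with the previous display,
\begin{equation*}
|P_{\sigma_1}(v)-P_{\sigma_0}(v)|\le C\iint_{[0,1]^2}|\partial_s H|\,|\partial_t H|\,ds\,dt.
\end{equation*}

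What remains is to choose $H$ so that the last integral is bounded by $C(L(\sigma_0)+L(\sigma_1))^2$. I will use the normal chart $\exp_x\co\{|u|<1\}\to B_1(x)$, which is a bi-Lipschitz diffeomorphism with an absolute distortion constant by Rauch comparison (using $\inj_M(x)\ge 1$ and $|\Sec_M|\le 1$). Set $\tilde\sigma_i=\exp_x^{-1}\circ\sigma_i$ and $H(s,t)=\exp_x\bigl((1-s)\tilde\sigma_0(t)+s\tilde\sigma_1(t)\bigr)$. Since $\tilde\sigma_0(0)=\tilde\sigma_1(0)$, the elementary Euclidean bound $|\tilde\sigma_1(t)-\tilde\sigma_0(t)|\le C(L(\sigma_0)+L(\sigma_1))$ holds uniformly in $t\in[0,1]$, while for every fixed $s$ the Euclidean length of the interpolated curve $t\mapsto(1-s)\tilde\sigma_0(t)+s\tilde\sigma_1(t)$ is at most $C(L(\sigma_0)+L(\sigma_1))$. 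Transferring both bounds to Riemannian norms via the distortion constant and multiplying produces the required quadratic bound. The only technical ingredients are the Rauch comparison for $\exp_x$ and the pointwise estimate \eqref{e:curvtensor}, both standard under the standing hypotheses, so I do not anticipate a deeper obstruction.
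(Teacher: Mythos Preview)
Your argument is correct and follows essentially the same route as the paper: build a homotopy between $\sigma_0$ and $\sigma_1$, define $W$ by parallel transport along the $t$-curves, use the curvature commutation formula and \eqref{e:curvtensor} to bound $|\nabla_s W(s,1)|$ by an area-type integral, and then control that integral via the bi-Lipschitz equivalence of $B_1(x)$ with a Euclidean ball. The only cosmetic differences are that the paper phrases the last step as an isoperimetric bound on $\area(\Sigma)$ rather than your explicit linear interpolation in normal coordinates, and that your displayed identity for $\nabla_s W(s,1)$ should more carefully be read as a norm inequality (the integrand lives in varying tangent spaces), exactly as the paper writes it.
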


\begin{proof}
Let $\ell=L(\sigma_1)+L(\sigma_2)$.
Due to our curvature and injectivity radius bounds,
the ball $B_1(x)$ is bi-Lipschitz equivalent, with an absolute bi-Lipschitz constant,
to a unit ball in $\R^n$ (see e.g.\ \cite[Ch.~5, Theorem 27]{Pe}).
Hence, by the Euclidean isoperimetric inequality,
the paths $\sigma_0$ and $\sigma_1$ span a 2-disc of area
no greater than~$C\ell^2$.
More precisely, $\sigma_0$ and $\sigma_1$ can be connected
by a smooth homotopy $\Sigma\co[0,1]\times[0,1]\to M$,
$\Sigma(0,\cdot)=\sigma_0$, $\Sigma(1,\cdot)=\sigma_1$,
with fixed endpoints $\Sigma(\cdot,0)=z$ and $\Sigma(\cdot,1)=y$,
and $\area(\Sigma)\le C\ell^2$.
Here $\area(\Sigma)$ is the surface area parametrized by $\Sigma$.
It can be expressed by the formula
$$
 \area(\Sigma) = \iint \left| \Sigma_s(s,t) \wedge \Sigma_t(s,t) \right| \,dsdt .
$$
where $\Sigma_s= \frac{\pd\Sigma}{\pd s}(s,t)$
and $\Sigma_t= \frac{\pd\Sigma}{\pd t}(s,t)$.

Fix $v\in S_zM$ and construct a smooth vector field $W=W(s,t)$ along $\Sigma$ as follows.
For every $s\in[0,1]$, let $W(s,\cdot)$ be the parallel
vector field along the path $\Sigma(s,\cdot)$
with initial data $W(s,0)=v$.
Then $ \frac\nabla{dt} W(s,t)=0 $ and therefore
\be\label{e:nabla2}
 \left|\frac{\nabla}{dt} \frac{\nabla}{ds} W\right|
 = \left|\frac{\nabla}{dt} \frac{\nabla}{ds} W - \frac{\nabla}{ds} \frac{\nabla}{dt} W\right|
 = |R(\Sigma_t,\Sigma_s) W|
\le C |\Sigma_t \wedge \Sigma_s |
\ee
where we have omitted the arguments $(s,t)$.
The last inequality follows from \eqref{e:curvtensor} and the fact that $|W(s,t)|=|v|=1$.
Since $W(\cdot,0)$ is constant, $\frac{\nabla}{ds} W$ vanishes at $t=0$.
Hence, by integration, \eqref{e:nabla2} implies that
$$
\left|\frac{\nabla}{ds} W(s,1)\right| 
 \le \int_0^1 \left| \frac{\nabla}{dt} \frac{\nabla}{ds} W \right| dt
 \le C \int_0^1 |\Sigma_t \wedge \Sigma_s | \, dt .
$$
Recall that $W(s,1)$ belong to one tangent space $T_yM$ for all $s$.
Now integration with respect to $s$ yields that
$$
 | W(1,1)-W(0,1) | \le C \iint \left| \Sigma_s \wedge \Sigma_t \right| \,dsdt  = C \area(\Sigma) \le C\ell^2.
$$
Since $W(1,1)=P_{\sigma_1}(v)$ and $W(0,1)=P_{\sigma_0}(v)$, the lemma follows.
\end{proof}

Now we are in a position to prove Lemma~\ref{l:beta-vs-alpha}.
Let $\sigma$ be a path constructed in Lemma \ref{l:Psigmav0}.
We may assume that $\alpha<1/(2C_2)$, otherwise \eqref{e:beta-goal}
holds for any $C_1\ge 10C_2$ since $\beta\le\pi$.
Then \eqref{e:Lsigma} implies that $L(\sigma)<1/2$, hence $\sigma$
is contained in the ball $B_1(x)$.

We apply Lemma \ref{l:Psigma} to the paths $\sigma_0=[zy]$ and $\sigma_1=\sigma$,
both reparametrized by $[0,1]$, and $v=\dot\ga(L_1)$.
Since $\sigma_0$ is a reparametrization of $\ga|_{[L_1,L+r]}$,
we have $P_{\sigma_0}(\dot\ga(L_1))=\dot\ga(L+r)$.
Thus, by Lemma \ref{l:Psigma},
$$
 |\dot\ga(L+r) - P_\sigma(\dot\ga(L_1))| \le C(L(\sigma)+|zy|)^2 \le 4C L(\sigma)^2 \le 2C L(\sigma)
$$
since $|zy|\le L(\sigma)\le 1/2$.
This and \eqref{e:Lsigma} imply that
$$
 |\dot\ga(L+r) - P_\sigma(\dot\ga(L_1))| \le C_3\alpha
$$
for a suitable constant $C_3=C_3(R_0)>0$.
This and \eqref{e:Psigmav0} imply that
\be\label{e:sasaki1}
 |\dot\ga(L+r) - \dot\ga_1(L_1)| \le (C_2+C_3)\alpha .
\ee
On the other hand,
$|\dot\ga(L+r) - \dot\ga_1(L_1)| = 2\sin\frac\beta2 \ge 2\beta/\pi$.
This and \eqref{e:sasaki1} imply the estimate
\eqref{e:beta-goal} with $C_1=\frac\pi2(C_2+C_3)$.
This finishes the proof of Lemma~\ref{l:beta-vs-alpha}.

\section{Invertibility of $\D_F$}
\label{sec:inverse}

In this section we prove Theorem \ref{t:inverse} and Corollary \ref{cor:bilip}.
The proof consists of two steps.
First, in Proposition \ref{p:holder} we show that $\D_F$ is a homeomorphism
onto its image and obtain a preliminary H\"older estimate on $\D_F^{-1}$.
Second, in Proposition \ref{p:bilip} we show that $\D_F$ is locally bi-Lipschitz.
The two propositions are combined in subsection \ref{subsec:proof-inverse}
into a proof of Theorem \ref{t:inverse} and Corollary \ref{cor:bilip}.

Let $M$ and $F$ be as in Theorem \ref{t:inverse}.
By rescaling we may assume that $F$ contains a ball of
radius~1.
The center of this ball is denoted by $q_0$.
We also assume that this ball $B_1(q_0)$ is not the whole $M$.
In particular this implies that $\diam(M)\ge 1$.

\subsection{Preliminary H\"older estimate}
First we show that $\D_F$ is a homeomorphism onto
its image $\D_F(M)\subset\C(F\times F)$.
Since $\D_F$ is a 2-Lipschitz map,
we only need to prove
that $\D_F$ is injective and its inverse map is continuous.
This follows from the next proposition,
which also shows that $\D_F^{-1}$ 
is locally H\"older continuous with exponent~$1/2$.

\begin{proposition}\label{p:holder}
For every $n\ge 2$ and $K,R,\nu>0$ there exist $C_0>0$ and $\de_0>0$
such that the following holds.
Let $M^n$ be a complete Riemannian manifold, $F\subset M$,
and assume that $F$ contains a unit ball $B_1(q_0)\subsetneq M$
with $\vol(B_1(q_0))\ge \nu$
and that $\Sec_M\ge -K$ everywhere in $B_{3R+3}(q_0)$.

Then for every $x\in B_R(q_0)$ and $y\in M$ such that
$$
  \|\D_F(x)-\D_F(y)\|<\de_0,
$$
one has
\be\label{e:holder}
 |xy| \le C_0 \|\D_F(x)-\D_F(y)\|^{1/2} .
\ee
\end{proposition}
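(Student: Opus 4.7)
The hypothesis is precisely that the continuous function
\[
h(a) := d(x, a) - d(y, a),\qquad a \in F,
\]
has oscillation at most $\delta := \|\D_F(x)-\D_F(y)\|$ on $F$, and in particular on $B_1(q_0)$. The plan is to convert this near-constancy of $h$ into quantitative geometric control on the location of $y$ relative to $x$ using Lemmas \ref{l:shortcut} and \ref{l:lipexp}.

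After rescaling (and separately handling the easy case in which $x$ lies very close to $q_0$), I would pick a point $a \in B_{1/2}(q_0)$ not in the cut locus of $x$, let $\gamma$ be the minimizing geodesic from $a$ to $x$, and put $b := \gamma(r)$ with $r \in (0, 1/2]$. Then $b \in B_1(q_0) \subset F$ and $d(x, b) = d(x, a) - r$. Since $h(a) - h(b) = -r - [d(y, a) - d(y, b)]$, the oscillation bound $|h(a) - h(b)| \le \delta$ together with the triangle inequality $d(y, b) \ge d(y, a) - r$ yields the triangle defect
\[
0 \le r + d(y, b) - d(y, a) \le \delta.
\]
Lemma \ref{l:shortcut} with $b$ as pivot then forces $\angle a b y$ to be close to $\pi$; since $b$ is between $a$ and $x$ on $\gamma$ the direction at $b$ toward $a$ is opposite to that toward $x$, so
\[
\angle x b y \le C\sqrt{\delta/\rho}, \qquad \rho := \min\{r, d(b, y), \lcr(b)\}.
\]
The curvature hypothesis gives $\lcr(b) \ge c(K, R) > 0$, and fixing $r = 1/2$ and treating the sub-case $d(b, y)$ very small separately (in which $y$ is near $F$) we may assume $\rho \ge c(K, R) > 0$. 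Feeding this angle bound into Lemma \ref{l:lipexp}(1) at $b$ (the curvature condition on $B_{2(R+1)}(b) \subset B_{3R+3}(q_0)$ holds by assumption) yields
\[
d(x, y) \le C_1(K, R) \sqrt\delta + |h(b)|.
\]

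The main obstacle, which I expect to be the delicate part, is to control $|h(b)|$: the oscillation bound only places $|h(b)|$ within $\delta$ of the reference value $|h(q_0)|$, and the latter may be as large as $d(x, y)$ itself, so a single ray yields a circular estimate. To break this circularity, I would run the construction above at two base points $a^{(1)}, a^{(2)} \in B_{1/2}(q_0)$ whose minimizing geodesics to $x$ make initial directions at $x$ separated by an angle $\ge \theta_0(n, K, R, \nu) > 0$; such a pair exists because Lemma \ref{l:dirvol} gives a lower bound on the $(n-1)$-Hausdorff measure of $\dir(x, B_1(q_0))$ in terms of $n, K, R, \nu$. The local conclusion at each $b^{(i)}$ places $y$ within $C \sqrt\delta \cdot d(b^{(i)}, y)$ of a point $\tilde y_i$ obtained from $x$ by travelling signed arc-length $-h(b^{(i)})$ along the extension of the $i$-th geodesic. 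Because the two rays are transverse at $x$ with angle $\ge \theta_0$, a lower bound of the form $d(\tilde y_1, \tilde y_2) \ge c(\theta_0)\, \sin(\theta_0/2)\cdot \min_i|h(b^{(i)})|$ holds via local control of the exponential map at $x$, valid once $\delta_0$ is chosen small enough that $|h(b^{(i)})| \le \lcr(x)$. Combining with the triangle inequality $d(\tilde y_1, \tilde y_2) \le d(y, \tilde y_1) + d(y, \tilde y_2) \le C \sqrt\delta\cdot (R + 1 + d(x, y))$ forces $\min_i |h(b^{(i)})| \le C(\theta_0) \sqrt\delta \cdot (R + 1 + d(x, y))$, and substituting back into the single-ray bound yields $d(x, y) \le C_0 \sqrt\delta$ after absorbing the $d(x, y)$ term on the right, provided $\delta_0$ is sufficiently small.
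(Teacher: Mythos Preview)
Your opening steps mirror the paper's and are sound: the triangle-defect bound at $b\in[a\,x]\cap F$ together with Lemma~\ref{l:shortcut} at $b$ does give $\angle xby\le C\sqrt{\delta}$, and then Lemma~\ref{l:lipexp} at pivot $b$ yields $|xy|\le C_1\sqrt{\delta}+|h(b)|$. The problem, as you note, is controlling $|h(b)|$.

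Your two-ray transversality fix, however, has a real gap. The point $\tilde y_i$ is defined by travelling signed length $-h(b^{(i)})$ from $x$ along the geodesic $[b^{(i)}x]$ \emph{extended past $x$}; when $h(b^{(i)})<0$ (the generic situation once $|q_0y|\ge|q_0x|$) such a minimizing extension need not exist, since the proposition assumes only a lower curvature bound and no injectivity-radius bound. Your appeal to ``local control of the exponential map at $x$, valid once $\delta_0$ is chosen small enough that $|h(b^{(i)})|\le\lcr(x)$'' is circular: $|h(b^{(i)})|$ is bounded a~priori only by $|xy|$, which is exactly the quantity to be estimated, so no choice of $\delta_0$ forces this. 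Finally, $y$ need not lie in $B_{3R+3}(q_0)$, so comparison estimates at pivot $b^{(i)}$ with radius $|b^{(i)}y|$ may fall outside the region where the curvature hypothesis applies; in particular the error $C\sqrt{\delta}\cdot d(b^{(i)},y)$ you claim is not what Lemma~\ref{l:lipexp} delivers there.

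The paper closes the argument differently and sidesteps all three issues. Instead of two transverse rays it uses Lemma~\ref{l:dirvol} to find \emph{one} point $q\in B_{1/2}(q_0)$ with $\angle qxy\le\pi-\alpha_0$ (Lemma~\ref{l:alpha0}); note that from your two transverse directions this is immediate, since at least one makes angle $\le\pi-\theta_0/2$ with $[xy]$. Taking $p\in[qx]$ with $|pq|=r_1$ and obtaining $\angle xpy\le C_2\sqrt{\delta}$ as you do, the paper then picks $x_1\in[py]$ with $|px_1|=|px|-\delta$ (well-defined, no extension past $x$ needed) and gets $|xx_1|\le C_4\sqrt{\delta}$. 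The excess $|px|+|xy|-|py|$ is now bounded \emph{above} by $C_5\delta/\min\{|xy|,r_1\}$ via Lemma~\ref{l:short-median} at $x_1$, and \emph{below} by $c\alpha_0^2\min\{|xy|,r_1\}$ via Lemma~\ref{l:shortcut} at $x$ using $\angle pxy\le\pi-\alpha_0$. Combining gives $\min\{|xy|,r_1\}^2\le C_6\delta$. Every pivot ($p$, $x$, $x_1$) lies in $B_{R+2}(q_0)$, so only $\lcr$ at those points enters and no geometry near $y$ is ever needed.
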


\begin{remark}
Proposition \ref{p:holder} requires only a lower curvature bound
and its proof can be modified to work for any finite-dimensional
Alexandrov space of curvature bounded below.
The bound $\vol(B_1(q_0))\ge\nu$
is a non-collapsing assumption.
If $M$ has two-sided curvature bounds, then $\vol(B_1(q_0))$
is bounded away from 0 in terms of the curvature bounds and the injectivity radius $\inj_M(q_0)$,
see \cite[Ch.~6, Theorem 27]{Pe}.
\end{remark}

\begin{proof}[Proof of Proposition \ref{p:holder}]
Let $M,F,q_0$ be as in the proposition,
$x\in B_R(q_0)$ and $y\in M\setminus\{x\}$.
Let $\de=\|\D_F(x)-\D_F(y)\|$
and assume that $\de<\de_0$
where $\de_0$ is sufficiently small, depending on the parameters
$n,K,R,\nu$.
The required bounds on $\de_0$ are accumulated in the course of the proof.
We denote by $C_1$, $c_1$, etc, various positive constants
determined by $n,K,R,\nu$.

We may assume that $|q_0x|\le|q_0y|$, otherwise swap $x$ and~$y$.
Let $a=|q_0y|-|q_0x|$.
For every $p\in F$ we have
$$
 \bigl| |py|-|px|-a \bigr| = | D_y(p,q_0)-D_x(p,q_0) | \le \|\D_F(y)-\D_F(x)\| \le \de .
$$
Therefore
\be\label{e:py-px}
 |py| \ge |px| + a - \de \ge |px| - \de
\ee
for all $p\in F$.

\begin{lemma}\label{l:alpha0}
There exists $\alpha_0=\alpha_0(n,K,R,\nu)$ such that the following holds.
For any $M,q_0,x,y$ as above,
there is a point $q\in B_{1/2}(q_0)$ such that $|qx|\ge1/10$
and 
$
\angle qxy\le \pi-\alpha_0
$ 
(for some choice of minimizing geodesics $[xy]$ and $[xq]$
defining this angle).
\end{lemma}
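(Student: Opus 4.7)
The plan is to argue by contradiction, combining the direction--volume estimate of Lemma~\ref{l:dirvol} with Bishop--Gromov volume comparison. Suppose no such $q$ exists: then for every $q\in B_{1/2}(q_0)$ with $|qx|\ge 1/10$ and every choice of minimizing geodesic $[xq]$, one has $\angle qxy>\pi-\alpha_0$ relative to some fixed minimizing geodesic $[xy]$. Let $w\in S_xM$ denote the initial tangent of the chosen $[xy]$, and set $A=B_{1/2}(q_0)\setminus\overline{B_{1/10}(x)}$. The contradictive assumption forces $\dir(x,A)$ into the spherical cap $\{v\in S_xM:\angle(v,-w)\le\alpha_0\}$, whose area satisfies $\Hm^{n-1}(\dir(x,A))\le c_n\alpha_0^{n-1}$ for an absolute constant $c_n>0$.

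For the matching lower bound I would observe that $B_{1/2}(q_0)\subset B_{R+1/2}(x)$ and $B_{2(R+1/2)}(x)\subset B_{3R+3}(q_0)$, so the curvature hypothesis $\Sec_M\ge -K$ in $B_{3R+3}(q_0)$ allows Lemma~\ref{l:dirvol} to be applied with $R+1/2$ in place of $R$:
$$
\Hm^{n-1}(\dir(x,A))\ge\la_{K,R+1/2}^{n-1}\cdot\frac{\vol(A)}{\diam(A)}\ge\la_{K,R+1/2}^{n-1}\cdot\vol(A),
$$
since $\diam(A)\le\diam(B_{1/2}(q_0))\le 1$. Combining both bounds yields $\alpha_0^{n-1}\ge c_{n,K,R}\cdot\vol(A)$, so any uniform positive lower bound $\vol(A)\ge\ep(n,K,R,\nu)$ will produce the desired contradiction once $\alpha_0$ is chosen sufficiently small depending on $n,K,R,\nu$.

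For the volume lower bound I would use the Bishop--Gromov inequality applied to $B_1(q_0)$: this gives $\vol(B_{1/2}(q_0))\ge\nu\cdot V_{-K}(1/2)/V_{-K}(1)$, where $V_{-K}(r)$ denotes the volume of a radius-$r$ ball in the $n$-dimensional simply connected space form of constant curvature $-K$. When $|q_0x|>3/5$ the balls $B_{1/10}(x)$ and $B_{1/2}(q_0)$ are disjoint and $\vol(A)=\vol(B_{1/2}(q_0))$ is already uniformly bounded below; when $|q_0x|\le 3/5$ one subtracts the intersection via the absolute upper bound $\vol(B_{1/10}(x))\le V_{-K}(1/10)$.

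The main obstacle is ensuring that the volume lower bound is uniform in the regime where $x$ is close to $q_0$ and either $\nu$ is small or $K$ is large, so that the Bishop--Gromov bound on $\vol(B_{1/2}(q_0))$ is comparable to $V_{-K}(1/10)$ and the naive subtraction fails to be positive. I expect this is resolved by working inside the proof with a parameter-dependent inner radius $\rho\le 1/10$ on the annulus and then separately exhibiting a point at distance at least $1/10$ from $x$, or alternatively by translating the reference center within $B_1(q_0)\subset F$ to a point at distance $\ge 3/5$ from $x$, reducing to the disjoint case.
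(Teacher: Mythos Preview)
Your strategy matches the paper's: both combine Lemma~\ref{l:dirvol} with the fact that a spherical cap of aperture $\alpha_0$ has small $(n-1)$-measure. The difficulty you isolate --- obtaining a uniform lower bound on $\vol(A)$ when $x$ is close to $q_0$ --- is genuine, and neither of your proposed resolutions closes the gap. Using a parameter-dependent inner radius $\rho<1/10$ produces a point $q$ with the right angle but possibly $|qx|<1/10$, and there is no evident way to push $q$ out to distance $1/10$ while keeping it in $B_{1/2}(q_0)$. Translating the center to a point at distance $\ge 3/5$ from $x$ is impossible when $x$ is near $q_0$: to keep the translated half-ball inside $B_1(q_0)\subset F$ the new center must lie in $B_{1/2}(q_0)$, and then its distance to $x=q_0$ is at most $1/2<3/5$. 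More fundamentally, your subtraction $\nu\cdot V_{-K}(1/2)/V_{-K}(1)-V_{-K}(1/10)$ is negative for large $K$ regardless of how you juggle the centers.

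The paper's fix is a variant of your translation idea with different radii, arranged so that no subtraction is needed at all. Using $\diam(M)\ge 1$, one finds $q_1$ with $|q_0q_1|\le 2/5$ and $|xq_1|\ge 1/5$ (take $q_1=q_0$ if $|q_0x|\ge 1/5$, otherwise any point with $|q_0q_1|=2/5$). Then set $B=B_{1/10}(q_1)$: automatically $B\subset B_{1/2}(q_0)$ and every point of $B$ lies at distance $\ge 1/10$ from $x$, so one works with $B$ itself rather than an annulus. The volume lower bound comes from Bishop--Gromov applied at $q_1$,
\[
\frac{\vol(B_{1/10}(q_1))}{\vol(B_2(q_1))}\ge c_1(n,K),
\qquad
\vol(B_2(q_1))\ge\vol(B_1(q_0))\ge\nu,
\]
which gives $\vol(B)\ge c_1\nu$ directly. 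Feeding this into Lemma~\ref{l:dirvol} and comparing with the cap measure then finishes exactly as in your sketch.
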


\begin{proof}
Since $\diam(M)\ge 1$, there is a point $q_1\in M$ such that $|q_0q_1|\le 2/5$
and $|xq_1|\ge 1/5$.
Indeed, if $|q_0x|\ge1/5$ then one can take $q_1=q_0$, 
otherwise let $q_1$ be any point such that $|q_0q_1|=2/5$.
We are going to choose $q$ from the ball $B:=B_{1/10}(q_1)$.
This guarantees that $q\in B_{1/2}(q_0)$ and $|qx|\ge 1/10$.

By the Bishop-Gromov inequality (see \cite[Ch.~9, Lemma 36]{Pe}),
we have a lower bound on the volume ratio
$$
  \frac{\vol(B)}{\vol(B_{2}(q_1))} \ge c_1 = c_1(n,K) > 0.
$$
Therefore
\be\label{e:volB}
 \vol(B) \ge c_1 \vol(B_{2}(q_1))
 \ge c_1 \vol(B_{1}(q_0)) \ge c_1\nu .
\ee
This and Lemma \ref{l:dirvol} imply a lower bound
\be\label{e:c_2}
 \Hm^{n-1}(\dir(x,B)) \ge c_2 = c_2(n,K,R,\nu) > 0 .
\ee
Here we use the assumptions that $x\in B_R(q_0)$ and
$\Sec_M\ge -K$ on $B_{3R+3}(q_0)$.
They imply that $B\subset B_{R+1}(x)$ and 
$\Sec_M\ge -K$ on $B_{2R+2}(x)$.
Hence Lemma \ref{l:dirvol}, \eqref{e:volB}, and the fact that $\diam(B)<1$
imply \eqref{e:c_2} for $c_2=\la_{K,R+1}^{n-1}c_1\nu$
where $\la_{K,R+1}$ is the constant from Lemma \ref{l:dirvol}.

Fix a minimizing geodesic $[xy]$ and let $v\in S_xM$
be its direction at~$x$.
For $\alpha>0$ let $\Sigma_\alpha$ denote the set of
all $u\in S_xM$ such that $\angle(u,v)>\pi-\alpha$.
In other words, $\Sigma_\alpha$ is the $\alpha$-neighborhood
of $-v$ in the unit sphere $S_xM$.
The volume $\Hm^{n-1}(\Sigma_\alpha)$ of this neighborhood
goes to 0 as $\alpha\to 0$.
Hence for a sufficiently small $\alpha_0=\alpha_0(n,c_2)>0$
we have $\Hm^{n-1}(\Sigma_{\alpha_0})< c_2$.
This and \eqref{e:c_2} imply that that there exists $u\in\dir(x,B)$
such that $u\notin\Sigma_{\alpha_0}$ and hence $\angle(u,v)\le\pi-\alpha_0$.
By the definition of $\dir(x,B)$, the vector $u$ is the initial direction
of a minimizing geodesic $[xq]$ for some $q\in B$, 
hence the result.
\end{proof}

Fix $q\in B_{1/2}(q_0)$
and minimizing geodesics $[xq]$ and $[xy]$
constructed in Lemma~\ref{l:alpha0}.
We define
$$
 r_1=\min\{1/30,K^{-1/2}\}
$$
and require that $\de_0\le r_1$.
The curvature bound assumed in Proposition \ref{p:holder}
implies a bound on the lower curvature radius (see Definition \ref{d:lcr}):
\be\label{e:lcr-bound}
 \lcr(z)\ge r_1 \quad\text{for all $z\in B_{R+2}(q_0)$}
\ee

Recall that $|qx|\ge 1/10 \ge 3r_1$.
Hence there exists $p\in [qx]$ such that $|pq|=r_1$.
Note that
\be\label{e:px-bounds}
  2r_1 \le  |px| \le R+1
\ee
and
\be\label{e:py-ge-r1}
 |py| \ge |px|-\de \ge r_1
\ee
due to \eqref{e:py-px}, \eqref{e:px-bounds} 
and the requirement that $\de<\de_0\le r_1$.
Define
$$
\beta = \angle xpy = \pi-\angle qpy .
$$

\begin{lemma}\label{l:beta}
$\beta\le C_2\de^{1/2}$ for some $C_2>0$ determined by $n,K,R,\nu$.
\end{lemma}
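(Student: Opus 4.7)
\medskip

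The plan is to sandwich $|qy|$ between two inequalities whose combination forces $\beta$ to be small. The crucial preliminary observation, which makes the whole argument run, is that the auxiliary point $p$ itself lies in the observation domain $F$. Indeed, $q$ was chosen in the ball $B_{1/2}(q_0)$ (from the proof of Lemma \ref{l:alpha0}, with $|q_0 q|\le 2/5+1/10$), and $p$ was chosen on $[qx]$ at distance $|pq|=r_1\le 1/30$ from $q$; hence $|q_0 p|\le 1/2+r_1<1$, so $p\in B_1(q_0)\subset F$. This lets me invoke the distance-difference bound not only at $q$ but also at $p$.

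The upper bound on $|qy|$ comes from Lemma \ref{l:shortcut} applied to the triple $(p,q,y)$, with apex at $p$ and angular defect $\pi-\angle qpy=\beta$. Since $|pq|=r_1$, $|py|\ge r_1$ by \eqref{e:py-ge-r1}, and $\lcr(p)\ge r_1$ by \eqref{e:lcr-bound}, the minimum in the shortcut estimate equals $r_1$, and I obtain
\[
|qy|\;\le\;|pq|+|py|-c\beta^2\min\{|pq|,|py|,\lcr(p)\}\;=\;r_1+|py|-c\beta^2 r_1.
\]
The lower bound on $|qy|$ comes from applying \eqref{e:py-px} at $q\in F$: using $|qx|=|qp|+|px|=r_1+|px|$, one has $|qy|\ge r_1+|px|+a-\de$. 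Chaining these two inequalities yields
\[
c\beta^2 r_1\;\le\;|py|-|px|-a+\de.
\]

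To close the argument I use the fact that $p\in F$ as well: applying the distance-difference bound to the pair $(p,q_0)\in F\times F$ (exactly as in the derivation of \eqref{e:py-px}, but now also with the reverse inequality) gives $\bigl||py|-|px|-a\bigr|\le\de$, so $|py|-|px|-a\le\de$. Substituting back, $c\beta^2 r_1\le 2\de$, which is $\beta\le C_2\de^{1/2}$ for $C_2=\sqrt{2/(cr_1)}$, a constant that depends only on $K$ through the definition of $r_1$ and hence on the allowed parameters $n,K,R,\nu$.

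There is no real obstacle in this proof beyond noticing that $p\in F$; once that is in hand the estimate is just a mechanical combination of the Alexandrov-type shortcut inequality with the two distance-difference bounds that $F$ provides. The only thing I need to be slightly careful about is to require $\de_0$ small enough that all of $\de<r_1$, $|py|\ge r_1$, etc., which has already been arranged in the setup preceding the lemma.
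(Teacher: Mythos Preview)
Your proof is correct and follows essentially the same approach as the paper: observe that both $p$ and $q$ lie in $F$, then combine the shortcut estimate (Lemma~\ref{l:shortcut}) for the triangle $pqy$ with the distance-difference bound to control $\beta$. The only difference is that the paper uses the pair $(q,p)\in F\times F$ directly to obtain $|pq|+|py|-|qy|\le\delta$ in one step, whereas you route through $q_0$ twice (once for $q$ via \eqref{e:py-px}, once for $p$), which costs you a harmless factor of~$2$ in the constant.
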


\begin{proof}
Both $p$ and $q$ belong to the ball $B_1(q_0)$ and hence to~$F$.
Therefore
$$
 D_x(q,p)-D_y(q,p) \le \|\D_F(x)-\D_F(x)\| = \de .
$$
Substituting $D_x(q,p) = |qx|-|px| = |pq|$
and $D_y(q,p)=|qy|-|py|$ yields that
\be\label{e:pqy-shortcut}
  |pq|+|py|-|qy| \le \de .
\ee
On the other hand, by Lemma \ref{l:shortcut},
\be\label{e:pqy-shortcut2}
 |pq|+|py|-|qy| \ge c\beta^2\min\{|pq|,|py|,\lcr(p)\} = c\beta^2 r_1 ,
\ee
where $c>0$ is an absolute constant.
The last identity in \eqref{e:pqy-shortcut2} follows from \eqref{e:lcr-bound}, 
\eqref{e:py-ge-r1}, and the construction of~$p$.
Now \eqref{e:pqy-shortcut} and \eqref{e:pqy-shortcut2} imply that
$\beta \le C_2\de^{1/2}$ where $C_2=(cr_1)^{-1/2}$.
\end{proof}

By \eqref{e:py-ge-r1},
there is a point $x_1\in [py]$ such that
\be\label{e:px1-ge-r1}
 |px_1|  = |px|-\de \ge r_1 .
\ee
By \eqref{e:px-bounds}, we have  $|px_1|\le |px| \le R+1$.
The curvature bounds assumed in Proposition \ref{p:holder} imply that
$\Sec_M\ge-K$ everywhere in the ball $B_{2R+2}(p)$.
Hence, by Lemma \ref{l:lipexp} applied to $p,x,x_1$,
\be\label{e:xx_1}
  |xx_1| \le C_3\beta + \de \le C_4 \de^{1/2}
\ee
where $C_3$ is the constant $C_{K,R+1}$ from Lemma \ref{l:lipexp}
and $C_4=C_3C_2+r_1^{1/2}$. 
The last inequality in \eqref{e:xx_1} follows from Lemma \ref{l:beta}
and the assumption that $\de<\de_0\le r_1$.

We may assume that 
\be\label{e:xy-lower}
|xy|>2C_4\de^{1/2} ,
\ee
otherwise \eqref{e:holder} holds for any $C_0\ge 2C_4$.
By \eqref{e:xx_1} and \eqref{e:xy-lower},
$$
  |x_1y|\ge |xy|-|xx_1| > \frac12 |xy| . 
$$
This inequality, \eqref{e:lcr-bound} and \eqref{e:px1-ge-r1} imply that
$$
  \min\{|px_1|,|x_1y|,\lcr(x_1)\} \ge \frac12 \min\{ |xy|,r_1 \} . 
$$
Now we apply the second part of Lemma \ref{l:short-median} to points $p$, $y$, $x_1$, $x$
and obtain that
\be\label{e:pxy-excess-above}
 |px| + |xy| - |py| \le  \frac{4C |xx_1|^2}{\min\{ |xy|, r_1 \}}
 \le \frac{C_5 \de}{\min\{ |xy|, r_1 \}}
\ee
where the second inequality follows from \eqref{e:xx_1}.
Here $C$ is the absolute constant from Lemma \ref{l:short-median}
and $C_5=4CC_4^2$.

On the other hand, 
recall that $\angle pxy=\angle qxy\le\pi-\alpha_0$ by Lemma \ref{l:alpha0}.
This and Lemma \ref{l:shortcut} imply that
$$
 |px| + |xy| - |py| \ge c\alpha_0^2 \min\{ |px|, |xy|, \lcr(x) \}
 \ge c\alpha_0^2 \min\{ |xy|, r_1 \}
$$
where the second inequality follows from \eqref{e:lcr-bound} and \eqref{e:px-bounds}.
This and \eqref{e:pxy-excess-above} imply
\be\label{e:holder-almost-final}
 \min\{ |xy|, r_1 \}^2 \le C_6 \de
\ee
where $C_6=c^{-1} C_5 \alpha_0^{-2}$.
We require that $\de_0< C_6^{-1}r_1^2$,
then $r_1^2 \ge C_6\de_0 > C_6\de$.
This and \eqref{e:holder-almost-final} imply that $|xy|\le C_6^{1/2}$,
thus \eqref{e:holder} holds for any $C_0\ge C_6^{1/2}$.

Collecting all estimates obtained above, we conclude that \eqref{e:holder}
holds for
$$
  C_0 := \max\{ 2C_4, C_6^{1/2} \}
$$
provided that
$$
 \de < \de_0 := \min \{ r_1, C_6^{-1}r_1^2 \}  .
$$
%
Since all constants in the argument are determined by $n,K,R,\nu$,
this proves Proposition~\ref{p:holder}.
\end{proof}

\subsection{Local bi-Lipschitz continuity}
Proposition~\ref{p:holder} implies that
$\D_F$ is a homeomorphism onto its image.
Thus, in order to prove Theorem \ref{t:inverse}
it suffices to verify that $\D_F$ is locally bi-Lipschitz.
This is established in the following proposition.

\begin{proposition}\label{p:bilip}
For every $n\ge 2$ and $K,R,i_0>0$ there exist $c_0>0$ and $r_0>0$
such that the following holds.
Let $M^n$ be a complete Riemannian manifold, $F\subset M$,
and assume that $F$ contains a unit ball $B_1(q_0)\subsetneq M$,
and that $|\Sec_M|\le K$ and $\inj_M\ge i_0$
everywhere in the ball $B_{3R+3}(q_0)$.

Then for all $x,y\in B_R(q_0)$ such that $|xy|<r_0$,
one has
\be\label{e:bilip}
 \|\D_F(x)-\D_F(y)\| \ge c_0 |xy| .
\ee
\end{proposition}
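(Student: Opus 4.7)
The plan is to find two test points $p_1,p_2\in F$ whose minimizing geodesics from $x$ have initial directions with significantly different inner products with the direction $v$ of $[xy]$, and to which Corollary~\ref{cor:1stvar} applies with a uniform first-variation error. The lower bound on $\|\D_F(x)-\D_F(y)\|$ then follows from estimating $D_x(p_1,p_2)-D_y(p_1,p_2)$.

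First I would find the test directions. Following Lemma~\ref{l:alpha0}, pick $q_1\in B_{2/5}(q_0)$ with $|xq_1|\ge 1/5$ and set $B=B_{1/10}(q_1)\subset B_{1/2}(q_0)\subset F$. The two-sided curvature bound together with $\inj_M\ge i_0$ gives $\vol(B)\ge\nu(n,K,i_0)>0$ by volume comparison (see \cite[Ch.~6, Theorem 27]{Pe}), and Lemma~\ref{l:dirvol} then yields $\Hm^{n-1}(\dir(x,B))\ge c_1=c_1(n,K,R,i_0)$. For $v\in S_xM$ the initial direction of $[xy]$ and $f(u)=\langle u,v\rangle$, every strip $f^{-1}([a,a+\ep])\subset S^{n-1}$ has volume at most $\omega_{n-2}\ep$, so $\dir(x,B)$ cannot fit into a strip of $f$-width less than $c_2:=c_1/\omega_{n-2}$. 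I would then pick $u_1,u_2\in\dir(x,B)$ with $\langle u_1-u_2,v\rangle\ge c_2$ and let $q'_1,q'_2\in B$ be the corresponding endpoints of minimizing geodesics from $x$.

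Next I would set up the application of Corollary~\ref{cor:1stvar} by passing to interior points, as in the proof of Proposition~\ref{p:holder}. Let $r_1=\min\{1/30,K^{-1/2}\}$ and take $p_i\in[xq'_i]$ with $|p_iq'_i|=r_1$; then $p_i\in B_1(q_0)\subset F$, $|p_ix|\ge 1/15$, and the initial direction of $[xp_i]$ at $x$ coincides with $u_i$. Since $p_i$ is an interior point of the minimizing geodesic $[xq'_i]$, Corollary~\ref{cor:1stvar} applies with $p=p_i$, $q=q'_i$, $z=y$, yielding
\be
 \bigl||p_ix|-|p_iy|-|xy|\,\langle u_i,v\rangle\bigr|\le \Lambda\,|xy|^2/r_1
\ee
for a constant $\Lambda=\Lambda(K,R+1,i_0)$.

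Subtracting the estimate for $i=1$ and $i=2$ gives
\be
 D_x(p_1,p_2)-D_y(p_1,p_2)\ge |xy|\,\langle u_1-u_2,v\rangle-2\Lambda|xy|^2/r_1\ge c_2|xy|-C|xy|^2,
\ee
which exceeds $(c_2/2)|xy|$ once $|xy|$ is smaller than $r_0:=c_2 r_1/(4\Lambda)$; this proves the proposition with $c_0=c_2/2$. The only real obstacle I anticipate is bookkeeping: one must verify that the auxiliary balls appearing in the hypotheses of Corollary~\ref{cor:1stvar} (and hence of Proposition~\ref{p:extension}) sit inside $B_{3R+3}(q_0)$, where the curvature and injectivity radius bounds are assumed, and one must harmonize the constants coming from Lemma~\ref{l:dirvol}, the volume comparison, and the coarea estimate. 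Neither step is conceptually hard.
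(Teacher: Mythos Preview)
Your approach is essentially the same as the paper's. The paper packages your direction-finding step as a separate Lemma (``Lemma \ref{l:theta0}''), which produces $q_1,q_2\in B_{1/2}(q_0)$ with $|\cos\angle q_1xy-\cos\angle q_2xy|\ge\theta_0$; it then takes $p_i\in[xq_i]$ with $|q_ip_i|=1/20$ and applies Corollary~\ref{cor:1stvar} exactly as you do. The only cosmetic difference is that the paper uses the center of $B$ as one of the two points and fishes the second out of $\dir(x,B)$, whereas you take both from $\dir(x,B)$.

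One small slip worth noting: the bound $\Hm^{n-1}\bigl(f^{-1}([a,a+\ep])\bigr)\le\omega_{n-2}\ep$ is valid for $n\ge 3$ (since the coarea factor $(1-t^2)^{(n-3)/2}\le 1$) but fails for $n=2$, where a zone near the poles has length $\sim\sqrt\ep$ rather than $\sim\ep$. This does not damage the argument---you still get that the strip volume tends to $0$ uniformly in $a$, hence some $c_2(n,c_1)>0$ exists---but the explicit formula $c_2=c_1/\omega_{n-2}$ needs adjustment in dimension two. The paper sidesteps this by simply noting ``$\Hm^{n-1}(\Sigma_\theta)\to 0$ as $\theta\to 0$ uniformly in~$\alpha$'' without writing down a rate.
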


\begin{proof}
Let $M,F,q_0$ be as in the proposition.
Fix $x,y\in B_R(q_0)$.
We begin with a lemma similar to Lemma \ref{l:alpha0}.

\begin{lemma}\label{l:theta0}
There exists $\theta_0=\theta_0(n,K,R,i_0)>0$ such that the following holds.
For any $M,q_0,x,y$ as above, 
there exist $q_1,q_2\in B_{1/2}(q_0)$ such that $|q_1x|\ge1/10$,
$|q_2x|\ge1/10$, and 
\be\label{e:theta0}
 |\cos\angle q_1xy- \cos\angle q_2xy| \ge \theta_0
\ee
for some choice of minimizing geodesics $[xy]$, $[xq_1]$ and $[xq_2]$
defining the above angles.
\end{lemma}

\begin{proof}
As in the proof of Lemma \ref{l:alpha0}, 
fix $q_1\in M$ such that $|q_0q_1|\le 2/5$ and $|xq_1|\ge 1/5$,
and let $B=B_{1/10}(q_1)$.
The bounds on curvature and injectivity radius imply a lower bound
on the volume of $B$:
$$
  \vol(B) \ge c_1=c_1(n,K,i_0) > 0 .
$$
Similarly to \eqref{e:c_2}, this and Lemma \ref{l:dirvol} imply a lower bound
$$
 \Hm^{n-1}(\dir(x,B)) \ge c_2 = c_2(n,K,R,i_0) > 0 .
$$
Fix minimizing geodesics $[xy]$ and $[xq_1]$,
let $v$ and $u_1$ be their directions at~$x$
and $\alpha=\angle(v,u_1)$.
For $\theta>0$, consider the set
$$
\Sigma_\theta = \{ u\in S_xM : |\cos\angle(u,v)-\cos\alpha|<\theta \}
$$
in the unit sphere $S_xM$.
Observe that $\Hm^{n-1}(\Sigma_\theta)\to 0$ as $\theta\to 0$ uniformly in $\alpha$.
Hence there exists $\theta_0=\theta_0(n,c_2)$ such that
$\dir(x,B)$ contains a vector $u_2\notin\Sigma_{\theta_0}$.
This vector is the initial direction of a minimizing geodesic
$[xq_2]$ where $q_2\in B$.
By the definition of $\Sigma_{\theta_0}$,
\eqref{e:theta0} holds for $q_1$, $q_2$ and the minimizing geodesics
$[xy]$, $[xq_1]$, $[xq_2]$ fixed above.
\end{proof}

Fix $q_1,q_2$ and minimizing geodesics $[xy]$, $[xq_1]$ and $[xq_2]$
constricted in Lemma~\ref{l:theta0}.
For each $i=1,2$,
let $p_i$ be the point on $[xq_i]$ such that $|q_ip_i|=1/20$.
Note that $|p_ix|=|q_ix|-|q_ip_i|\ge 1/20$.
We apply Corollary \ref{cor:1stvar} to $q_i,p_i,x,y$ and obtain
$$
 \bigl| |p_ix|-|p_iy| - |xy| \cos\alpha_i \bigr| \le C_7 |xy|^2 , \qquad i=1,2,
$$
where $\alpha_i=\angle p_ixy=\angle q_ixy$ and $C_7$ is
a constant determined by $K,R,i_0$.
Hence
\begin{multline*}
 |D_x(p_1,p_2)-D_y(p_1,p_2)| 
 = \bigl| (|p_1x|-|p_1y|) - (|p_2x|-|p_2y|) \bigr| \\
 \ge |xy|\cdot|\cos\alpha_1-\cos\alpha_2| - 2C_7 |xy|^2
  \ge \theta_0|xy| - 2C_7 |xy|^2
\end{multline*}
where the last inequality follows from Lemma \ref{l:theta0}.
Since both $p_1$ and $p_2$ belong to to $B_1(q_0)$ and hence to $F$,
this implies
\be\label{e:bilip1}
 \|\D_F(x)-\D_F(y)\| \ge \theta_0|xy| - 2C_7 |xy|^2 .
\ee
If $|xy|<r_0:=\theta_0/4C_7$, then the right-hand side of \eqref{e:bilip1}
is bounded below by $\theta_0|xy|/2$
and hence \eqref{e:bilip} holds for $c_0=\theta_0/2$.
This finishes the proof of Proposition~\ref{p:bilip}.
\end{proof}

\subsection{Proof of Theorem \ref{t:inverse} and Corollary \ref{cor:bilip}}
\label{subsec:proof-inverse}
Now we deduce Theorem \ref{t:inverse} and Corollary \ref{cor:bilip}
from Propositions \ref{p:holder} and \ref{p:bilip}.
As explained in the beginning of this section, we rescale $M$
so that, upon the rescaling, $F$ contains a unit ball $B_1(q_0)$ and $B_1(q_0)\ne M$.
Note that in the case of Corollary \ref{cor:bilip}
one can use the rescaling factor equal to $\min\{\rho_0,i_0,1\}^{-1}$.

Fix $x_0\in M$ and choose geometric bounds $K,R,i_0,\nu$ such that
Propositions \ref{p:holder} and \ref{p:bilip} apply to any $x\in B_1(x_0)$.
Namely pick any $R\ge |q_0x_0|+1$, 
let $K$ be the supremum of $|\Sec_M|$ over the ball $B_{3R+3}(q_0)$,
let $i_0$ be the infimum of $\inj_M$ over the same ball,
and let $\nu$ be the volume of the ball of radius $\min\{i_0,1\}$ in
the $n$-sphere equipped with a metric of constant curvature~$K$.

Proposition \ref{p:bilip} provides 
positive numbers $r_0$ and $c_0$
such that \eqref{e:bilip} holds for all pairs $x,y\in B_1(x_0)$ such that $|xy|<r_0$.
We may assume that $r_0\le 1$.

Proposition \ref{p:holder} implies that there exists $\de_1>0$ such that
$|xy|<r_0$ for all  $x\in B_1(x_0)$ and $y\in M$ such that
$ \|\D_F(x)-\D_F(y)\| < 2\de_1 $.
In particular, if $\|\D_F(x)-\D_F(x_0)\| < \de_1$
and $\|\D_F(y)-\D_F(x_0)\| < \de_1$,
then both $x$ and $y$ belong to 
the ball $B_{r_0}(x_0)\subset B_1(x_0)\subset B_R(q_0)$ and $|xy|<r_0$.
This and \eqref{e:bilip} imply that $\D_F^{-1}$ is $c_0^{-1}$-Lipschitz
within the ball of radius $\de_1$ centered at $\D_F(x_0)$.
This finishes the proof of Theorem \ref{t:inverse}.

To prove Corollary \ref{cor:bilip}, observe that in the case of compact $M$
the parameters above can be chosen independently of $x_0$
and uniformly for all $M\in\mathcal M(n,D,K,i_0)$.
The above argument shows that, if $F$ contains a unit ball $B_1(q_0)\ne M$,
then $\D_F^{-1}$ has a global Lipschitz
constant $\max\{c_0^{-1},\de_1^{-1}\diam(M)\}$,
which is determined by $n,D,K,i_0$.
The initial rescaling adds the dependence on $\rho_0$.

\section{Determination of the metric}
\label{sec:reconstruction}

In this section we prove Theorem \ref{t:reconstruction}.
Let $M_i$, $F$, $U_i$, $\D_F^i$, $i=1,2$, 
be as in Theorem \ref{t:reconstruction}.
We denote the distances in $M_1$ and $M_2$ by $d_1$ and $d_2$.

By Theorem \ref{t:inverse}, $\D_F^i|_{U_i}$ 
is a locally bi-Lipschitz homeomorphism between $U_i$ and its image $\D_F^i(U_i)$.
Hence the map $\phi\co U_1\to U_2$ given by \eqref{e:def-phi}
is well-defined and it is a locally bi-Lipschitz homeomorphism between $U_1$ and~$U_2$.
Our goal is to prove that $\phi$ is a Riemannian isometry.
By the definition of $\phi$ we have
\be\label{e:phi}
  \D_F^1(x) = \D_F^2(\phi(x))
\ee
for all $x\in U_1$.

First we show that the distance difference data determines
the metric of the observation domain $F$.

\begin{lemma}\label{l:g-on-F}
$g_1|_F=g_2|_F$.
\end{lemma}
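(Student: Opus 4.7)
The plan is to use the hypothesis $\D_F^1(x)=\D_F^2(\phi(x))$ for $x\in U_1$ together with the eikonal equation to recover the cometric at an arbitrary point $y_0\in F$.

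Unpacking $\D_F^1(x)=\D_F^2(\phi(x))$ over all test pairs in $F\times F$ says that $y\mapsto d_1(x,y)-d_2(\phi(x),y)$ is constant in $y$ on $F$, so
\[
  d_1(x,y)=d_2(\phi(x),y)+c(x), \qquad x\in U_1,\ y\in F,
\]
for some $c(x)$ depending only on $x$. Fix $y_0\in F$. Since $U_1$ is open (and nonempty; otherwise the hypothesis carries no content) and the $M_1$-cut locus of $y_0$ has measure zero, I would pick $x_0\in U_1$ off that cut locus. By closedness of the cut locus there are open neighborhoods $V\subset U_1$ of $x_0$ and $W\subset F$ of $y_0$ such that for every $x\in V$ the function $d_1(x,\cdot)$ is smooth on $W$; the displayed identity then forces $d_2(\phi(x),\cdot)$ to be smooth on $W$ as well. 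Differentiating the identity in $y$ (for fixed $x\in V$) eliminates $c(x)$ and yields
\[
  d_yd_1(x,y)=d_yd_2(\phi(x),y)\in T_y^*F,\qquad x\in V,\ y\in W,
\]
and by the eikonal identity the common value has $g_1^*$-norm and $g_2^*$-norm both equal to $1$.

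To conclude, I would specialize $y=y_0$ and vary $x$ in $V$. The covector $\xi(x):=d_{y_0}d_1(x,y_0)$ is $g_1$-dual to the negated unit initial direction at $y_0$ of the minimizing geodesic from $y_0$ to $x$; because $\exp_{y_0}$ is a local diffeomorphism at $\exp_{y_0}^{-1}(x_0)$, the map $x\mapsto\xi(x)$ from $V$ to $T_{y_0}^*M$ is a submersion whose image contains an open subset $\Sigma$ of the $g_1^*$-unit sphere. On $\Sigma$ the symmetric bilinear form $h:=g_2^*-g_1^*$ vanishes as a quadratic form; scaling gives $h(\eta,\eta)=0$ for all $\eta$ in the open cone $\R_+\cdot\Sigma$, so the polynomial $\eta\mapsto h(\eta,\eta)$ vanishes identically on $T_{y_0}^*M$, and polarization gives $h\equiv 0$. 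Thus $g_1^*=g_2^*$, equivalently $g_1=g_2$, at $y_0$, and since $y_0\in F$ was arbitrary, $g_1|_F=g_2|_F$. The main obstacle is the submersion step: we need a whole open subset of the unit sphere at $y_0$ to be attained by the covectors $\xi(x)$ as $x$ ranges over $V\subset U_1$ (which may be disjoint from $F$), and this is exactly where choosing $x_0$ off the cut locus and invoking the local diffeomorphism property of $\exp_{y_0}$ becomes essential.
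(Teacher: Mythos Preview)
Your argument is correct and follows the same core idea as the paper: both proofs fix $y_0\in F$, observe that the identity $\D_F^1(x)=\D_F^2(\phi(x))$ forces $d_yd_1(x,\cdot)=d_yd_2(\phi(x),\cdot)$ as covectors at $y_0$, invoke the eikonal equation to see that this common covector has unit conorm with respect to both $g_1$ and $g_2$, and then argue that ``enough'' such covectors are produced as $x$ ranges over $U_1$ to pin down the cometric.

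The difference is in how ``enough'' is established. You choose $x_0\in U_1$ off the cut locus of $y_0$, use that $\exp_{y_0}$ is a local diffeomorphism there to get that $x\mapsto d_{y_0}d_1(x,y_0)$ is a submersion, and hence obtain an \emph{open} subset of the $g_1^*$-unit sphere on which $g_1^*$ and $g_2^*$ agree; vanishing of the quadratic form $g_2^*-g_1^*$ on an open cone then forces it to vanish identically. The paper instead appeals to Lemma~\ref{l:dirvol} to show that the set $\dir_{M_1}(y_0,U_1)$, and hence the corresponding set of covectors, has \emph{positive $(n-1)$-dimensional Hausdorff measure}; since two distinct centered ellipsoids meet in a set of measure zero, the conclusion follows. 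Your route is a bit more elementary for smooth manifolds, avoiding the volume-comparison lemma. The paper's route, by contrast, uses only coarse measure-theoretic information and therefore survives in the low-regularity setting needed later for the stability argument (Proposition~\ref{p:stability}), where the limit spaces are Alexandrov spaces rather than smooth manifolds.
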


\begin{proof}
Fix a point $q_0\in F$.
For $i\in\{1,2\}$ and $x\in U_i$ define a function
$f_x^i\co F\to\R$ by
$$
 f_x^i(y) = \D^i_F(x)(y,q_0) = d_i(x,y) - d_i(x,q_0) , \qquad y\in F.
$$
By \eqref{e:phi} we have $f^1_x=f^2_{\phi(x)}$ for all $x\in U_1$.
Hence the two sets $\{f^1_x\}_{x\in U_1}$ and $\{f^2_x\}_{x\in U_2}$ coincide.

If the function $f_x^i$ is differentiable at $y\in F$,
then $x$ is connected to $y$ by a unique minimizing geodesics of $M_i$,
and the velocity of this geodesic at $y$ 
is the Riemannian
gradient of $f^i_x$ at~$y$ with respect to~$g_i$.
In particular the derivative $d_yf^i_x$ is a co-vector of unit norm with respect to $g_i$.
Thus
\be\label{e:unit-covector}
 \|d_yf^1_x\|_{g_1}= 1=\|d_yf^2_{\phi(x)}\|_{g_2} = \|d_yf^1_x\|_{g_2}
\ee
whenever the function $f^1_x=f^2_{\phi(x)}$ is differentiable at~$y$.

For every $y\in F$ and every vector $v\in\dir_{M_1}(y,F)$
(see Notation \ref{n:dir})
there exists $x\in U_1$
such that $f^1_x$ is differentiable at~$y$ and the $g_1$-gradient
of $f^1_x$ at $y$ equals $-v$.
To construct such $x$, pick $x_0\in U_i$ such that
$v$ is the direction of a minimizing geodesic $[yx_0]$ at $y$,
and let $x$ be any point from $([yx_0]\cap U_i)\setminus\{x_0\}$.
By Lemma \ref{l:dirvol} it follows that the set of co-vectors
$$
 \{ d_yf^1_x : x\in U_1 \text{ and $f^1_x$ is differentiable at $y$} \} \subset T_y^*M_1 
$$
has a positive
$(n-1)$-dimensional Hausdorff measure.
By \eqref{e:unit-covector}, all these co-vectors have unit norms
with respect to both $g_1$ and $g_2$. Thus the Euclidean norms
defined by $g_1$ and $g_2$ on the co-tangent space at $y$
agree on a set of positive measure.
Hence these norms coincide and so do the metric tensors $g_1$ and $g_2$ at~$y$.
Since $y\in F$ is arbitrary, the lemma follows.
\end{proof}

For any subset $F'\subset F$,
the distance difference data $\D^i_F(U_i)$ determines $\D^i_{F'}(U_i)$,
hence it suffices to prove the theorem for $F'$ in place of~$F$.
More precisely, let $F'\subset F$ be a set with a nonempty interior.
Then the distance difference representations $\D_{F'}^i\co M_i\to\C(F'\times F')$ are injective
by Theorem \ref{t:inverse}, $\D_{F'}^1(U_1)=\D_{F'}^2(U_2)$ since
$\D_{F}^1(U_1)=\D_{F}^2(U_2)$, and 
$\D_{F'}^1(x) = \D_{F'}^2(\phi(x))$ for all $x\in U_1$ due to \eqref{e:phi}.
These properties imply that
\be\label{e:phiprime}
 \phi = (\D^2_{F'})^{-1}\circ\D^1_{F'}|_{U_1}
\ee
(compare with \eqref{e:def-phi}),
therefore Theorem \ref{t:reconstruction} follows from a similar statement
with $F'$ in place of~$F$.

In particular, we may replace $F$ by a small geodesic ball of the metric $g_1$.
By Lemma \ref{l:g-on-F}, this ball is a ball of $g_2$ as well.
Since small geodesic balls are convex,
this and Lemma \ref{l:g-on-F} imply that
\be \label{e:diF}
d_1|_{F\times F}=d_2|_{F\times F} .
\ee
In the sequel we assume that $F$ is a convex geodesic ball
in both $M_1$ and $M_2$ and hence \eqref{e:diF} holds.

Our next step is to handle the intersection $F\cap U_i$.

\begin{lemma}\label{l:UF}
$U_1\cap F=U_2\cap F$ and $\phi|_{U_1\cap F}$ is the identity map.
\end{lemma}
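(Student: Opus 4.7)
\medskip

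\noindent\textbf{Proof plan for Lemma \ref{l:UF}.}

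The plan is to show $\phi(x)=x$ for every $x\in U_1\cap F$; from this both claims follow (with the inclusion $U_2\cap F\subset U_1\cap F$ obtained by running the same argument with $\phi^{-1}=(\D_F^1)^{-1}\circ\D_F^2|_{U_2}$ in place of $\phi$).

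So let $x\in U_1\cap F$ and set $x'=\phi(x)\in U_2$. The defining equation \eqref{e:phi} says $\D_F^1(x)=\D_F^2(x')$, i.e.
$$
 d_1(x,y)-d_1(x,z)=d_2(x',y)-d_2(x',z) \qquad\text{for all }y,z\in F.
$$
Since $x\in F$, I may substitute $z=x$ and use \eqref{e:diF} on the left-hand side to rewrite this as
$$
 d_2(x',y)=d_2(x,y)+d_2(x',x) \qquad\text{for all }y\in F.
$$
This identity saturates the triangle inequality $d_2(x',y)\le d_2(x',x)+d_2(x,y)$ for every $y\in F$.

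The key step is to deduce $x'=x$ from this. Because $F$ is an open neighborhood of $x$ in $M_2$, there is an $\ep>0$ such that for every unit vector $v\in S_xM_2$ the point $y=\exp_x^{M_2}(tv)$ lies in $F$ for $0<t<\ep$, and the geodesic segment $t\mapsto\exp_x^{M_2}(tv)$ is minimizing, so $d_2(x,y)=t$. The saturated triangle inequality $d_2(x',y)=d_2(x',x)+t$ then forces the concatenation of a minimizing geodesic $[x'x]$ with this segment to be a minimizing (hence smooth) geodesic from $x'$ to $y$. If $x'\ne x$, such a smooth continuation is uniquely determined by the incoming direction of $[x'x]$ at $x$, contradicting the fact that $v\in S_xM_2$ was arbitrary (the manifold has dimension $n\ge 2$, so there is more than one direction available). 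Therefore $x'=x$, which means $x\in U_2\cap F$ and $\phi(x)=x$.

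The only place where anything delicate happens is this last paragraph, and it is genuinely easy once one notes that $F$ being open gives minimizing radial segments from $x$ in every direction; the rest of the argument is a direct consequence of the information already extracted in Lemma \ref{l:g-on-F} and \eqref{e:diF}. No comparison geometry or extension results from the earlier sections are needed here.
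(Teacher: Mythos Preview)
Your argument is essentially the paper's, with one minor wrinkle. The paper also fixes $x\in U_1\cap F$, sets $\tilde x=\phi(x)$, and observes (without invoking \eqref{e:diF}) that the function $f(y)=\D_F^1(x)(y,q)=d_2(\tilde x,y)-d_2(\tilde x,q)$ attains its minimum over $F$ at $y=x$; your identity $d_2(x',y)=d_2(x',x)+d_2(x,y)$ is an equivalent statement obtained via \eqref{e:diF}. The paper then simply picks $y$ on a minimizing $M_2$-geodesic from $x$ toward $\tilde x$, with $y\in F\setminus\{x\}$, to get $d_2(\tilde x,y)<d_2(\tilde x,x)$ and hence $f(y)<f(x)$, a contradiction.

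Your final step, by contrast, argues that every $v\in S_xM_2$ must be the outgoing direction of some minimizing geodesic $[x'x]$, and that ``the continuation is uniquely determined by the incoming direction''. That sentence is literally true for a \emph{fixed} choice of $[x'x]$, but there may be many minimizing geodesics from $x'$ to $x$ (think of antipodal points on a sphere), so the set of admissible continuation directions need not be a singleton, and $n\ge 2$ alone does not yet give a contradiction. The fix is immediate and is exactly the paper's move: take $v$ pointing from $x$ toward $x'$ along some minimizing geodesic; then for small $t>0$ the point $y=\exp_x^{M_2}(tv)$ lies on that geodesic, so $d_2(x',y)=d_2(x',x)-t$, contradicting your identity $d_2(x',y)=d_2(x',x)+t$. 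With this adjustment your proof is complete and coincides with the paper's.
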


\begin{proof}
Fix $q\in F$ and let $x\in U_1\cap F$.
Consider a function $f\co F\to\R$ defined by
$$
 f(y) = \D_F^1(x)(y,q) = d_1(x,y)-d_1(x,q)
$$
and observe that it attains its minimum at~$x$.
Let $\tilde x=\phi(x)$.
By \eqref{e:phi} we have
\be\label{e:UF1}
 f(y) = \D_F^2(\phi(x))(y,q) = d_2(\tilde x,y)-d_2(\tilde x,q)
\ee
for all $y\in F$.
Suppose that $\tilde x\ne x$. Consider a minimizing geodesic $\ga$ of $M_2$
connecting $x$ and $\tilde x$ and pick a point $y\in\ga\cap F\setminus\{x\}$.
For this point $y$ we have $d_2(\tilde x,y)<d_2(\tilde x,x)$ and
hence, by \eqref{e:UF1}, $f(y)<f(x)$.
This contradicts the fact that $f(x)$ is the minimum of~$f$,
therefore $\tilde x=x$.

Thus $\phi|_{U_1\cap F}$ is the identity, in particular $U_1\cap F\subset U_2$.
Switching the roles of $M_1$ and $M_2$ shows that $U_1\cap F=U_2\cap F$.
\end{proof}

Lemma \ref{l:UF} allows us to assume that $F\cap U_i=\emptyset$
for each $i\in\{1,2\}$.
Indeed, by Lemma \ref{l:UF} and Lemma \ref{l:g-on-F} the restriction $\phi|_{U_1\cap F}$ is
a Riemannian isometry, hence it suffices to prove that the restriction of
$\phi$ to any open set containing $U_1\setminus F$ is an isometry as well.
Recall that $F$ is a geodesic ball (with respect to both $g_1$ and~$g_2$),
i.e.\ $F=B_{3\rho}(q_0)$ for some $q_0\in F$ and $\rho>0$.
Thus we may replace each $U_i$ by $U_i'=U_i\setminus \overline B_{2\rho}(q_0)$
and prove the theorem for $U_i'$ in place of $U_i$, $i=1,2$.
Then replacing $F$ by $F'=B_{\rho}(q_0)$ does not change the map $\phi$, see \eqref{e:phiprime},
hence it suffices to prove the theorem for $F'$ and $U_i'$ in place of $F$ and~$U_i$.
We reuse the notation $F$ and $U_i$ for $F'$ and $U_i'$ and thus
assume that $F\cap U_i=\emptyset$.

The next lemma tells that $\phi$ maps certain unparametrized geodesics of $U_1$
to unparametrized geodesics of $U_2$.
This is similar to \cite[Lemma 2.9]{LS}.
We give a different proof since the argument in \cite{LS}
uses more regularity assumptions than those guaranteed by curvature bounds.

\begin{lemma}\label{l:dirphi}
Let $x\in U_1$ and let $\ga$ be a minimizing geodesic of $M_1$
connecting $x$ to $q\in F$.
Then there exists a minimizing
geodesic $\tilde\ga$ of $M_2$ connecting $\phi(x)$ to~$q$
and such that $\phi(\ga\cap U_1)=\tilde\ga\cap U_2$ and $\ga\cap F=\tilde\ga\cap F$.
\end{lemma}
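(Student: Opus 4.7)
The plan is to transfer minimizing-geodesic structure from $M_1$ to $M_2$ by exploiting the identity~\eqref{e:phi} together with $d_1|_{F\times F}=d_2|_{F\times F}$ (from Lemma~\ref{l:g-on-F} and convexity of~$F$). First, parametrize $\ga$ by arclength as $\ga\co[0,L]\to M_1$ with $\ga(0)=x$, $\ga(L)=q$, and note that since $F$ is a convex ball in both metrics with $x\notin F$ and $q\in F$, there exists $a\in[0,L)$ with $\ga((a,L])\subset F$ and $\ga(a)\in\pd F$. For any $p=\ga(t)$ with $t\in(a,L]$, the minimality of $\ga$ in $M_1$ gives $D_x^1(p,q)=-d_1(p,q)=-d_2(p,q)$, so~\eqref{e:phi} yields
\[
d_2(\phi(x),p)+d_2(p,q)=d_2(\phi(x),q).
\]
Thus $p$ lies on some minimizing $M_2$-geodesic from $\phi(x)$ to~$q$. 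Since $F$ is a convex $M_2$-ball and $g_1|_F=g_2|_F$, the sub-segment of any such geodesic from $p$ to $q$ stays in $F$ and coincides there with $\ga|_{[t,L]}$.

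Consequently, every minimizing $M_2$-geodesic from $\phi(x)$ to $q$ that passes through some $p\in\ga\cap F$ must in fact contain all of $\ga|_{[a,L]}$, entering $F$ at $\ga(a)$ with the direction $\dot\ga(a)$. By uniqueness of geodesic extensions in $M_2$, there is a single such curve $\tilde\ga$: it is the $M_2$-geodesic obtained by extending $\ga|_{[a,L]}$ backward from $\ga(a)$ until it reaches $\phi(x)$. By construction $\tilde\ga$ is a minimizing $M_2$-geodesic from $\phi(x)$ to $q$ with $\tilde\ga\cap F=\ga\cap F$. I denote by $\hat\ga\supset\tilde\ga$ the full backward extension.

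To show $\phi(\ga\cap U_1)=\tilde\ga\cap U_2$, I would apply the same construction to every $x'\in\ga\cap U_1$: since the minimizing $M_1$-geodesic from $x'$ to $q$ is a sub-segment of $\ga$, one obtains a minimizing $M_2$-geodesic from $\phi(x')$ to $q$ containing $\ga|_{[a,L]}$, hence contained in~$\hat\ga$; so $\phi(x')\in\hat\ga$. To pin $\phi(x')$ down to the segment $\tilde\ga$ between $\phi(x)$ and $q$ rather than the complementary part of $\hat\ga$, I would argue by continuity: as $x'\to\ga(a)$ along $\ga\cap U_1$, one has $\D_F^1(x')\to\D_F^1(\ga(a))=\D_F^2(\ga(a))$, so by invertibility of $\D_F^2$ (Theorem~\ref{t:inverse}) $\phi(x')\to\ga(a)$, which fixes the orientation on $\hat\ga$ and places $\phi(x')$ on the $\tilde\ga$ side of $\ga(a)$. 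The reverse inclusion $\tilde\ga\cap U_2\subset\phi(\ga\cap U_1)$ follows by the symmetric argument applied to $\phi^{-1}$, with $M_1$ and $M_2$ interchanged and $\tilde\ga$ in place of $\ga$. The main obstacle I anticipate is precisely this last identification step, especially if $\ga\cap U_1$ is disconnected: ruling out that some component maps to the ``wrong'' part of $\hat\ga$ requires combining the local bi-Lipschitz continuity of $\phi^{-1}$ from Theorem~\ref{t:inverse} with a connectedness argument that propagates the correct orientation across components.
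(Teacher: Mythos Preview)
Your construction of $\tilde\ga$ is correct and essentially matches the paper's. The gap is exactly where you flagged it: pinning $\phi(y)$ to $\tilde\ga$ rather than to the part of $\hat\ga$ beyond $\phi(x)$. The continuity argument cannot be carried out in the setting of the lemma. Before Lemma~\ref{l:dirphi} is stated, the paper has already reduced to $F=B_\rho(q_0)$ and $U_i\subset M_i\setminus\overline{B_{2\rho}(q_0)}$, so $U_1$ and $\overline F$ are at distance at least~$\rho$. Hence $\ga\cap U_1$ never comes within $\rho$ of $\ga(a)\in\pd F$, and there is no sequence $x'\to\ga(a)$ along $\ga\cap U_1$; the argument already fails on the connected component of $\ga\cap U_1$ containing~$x$. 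For the same reason there is no mechanism to ``propagate orientation'' across components: adjacent components of $\ga\cap U_1$ may be separated by arcs on which $\phi$ is undefined, so local bi-Lipschitz continuity of $\phi$ gives you no bridge.

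The paper closes this gap with a direct, pointwise argument that uses no connectedness. Fix $y\in\ga\cap U_1$, let $\tilde y=\phi(y)$, and suppose $\tilde y$ lies on the extension of $\tilde\ga$ beyond $\tilde x=\phi(x)$, so that $d_2(\tilde y,p)=d_2(\tilde y,\tilde x)+d_2(\tilde x,p)$ for a fixed $p\in\ga\cap F\setminus\{q\}$. Since $\dim M_1\ge 2$, there exists $z\in F$ satisfying a strict triangle inequality $d_1(x,z)<d_1(x,y)+d_1(y,z)$. Subtracting the identity $d_1(x,p)=d_1(x,y)+d_1(y,p)$ (valid because $y$ lies between $x$ and $p$ on~$\ga$) gives $\D_F^1(x)(z,p)<\D_F^1(y)(z,p)$, hence $\D_F^2(\tilde x)(z,p)<\D_F^2(\tilde y)(z,p)$ by~\eqref{e:phi}. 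But combining $d_2(\tilde y,p)=d_2(\tilde y,\tilde x)+d_2(\tilde x,p)$ with the ordinary triangle inequality $d_2(\tilde y,z)\le d_2(\tilde y,\tilde x)+d_2(\tilde x,z)$ yields $\D_F^2(\tilde y)(z,p)\le\D_F^2(\tilde x)(z,p)$, a contradiction. This disposes of every $y\in\ga\cap U_1$ individually, regardless of how $\ga\cap U_1$ sits inside~$\ga$.
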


\begin{proof}
Let $\tilde x=\phi(x)$.
Since $g_1|_F=g_2|_F$ and $F$ is convex in both $M_1$ and $M_2$,
$\ga\cap F$ is a minimizing geodesic arc in $M_2$ as well as in $M_1$.
First we show that a suitable extension of this arc in $M_2$
is a minimizing geodesic connecting $\tilde x$ to~$q$.

Fix $p\in\ga\cap F\setminus\{q\}$. Since $\ga$ is a minimizing geodesic of $M_1$, we have
$$
 \D^1_F(x)(q,p) = d_1(x,q)-d_1(x,p) = d_1(q,p) .
$$
This, \eqref{e:phi}, and \eqref{e:diF} imply that
$\D^2_F(\tilde x)(q,p) = d_2(q,p)$, or, equivalently
$$
 d_2(q,\tilde x)=d_2(q,p)+d_2(p,\tilde x) .
$$
This implies that there is a minimizing geodesic of $M_2$ connecting $\tilde x$ to $q$
and containing~$p$.
Denote this geodesic by $\tilde\ga$.
By the convexity of $F$ we have $\tilde\ga\cap F=\ga\cap F$.
It remains to prove that $\phi(\ga\cap U_1)=\tilde\ga\cap U_2$.

Let $y\in\ga\cap U_1$ and $\tilde y=\phi(y)$. The same argument applied to $y$ in place of $x$
shows that $\tilde y$ belongs to either $\tilde\ga$ or to a minimizing extension
of $\tilde\ga$ beyond $\tilde x$.
Let us show that $\tilde y$ belongs to $\tilde\ga$ rather than an extension.

Since $\dim M_1\ge 2$, there is a point $z\in F$ 
satisfying a strict triangle inequality
$$
d_1(x,z)< d_1(x,y)+d_1(y,z) .
$$
Subtracting the identity $d_1(x,p)=d_1(x,y)+d_1(y,p)$ yields that
$$
 \D_F^1(x)(z,p) < \D_F^1(y)(z,p)
$$
and hence, by \eqref{e:phi},
\be\label{e:dirphi1}
 \D_F^2(\tilde x)(z,p) < \D_F^2(\tilde y)(z,p)
\ee
On the other hand, if $\tilde y$ belongs to an extension of $\tilde\ga$ beyond $\tilde x$,
then 
$$
d_2(\tilde y,p)=d_2(\tilde y,\tilde x)+d_2(\tilde x,p) .
$$
Subtracting this from the triangle inequality
$d_2(\tilde y,z)\le d_2(\tilde y,\tilde x)+d_2(\tilde x,z)$
yields that
$
\D^2_F(\tilde y)(z,p)\le \D^2_F(\tilde x)(z,p)
$,
contrary to \eqref{e:dirphi1}.
Therefore $\tilde y$ belongs to $\tilde\ga$.

This proves that $\phi(\ga\cap U_1)\subset\tilde\ga$.
Switching the roles of $M_1$ and $M_2$ yields that 
$\phi^{-1}(\tilde\ga\cap U_2)\subset\ga$.
Hence $\phi(\ga\cap U_1)=\tilde\ga\cap U_2$
and the lemma follows.
\end{proof}

Now we begin the main part of the proof of Theorem \ref{t:reconstruction}.
Fix $x\in U_1$ and pick two distinct minimizing $M_1$-geodesics $\ga_1,\ga_2$
from $x$ to points $q_1,q_2\in F$, respectively.
Let $\alpha\in(0,\pi]$ be the angle between $\ga_1$ and $\ga_2$ at~$x$.
For each $i\in\{1,2\}$, fix a point $p_i\in\ga_i\cap F\setminus\{q_i\}$.
Let $y$ be a point on $\ga_2$ close to~$x$.
By Corollary \ref{cor:1stvar} applied to $q_1,p_1,x,y$
we have
$$
 d_1(y,p_1)=d_1(x,p_1) - d_1(x,y)\cos\alpha + O(d_1(x,y)^2), \qquad y\in\ga_2,\ y\to x.
$$
Subtracting the identity $d_1(y,p_2)=d_1(x,p_2) - d_1(x,y)$ we obtain
$$
 \D_F^1(y)(p_1,p_2) = \D_F^1(x)(p_1,p_2) +(1-\cos\alpha) d_1(x,y) + O(d_1(x,y)^2), \quad y\in\ga_2,\ y\to x.
$$
Hence there exists a limit
\be\label{e:1-cos}
 \lim_{y\in\ga_2,y\to x} \frac{\D_F^1(y)(p_1,p_2)-\D_F^1(x)(p_1,p_2)}{d_1(x,y)} = 1 -\cos\alpha .
\ee

Let $\tilde x=\phi(x)$. Let $\tilde\ga_1$ and $\tilde\ga_2$ be minimizing $M_2$ geodesics
corresponding to $\ga_1$ and $\ga_2$ as in Lemma \ref{l:dirphi}.
In particular we have $q_i,p_i\in\tilde\ga_i$ for $i=1,2$.
Let $\tilde\alpha$ be the angle between $\tilde\ga_1$ and $\tilde\ga_2$ at $\tilde x$.
Similarly to \eqref{e:1-cos}, for points $\tilde y\in\tilde\ga_2$ close to $\tilde x$ we have
\be\label{e:1-costilde}
 \lim_{\tilde y\in\tilde\ga_2,\tilde y\to\tilde x} 
 \frac{\D_F^2(\tilde y)(p_1,p_2)-\D_F^2(\tilde x)(p_1,p_2)}{d_2(\tilde x,\tilde y)} = 1 -\cos\tilde\alpha .
\ee
If $y\in\ga_2\cap U_1$ and $\tilde y=\phi(y)$, then
$\tilde y\in\tilde\ga_2\cap U_2$ by Lemma \ref{l:dirphi}, and
$$
\D_F^1(y)(p_1,p_2)-\D_F^1(x)(p_1,p_2) = \D_F^2(\tilde y)(p_1,p_2)-\D_F^2(\tilde x)(p_1,p_2)
$$
by \eqref{e:phi}. This, \eqref{e:1-cos} and \eqref{e:1-costilde} imply that
\be\label{e:cosfrac}
\lim_{y\in\ga_2,y\to x} \frac{d_2(\tilde x,\phi(y))}{d_1(x,y)} = \frac{1 -\cos\tilde\alpha}{1 -\cos\alpha}
\ee
By switching the roles of $\ga_1$ and $\ga_2$, the same relation holds for $y\in\ga_1$, $y\to x$.
Hence
\be\label{e:limeq}
 \lim_{y\in\ga_1,y\to x} \frac{d_2(\tilde x,\phi(y))}{d_1(x,y)} 
 = \lim_{y\in\ga_2,y\to x} \frac{d_2(\tilde x,\phi(y))}{d_1(x,y)} .
\ee
Since $\ga_1$ and $\ga_2$ are arbitrary minimizing geodesics from $x$ to points of~$F$,
the limits in \eqref{e:limeq} do not depend on $\ga_1$ and $\ga_2$.
Thus there exists $\la=\la(x)>0$ such that,
for any minimizing $M_1$-geodesic $\ga$ from $x$ to a point of $F$,
\be\label{e:conformal}
 \lim_{y\in\ga,y\to x} \frac{d_2(\tilde x,\phi(y))}{d_1(x,y)} = \la .
\ee

We now show that $\la=1$.
Lemma \ref{l:dirphi} allows us to a define map
$$
 \psi\co \dir_{M_1}(x,F) \to \dir_{M_2}(\tilde x,F)
$$
(see  Notation \ref{n:dir})
as follows: if $v\in\dir_1(x,F)$ is the initial direction
of a minimizing $M_1$-geodesic $\ga$ connecting $x$ to $q\in F$,
then $\psi(v)$ is the initial direction of
the $M_2$-geodesic $\tilde\ga$ corresponding to $\ga$ as in Lemma \ref{l:dirphi}.
By \eqref{e:cosfrac} and \eqref{e:conformal} we have
\be\label{e:anglefrac}
1 -\cos\angle_{g_2}(\psi(v_1),\psi(v_2)) = \la (1 -\cos\angle_{g_1}(v_1,v_2))
\ee
for all $v_1,v_2\in\dir_{M_1}(x,F)$,
where $\angle_{g_i}$ denotes the angle with respect to $g_i$.
Observe that
$$
 |v_1-v_2|^2 = 2(1 -\cos\angle(v_1,v_2))
$$
for any two unit vectors $v_1,v_2$ in a Euclidean space.
Hence \eqref{e:anglefrac} can be rewritten as
$$
 \|\psi(v_1)-\psi(v_2)\|^2_{g_2} = \la \|v_1-v_2\|^2_{g_1}, \qquad \forall v_1,v_2\in \dir_1(x,F).
$$
Thus $\psi$ it a homothetic map that multiplies all distances by a constant factor~$\sqrt\la$.
Hence $\psi$ can be extended to an affine map $\Psi\co T_xM_1\to T_{\tilde x}M_2$
with the same property:
$$
 \|\Psi(v_1)-\Psi(v_2)\|_{g_2} = \sqrt\la \|v_1-v_2\|_{g_1},\qquad \forall v_1,v_2\in T_xM_1.
$$
The set $\dir_{M_2}(\tilde x,F)$ is contained in the intersection
of the unit sphere $S_{\tilde x}M_2$ and the image $\Psi(S_xM_1)$.
The latter is a Euclidean sphere of radius $\sqrt\la$.
Suppose that $\la\ne 1$, then the intersection
of the two spheres is a submanifold of dimension at most $n-2$.
On the other hand, by Lemma \ref{l:dirvol} the set $\dir_{M_2}(\tilde x,F)$
have a positive $(n-1)$-dimensional measure, a contradiction.
Thus $\la=1$.

Since $\la=1$, \eqref{e:anglefrac} implies that $\psi$ preserves angles:
\be\label{e:psi-preserves-angles}
 \angle_{g_2}(\psi(v_1),\psi(v_2)) = \angle_{g_1}(v_1,v_2)
\ee
for all $v_1,v_2\in\dir_{M_1}(x,F)$.

Recall that $\phi$ is a locally bi-Lipschitz map.
Hence, by Rademacher's theorem, it is differentiable almost everywhere.
Assume that $\phi$ is differentiable at the point $x$ that we are considering.
Then the definition of $\psi$, \eqref{e:conformal} and the fact that $\la=1$
imply that $d_x\phi(v)=\psi(v)$ for all $v\in\dir_{M_1}(x,F)$.

By Lemma \ref{l:dirvol}, $\dir_{M_1}(x,F)$ is a set positive measure in $S_xM_1$.
Hence it contains $n$ linearly independent vectors $v_1,\dots,v_n$.
For each $i=1,\dots,n$, we have 
$$
d_x\phi(v_i)=\psi(v_i) \in S_{\tilde x}M_2 .
$$
This and \eqref{e:psi-preserves-angles} imply that
$$
 g_2( d_x\phi(v_i),d_x\phi(v_j) ) = g_1(v_i,v_j) .
$$
Therefore $d_x\phi$ is a linear isometry between $T_xM_1$ and $T_{\tilde x}M_2$
equipped with their Euclidean structures determined by $g_1$ and $g_2$.

Thus the derivative $d_x\phi$ is a linear isometry for almost all $x\in U_1$.
Hence $\phi$ is a locally 1-Lipschitz map with respect to $d_1$ and $d_2$.
The same argument applies to $\phi^{-1}$, 
therefore $\phi$ is a locally distance preserving map.
By the Myers-Steenrod theorem (\cite{MS}, see also \cite[Ch.~5, Theorem 18]{Pe})
this implies that $\phi$ is a Riemannian isometry.
This finishes the proof of Theorem \ref{t:reconstruction}.

\subsection{Stability}
Here we show that the determination of a manifold by distance difference data
is stable in a suitable sense.
We add a number of simplifying assumptions to the setting of Theorem \ref{t:reconstruction}.
First, the unknown manifold $M$ is assumed compact, and we assume a priory bounds
on its diameter, sectional curvature, and injectivity radius.
That is, the manifold belongs to the class $\mathcal M(n,D,K,i_0)$
defined in the introduction.
Second, we a dealing with determination of the whole manifold $M$
(rather than a subset $U$) by the full distance difference data $\D_F(M)$.
Third, we assume that the metric of the observation domain $F$ is known.

To bring a ``practical'' flavour to our set-up,
imagine that we know the distance difference data $\D_F(M)$
with some error. More precisely, suppose that we are given
approximate distance difference data $\{\widetilde\D_F(x_i)\}$,
where $\{x_i\}$ is $\delta$-net in $M$ and $\widetilde\D_F(x_i)$
is the distance difference function $\D_F(x_i)$
measured with an absolute error~$\delta$.
This implies that the set $\{\widetilde\D_F(x_i)\}$ is a $3\de$-approximation
of the distance difference data $\D_F(M)$ in the sense of Hausdorff
distance between subsets of $\C(F\times F)$.
Our goal is to show that these approximate data determine
the manifold up to an error $\ep(\delta)$ in the Gromov--Hasdorff sense,
where $\ep(\de)\to 0$ as $\de\to 0$.
Explicit bounds on $\ep(\de)$ and an actual reconstruction procedure
is outside the scope of this paper.
The precise statement of our stability result is the following.

\begin{proposition}\label{p:stability}
For every $\ep>0$, $n\ge 2$ and $D,K,i_0,\rho_0>0$ there exists $\de>0$ such that the following holds.
Let $M_1=(M_1^n,g_1)$ and $M_2=(M_2^n,g_2)$ belong to $\mathcal M(n,D,K,i_0)$.
Assume that $M_1$ and $M_2$  share a nonempty subset $F$ which 
is open in both manifolds,
they induce the same topology and the same differential structure on $F$,
and $g_1|_F=g_2|_F$.
In addition, assume that $F$ contains a geodesic ball of radius $\rho_0$.

For $i=1,2$, let $\D_F^i$ denote the distance difference representation
of $M_i$ 
and suppose that
$$
 d_H(\D_F^1(M_1) , \D_F^2(M_2)) < \de
$$
where $d_H$ is the Hausdorff distance in $\C(F\times F)$.
Then $d_{GH}(M_1,M_2)<\ep$ where $d_{GH}$ denotes the Gromov-Hausdorff distance.
\end{proposition}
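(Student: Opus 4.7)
The plan is to argue by compactness and contradiction, reducing to Theorem \ref{t:reconstruction}. Suppose the statement fails: there exist $\ep>0$ and sequences $(M_1^{(k)}, M_2^{(k)})$ in $\mathcal M(n,D,K,i_0)^2$ sharing open sets $F^{(k)}$ (each containing a ball of radius $\rho_0$), with $g_1^{(k)}|_{F^{(k)}}=g_2^{(k)}|_{F^{(k)}}$ and $d_H(\D_{F^{(k)}}^1(M_1^{(k)}),\D_{F^{(k)}}^2(M_2^{(k)}))\to 0$, yet $d_{GH}(M_1^{(k)},M_2^{(k)})\ge\ep$. By the Cheeger--Gromov convergence theorem, $\mathcal M(n,D,K,i_0)$ is precompact in the $C^{1,\alpha}$ topology, so after passing to subsequences we may assume $M_i^{(k)}\to M_i^\infty$ in $C^{1,\alpha}$ for $i=1,2$, the limit spaces being $C^{1,\alpha}$ Riemannian manifolds with two-sided curvature bounds in the Alexandrov sense (to which Theorem \ref{t:reconstruction} extends, as noted in the introduction). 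Choose centers $q_0^{(k)}$ of $\rho_0$-balls inside $F^{(k)}$; passing to a further subsequence, $q_0^{(k)}\to q_0^{(i,\infty)}\in M_i^\infty$.

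Next, identify a common observation domain in the limit. Since the $\rho_0$-balls $B_{\rho_0}(q_0^{(k)})$ in $M_1^{(k)}$ and $M_2^{(k)}$ are isometric for every $k$, their limits $B_{\rho_0}(q_0^{(1,\infty)})\subset M_1^\infty$ and $B_{\rho_0}(q_0^{(2,\infty)})\subset M_2^\infty$ are isometric as well; identifying them via this limit isometry produces a common open subset $F^\infty$, on which $g_1^\infty=g_2^\infty$. Fix $\rho\in(0,\rho_0)$ and let $F_\rho^\infty=B_\rho(q_0^{(1,\infty)})\subset F^\infty$. The $C^{1,\alpha}$ convergence together with uniform curvature bounds yields uniform convergence of distance functions on compact sets, so if $x^{(k)}\in M_i^{(k)}$ converges to $x^\infty\in M_i^\infty$ then $\D_{F_\rho^\infty}^i(x^{(k)})\to\D_{F_\rho^\infty}^i(x^\infty)$ uniformly on $F_\rho^\infty\times F_\rho^\infty$. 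Combined with the uniform bi-Lipschitz bounds from Corollary \ref{cor:bilip} (which make each $\D_{F_\rho^\infty}^i(M_i^{(k)})$ a compact, equi-bi-Lipschitz image of $M_i^{(k)}$), the hypothesis $d_H\to 0$ passes to equality in the limit: $\D_{F_\rho^\infty}^1(M_1^\infty)=\D_{F_\rho^\infty}^2(M_2^\infty)$ in $\C(F_\rho^\infty\times F_\rho^\infty)$.

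Applying Theorem \ref{t:reconstruction} with $U_i=M_i^\infty$ and observation domain $F_\rho^\infty$ yields a Riemannian isometry $M_1^\infty\to M_2^\infty$, so $d_{GH}(M_1^\infty,M_2^\infty)=0$; combined with $M_i^{(k)}\to M_i^\infty$ this contradicts $d_{GH}(M_1^{(k)},M_2^{(k)})\ge\ep$. The main obstacle is the passage to the limit of the distance difference data across varying spaces $\C(F^{(k)}\times F^{(k)})$: one must pick diffeomorphisms $F^{(k)}\to F^\infty$ (converging to isometries) along which to pull back distance functions, then use the uniform bi-Lipschitz equivalence of $\D_{F_\rho^\infty}^i$ provided by Corollary \ref{cor:bilip} to control tails (points of $M_i^{(k)}$ far from $F^{(k)}$ still have distance difference functions differing by a controlled amount from each other across the two manifolds). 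With this uniform control, the straightforward diagonalization completes the argument.
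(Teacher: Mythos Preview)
Your proposal is correct and follows essentially the same approach as the paper's own proof sketch: argue by contradiction, extract convergent subsequences via Cheeger--Gromov precompactness of $\mathcal M(n,D,K,i_0)$, identify a common limit observation domain, pass the Hausdorff closeness of the distance difference data to equality in the limit, and then apply (the Alexandrov-space extension of) Theorem~\ref{t:reconstruction} to obtain an isometry of the limit spaces, contradicting $d_{GH}\ge\ep$. Your discussion of the ``main obstacle'' (comparing data across varying spaces $\C(F^{(k)}\times F^{(k)})$ via convergence diffeomorphisms) is in fact more explicit than the paper's treatment, which simply asserts the passage to the limit.
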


\begin{proof}[Proof sketch]
It suffices to consider the case when $F$ is a geodesic ball of radius $\rho_0$.
We may further assume that $\rho_0<\min\{i_0/2,K^{-1/2}\}$.
Then every ball of radius $\rho_0$ is convex.
In particular, $M_1$ and $M_2$ induce the same distances in~$F$.

Suppose that the statement of the proposition is false.
Then there exist a sequence $\de_k\to 0$ and sequences
$\{M_{1,k}\}$, $\{M_{2,k}\}$, $k=1,2,\dots$,
of Riemannian manifolds from $\mathcal M(n,D,K,i_0)$ and geodesic balls
$F_k=B_{\rho_0}(q_k)\subset M_{1,k}\cap M_{2,k}$ satisfying
the assumptions of the proposition for $\de=\de_k$
and such that the Gromov-Hausdorff distance $d_{GH}(M_{1,k},M_{2,k})$
is bounded away from~0.

Recall that the class $\mathcal M(n,D,K,i_0)$ is pre-compact in Gromov-Hausdorff topology,
see \cite[Ch.~10]{Pe}.
Hence, by passing to a subsequence if necessary, we may assume that
metric pairs $(M_{1,k},F_k)$ and $(M_{2,k},F_k)$ converge to 
some pairs $(\bar M_1,\bar F)$ and $(\bar M_2,\bar F)$
in the Gromov-Hausdorff sense,
where $\bar M_1$  and $\bar M_2$ are compact metric spaces
and $\bar F$ is a ball of radius $\rho_0$ in both $\bar M_1$ and $\bar M_2$.
Furthermore $\bar M_1$ and $\bar M_2$ induce the same metric on~$\bar F$.

Let $\D_{\bar F}^1$ and $\D_{\bar F}^2$ be distance difference representations
of $\bar M_1$ and $\bar M_2$ in $\C(\bar F\times\bar F)$.
The distance functions of points of $\bar M_1$ and $\bar M_2$ are in a natural sense
limits of distance functions in $M_{1,k}$ and $M_{2,k}$.
Since $d_H(\D_{F_k}^1(M_{1,k}) , \D_{F_k}^2(M_{2,k})) < \de_k\to 0$,
it follows that $\D_{\bar F}^1(M_1)=\D_{\bar F}^2(M_2)$.

Since the convergent spaces $M_{1,k}$ and $M_{2,k}$ are from $\mathcal M(n,D,K,i_0)$,
the limit spaces $\bar M_1$  and $\bar M_2$ are $n$-dimensional Alexandrov spaces
with two-sided curvature bounds.
Such spaces are $C^{1,\alpha}$ Riemannian manifolds for every $\alpha\in(0,1)$, see \cite{ABN},
moreover the Gromov-Hausdorff convergence is actually a $C^{1,\alpha}$ convergence,
see \cite[Ch.~10]{Pe}. One can see that all ingredients of the proof
of Theorem \ref{t:reconstruction} work for such spaces
$\bar M_1$  and $\bar M_2$.
Then Theorem \ref{t:reconstruction} implies that $\bar M_1$  and $\bar M_2$ are isometric,
contrary to the assumption that the Gromov-Hausdorff distance $d_{GH}(M_{1,k},M_{2,k})$
is bounded away from~0.
\end{proof}

\section{Distance differences on the boundary}
\label{sec:boundary}

In this section we consider a variant of the problem where $M=(M^n,g)$
is a complete, connected Riemannian manifold with boundary
and $F=\pd M$.
The distance $d(x,y)$ between $x,y\in M$ is the length of a shortest path
(possibly touching the boundary)
from $x$ to~$y$ in~$M$.
This distance defines the distance difference representation
$\D_F\co M\to \C(F\times F)$ in the usual way.

We attach a collar $\tilde F:=\pd M\times [0,+\infty)$ to $M$ along the boundary
and equip the resulting manifold $\tilde M=M\cup\tilde F$ with a complete Riemannian
metric $\tilde g$ extending $g$.
Now $M$ is a subset of $\tilde M$ bounded by a smooth hypersurface $F=\pd M$.
Denote by $\tilde d$ the distance in $\tilde M$ induced by $\tilde g$
and
by $\D_{\tilde F}$ the distance difference representation
of $\tilde M$ in $\C(\tilde F\times\tilde F)$.
Note that $d(x,y)\ge \tilde d(x,y)$ for all $x,y\in M$.

\begin{lemma}\label{l:DtildeF}
In the above notation,
\be\label{e:DtildeF}
 \|\D_{\tilde F}(x)-\D_{\tilde F}(y)\| \le \|\D_F(x)-\D_F(y)\|
\ee
for all $x,y\in M$.
\end{lemma}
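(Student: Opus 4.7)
\noindent\textit{Proof plan.}
The plan is to reduce distance-difference data at points of $\tilde F$ to distance-difference data at points of $F=\pd M$ via a first-crossing decomposition. The key identity I would aim for is
\be\label{e:first-cross-plan}
 \tilde d(x,p) = \inf_{p'\in F}\bigl(d(x,p') + \tilde d(p',p)\bigr), \qquad x\in M,\ p\in\tilde F,
\ee
with the $\le$-direction obtained by concatenating an almost-minimizing $M$-path from $x$ to $p'$ with an almost-minimizing $\tilde M$-path from $p'$ to $p$. The $\ge$-direction rests on the topological fact that $M\setminus F$ and $\tilde F\setminus F$ are disjoint open subsets of $\tilde M\setminus F$, so any $\tilde M$-path from $x\in M$ to $p\in\tilde F$ must hit $F$; cutting at the first hitting point $p'$ splits the path into an initial subarc lying in $M$ (hence of length $\ge d(x,p')$) followed by a terminal subarc of length $\ge\tilde d(p',p)$. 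Crucially, the initial subarc is confined to $M$ by the first-hitting argument, so we recover the intrinsic distance $d(x,p')$ rather than the possibly smaller $\tilde d(x,p')$. Completeness of $\tilde g$ makes $p'\mapsto d(x,p')+\tilde d(p',p)$ proper on $F$, so the infimum is attained.

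Given \eqref{e:first-cross-plan}, for each pair $p,q\in\tilde F$ I would choose optimizers asymmetrically: pick $p'\in F$ realizing the infimum for $\tilde d(y,p)$ and $q'\in F$ realizing the infimum for $\tilde d(x,q)$. This gives the equalities $\tilde d(y,p)=d(y,p')+\tilde d(p',p)$ and $\tilde d(x,q)=d(x,q')+\tilde d(q',q)$, together with the inequalities $\tilde d(x,p)\le d(x,p')+\tilde d(p',p)$ and $\tilde d(y,q)\le d(y,q')+\tilde d(q',q)$ coming from \eqref{e:first-cross-plan} applied to the other two terms. Combining these four (in)equalities, the external contributions $\tilde d(p',p)$ and $\tilde d(q',q)$ cancel in the estimate
$$
 \bigl[\tilde d(x,p)-\tilde d(y,p)\bigr] - \bigl[\tilde d(x,q)-\tilde d(y,q)\bigr] \le D_x(p',q')-D_y(p',q') \le \|\D_F(x)-\D_F(y)\|.
$$
Swapping the roles of $x$ and $y$ (or equivalently of $p$ and $q$) supplies the matching lower bound, and taking the supremum over $p,q\in\tilde F$ yields \eqref{e:DtildeF}.

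The step I expect to be most delicate is the careful justification of \eqref{e:first-cross-plan}, in particular handling $\tilde M$-paths that might oscillate between $M\setminus F$ and $\tilde F\setminus F$ before terminating in $\tilde F$; the clean way is the first-hitting-time argument outlined above, which forces the initial subarc to lie in $M$ and thereby produces the intrinsic distance $d(x,p')$ on the right-hand side. Once \eqref{e:first-cross-plan} is in place, the rest is a short manipulation, the essential subtlety being the asymmetric pairing (match $p'$ with $y$ and $q'$ with $x$), which is precisely what makes the collar contributions drop out and leaves a bound stated purely in terms of $\D_F$-data.
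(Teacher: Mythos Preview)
Your proposal is correct and is essentially the paper's own argument. The paper also splits at the first boundary crossing: it takes a minimizing $\tilde M$-geodesic $[xp]$, lets $x_1\in F$ be its first crossing point (so $\tilde d(x,p)=d(x,x_1)+\tilde d(x_1,p)$ exactly), bounds $\tilde d(y,p)\le d(y,x_1)+\tilde d(x_1,p)$, does the symmetric thing with $[yq]$ and a crossing point $y_1$, and sums to get $\D_{\tilde F}(y)(p,q)-\D_{\tilde F}(x)(p,q)\le \D_F(y)(x_1,y_1)-\D_F(x)(x_1,y_1)$. Modulo the relabeling $x\leftrightarrow y$, this is exactly your asymmetric pairing; your infimum identity \eqref{e:first-cross-plan} is just the variational reformulation of the paper's geodesic first-crossing step, and your minimizer $p'$ plays the role of the paper's $x_1$.
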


\begin{proof}
Let $x,y\in M$ and $p,q\in\tilde F$.
Consider a minimizing geodesic $[xp]$ of $\tilde M$
and let $x_1$ be the first point where this geodesic crosses the boundary,
i.e., $x_1$ is the point of $[xp]\cap F$ nearest to~$x$.
The interval $[xx_1]$ of $[xp]$ is a minimizing geodesic
of $\tilde M$ contained in $M$, hence it is a shortest path in~$M$.
Thus $d(x,x_1)=\tilde d(x,x_1)$.
By the triangle inequality,
$$
 \tilde d(y,p) \le \tilde d(y,x_1)+\tilde d(x_1,p)
 = \tilde d(y,x_1)+\tilde d(x,p) - \tilde d(x,x_1) .
$$
Since $\tilde d(y,x_1)\le d(y,x_1)$ and $\tilde d(x,x_1)=d(x,x_1)$,
it follows that
$$
 \tilde d(y,p) \le \tilde d(x,p) + d(y,x_1)- d(x,x_1) .
$$
Similarly,
$$
 \tilde d(x,q) \le \tilde d(y,q) + d(x,y_1)- d(y,y_1)
$$
where $y_1$ is the point on $[yq]\cap F$ nearest to~$y$.
Summing the last two inequalities yields that
$$
\tilde d(y,p) + \tilde d(x,q) \le \tilde d(x,p) + \tilde d(y,q) + \D_F(y)(x_1,y_1)-\D_F(x)(x_1,y_1)
$$
or, equivalently,
$$
 \D_{\tilde F}(y)(p,q) - \D_{\tilde F}(x)(p,q) \le \D_F(y)(x_1,y_1)-\D_F(x)(x_1,y_1) .
$$
The right-hand side is bounded by $\|\D_F(y)-\D_F(x)\|$, therefore
$$
\D_{\tilde F}(y)(p,q) - \D_{\tilde F}(x)(p,q) \le \|\D_F(y)-\D_F(x)\| .
$$
Since $p$ and $q$ are arbitrary points of $\tilde F$, \eqref{e:DtildeF} follows.
\end{proof}

Now we prove an analogue of Theorem \ref{t:inverse} for boundary distances.

\begin{proposition}\label{p:dM-bilip}
Let $M^n$ be a complete, connected Riemannian manifold with boundary,
$n\ge 2$, and $F=\pd M$.
Then the map $\D_F\co M\to\C(F\times F)$ is a homeomorphism onto its image
and the inverse map $\D_F^{-1}\co\D_F(M)\to M$ is locally Lipschitz.
\end{proposition}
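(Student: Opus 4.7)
The plan is to bootstrap Theorem \ref{t:inverse} from $\tilde M$ and $\tilde F$ down to $M$ and $F$, using Lemma \ref{l:DtildeF} as the bridge. Since $\tilde F$ has nonempty interior in $\tilde M$, Theorem \ref{t:inverse} applies to $\D_{\tilde F}$. Lemma \ref{l:DtildeF} provides the one-way bound $\|\D_{\tilde F}(x)-\D_{\tilde F}(y)\|\le\|\D_F(x)-\D_F(y)\|$ on $M$, so injectivity of $\D_F$ is inherited directly from that of $\D_{\tilde F}$, and the map $\D_F$ itself is $2$-Lipschitz by the triangle inequality. To see that $\D_F^{-1}$ is continuous, note that if $\D_F(y_k)\to\D_F(x)$ in $\C(F\times F)$, then Lemma \ref{l:DtildeF} forces $\D_{\tilde F}(y_k)\to\D_{\tilde F}(x)$ in $\C(\tilde F\times\tilde F)$, so $y_k\to x$ in $\tilde M$ by Theorem \ref{t:inverse}; since $M$ is closed in $\tilde M$ and carries the subspace topology, $y_k\to x$ in $M$ as well.

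For the local Lipschitz estimate at $x_0\in M$ I split into two cases. If $x_0$ is an interior point, set $r=\tilde d(x_0,\pd M)/3$; for $x,y$ within $\tilde M$-distance $r$ of $x_0$ any $\tilde M$-minimizing geodesic between them has length below $2r$ and so cannot reach $\pd M$, hence lies entirely in the interior of $M$. Consequently $d(x,y)=\tilde d(x,y)$ on this neighborhood, and combining the local Lipschitz bound for $\D_{\tilde F}^{-1}$ from Theorem \ref{t:inverse} with Lemma \ref{l:DtildeF} yields the desired estimate $d(x,y)\le C\|\D_F(x)-\D_F(y)\|$ in a neighborhood of $\D_F(x_0)$.

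The main obstacle is the boundary case: a priori $d$ can be much larger than $\tilde d$, since $\tilde d$ is free to take shortcuts through the collar $\tilde F$. To overcome this I would fix $x_0\in\pd M$ and choose local coordinates on a small neighborhood $V$ of $x_0$ in $\tilde M$ sending $\pd M\cap V$ to $\{x^n=0\}$ and $M\cap V$ to $\{x^n\ge 0\}$. Shrinking $V$ further, the metric $\tilde g$ becomes bi-Lipschitz equivalent to the Euclidean metric on $V$ with some constant $A$. For $x,y\in M\cap V$ the Euclidean segment between their coordinate images lies in the closed half-space by convexity, hence corresponds to a path $\sigma$ in $M$ whose $g$-length satisfies $L_g(\sigma)\le A\,|x-y|_{\mathrm{Euc}}\le A^2\,\tilde d(x,y)$. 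Thus $d(x,y)\le A^2\,\tilde d(x,y)$ on a small enough neighborhood of $x_0$, and the estimate concludes as in the interior case.
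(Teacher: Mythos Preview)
Your proof is correct and follows essentially the same route as the paper: reduce to the boundaryless extension $(\tilde M,\tilde F)$, invoke Theorem~\ref{t:inverse} there, and transfer the local Lipschitz bound back to $M$ via Lemma~\ref{l:DtildeF}. The only difference is that the paper simply asserts that $\tilde d|_{M\times M}$ is locally bi-Lipschitz equivalent to $d$, whereas you supply a proof of this fact by splitting into the interior case (where $d=\tilde d$ near $x_0$) and the boundary case (half-space coordinates and convexity of the Euclidean segment); your added detail is a reasonable justification of what the paper leaves implicit.
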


\begin{proof}
Consider $\tilde M=(\tilde M,\tilde g)$ and $\tilde F$ constructed as above.
By Theorem \ref{t:inverse}, the map $\D_{\tilde F}\co \tilde M \to \C(\tilde F\times\tilde F)$
is injective and its inverse $\D_{\tilde F}^{-1}\co \D_{\tilde F}(\tilde M)\to \tilde M$
is locally Lipschitz.
Consider the map $I\co\D_{\tilde F}(M)\to\D_F(M)$ defined by
$I(\D_{\tilde F}(x))=\D_{F}(x)$, $x\in M$.
By Lemma \ref{l:DtildeF}, $I$ does not decrease distances.
%
%
%
Hence $\D_F=I\circ\D_{\tilde F}|_M$ is injective
and its inverse is locally Lipschitz with respect to
$\tilde d|_{M\times M}$.
Since $\tilde d|_{M\times M}$ is locally bi-Lipschitz equivalent to $d$,
the result follows.
\end{proof}

The next Proposition \ref{p:boundary} 
asserts that $(M,g)$ is uniquely determined by its distance difference data
on the boundary provided that the boundary is nowhere concave.
We say that the boundary $\pd M$ of $M$ is
\textit{nowhere concave} if for every $x\in\pd M$ the second fundamental form
of $\pd M$ at $x$ with respect to the inward-pointing normal vector 
has at least one positive eigenvalue.

The nowhere concavity ensures that the boundary distance function
uniquely determines the $C^\infty$-jet of the metric at the boundary
(see the proof below).
This is the only implication of nowhere concavity that we need;
any other condition allowing the determination of the $C^\infty$-jet
would work just as well.
It is plausible that the boundary assumption in Proposition \ref{p:boundary} can be removed but it is essential for our method to work.

\begin{remark}
Recently M. V. de Hoop and T. Saksala \cite{HS}, using a different approach,
obtained a similar result under weaker {\it visibility} assumptions
on the boundary.
For compact manifolds Proposition \ref{p:boundary} can be deduced
from the results of~\cite{HS}.
\end{remark}

\begin{proposition}\label{p:boundary}
Let $M_1=(M_1^n,g_1)$ and $M_2=(M_2^n,g_2)$, $n\ge 2$, be complete, connected Riemannian manifolds
with a common boundary $F=\pd M_1=\pd M_2$,
which is nowhere concave in both $M_1$ and $M_2$.

Let $\D_F^i$ denote the distance difference representation
of $M_i$ in $\C(F\times F)$, $i=1,2$, and
assume that $\D_F^1(M_1)=\D_F^2(M_2)$.
Then $M_1$ and $M_2$ are isometric via an isometry
fixing the boundary.
\end{proposition}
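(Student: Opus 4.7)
The plan is to reduce the statement to Theorem \ref{t:reconstruction} by attaching a matching collar to both $M_1$ and $M_2$ along the common boundary $F$. I attach $\tilde F = F \times [0, \infty)$ to each $M_i$, producing boundaryless complete completions $\tilde M_i = M_i \cup \tilde F$ as at the start of Section \ref{sec:boundary}, and then apply Theorem \ref{t:reconstruction} to $\tilde M_1, \tilde M_2$ with observation domain the open collar $\tilde F \setminus F$.

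\emph{Step 1 (boundary correspondence).} The equality $\D_F^1(M_1) = \D_F^2(M_2)$ combined with injectivity from Proposition \ref{p:dM-bilip} yields a bijection $\phi\co M_1 \to M_2$ with $\D_F^1(x) = \D_F^2(\phi(x))$. Unpacking this identity gives
\[
  d_2(\phi(x), y) - d_1(x, y) = c(x) \qquad \text{for all } y \in F,
\]
with $c(x)$ depending only on $x$. For $x \in F$, substituting $y = x$ shows $c(x) = d_2(\phi(x), x) \ge 0$. If $c(x) > 0$, then substituting $y = \phi(x)$ in the identity rules out $\phi(x) \in F$ (it would give $d_1(x, \phi(x)) = -c(x) < 0$), so $\phi(x) \in \operatorname{int}(M_2)$; by first variation the minimizing $M_2$-geodesic from $\phi(x)$ to $x$ meets $F$ perpendicularly at $x$, so a Pythagoras-type expansion yields $d_2(\phi(x), y) - c(x) = O(|y-x|^2)$ as $y \to x$ on $F$, while $d_1(x, y)$ grows linearly in $|y - x|$ — contradiction. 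Hence $\phi|_F = \operatorname{id}$ and, in particular, $d_1|_{F \times F} = d_2|_{F \times F}$.

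\emph{Step 2 (jet agreement and compatible extension).} The classical boundary-determination theorem asserts that on a Riemannian manifold with nowhere-concave boundary, the boundary distance function determines the full $C^\infty$-jet of the metric along the boundary, via the leading asymptotics of short boundary-to-boundary minimizers (which must dip into the interior precisely because the boundary is nowhere concave). Combined with Step 1 this forces the $C^\infty$-jets of $g_1$ and $g_2$ along $F$ to agree, so one may pick complete smooth extensions $\tilde g_i$ of $g_i$ on $\tilde M_i$ satisfying $\tilde g_1|_{\tilde F} = \tilde g_2|_{\tilde F}$.

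\emph{Step 3 (reduction to Theorem \ref{t:reconstruction}).} Every path in $\tilde M_i$ from $x \in M_i$ to $p \in \tilde F \setminus F$ must cross $F$, so
\[
  \tilde d_i(x, p) = \min_{q \in F}\bigl(d_i(x, q) + \hat d(q, p)\bigr),
\]
where $\hat d$ is the common collar distance. Substituting $d_1(x, q) = d_2(\phi(x), q) - c(x)$ from Step 1 yields $\tilde d_1(x, p) = \tilde d_2(\phi(x), p) - c(x)$, hence $\D^1_{\tilde F}(x) = \D^2_{\tilde F}(\phi(x))$ for every $x \in M_1$. Extending $\phi$ by the identity across the collar, Theorem \ref{t:reconstruction} applied to $\tilde M_i$ with observation domain $\tilde F \setminus F$ and $U_i = \operatorname{int}(M_i)$ concludes that $\phi$ restricts to a Riemannian isometry between the interiors; by continuity and $\phi|_F = \operatorname{id}$ this extends to the desired boundary-fixing isometry $M_1 \to M_2$. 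The main obstacle is Step 2 — invoking boundary determination of the metric jet — which is where the nowhere-concave hypothesis enters; the rest is a careful extension construction plus a direct appeal to the main theorem.
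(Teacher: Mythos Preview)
Your overall strategy matches the paper's proof: recover $\phi|_F=\mathrm{id}$ and $d_1|_{F\times F}=d_2|_{F\times F}$, invoke boundary determination of the $C^\infty$-jet via the nowhere-concavity hypothesis to build a common collar $(\tilde F,g)$, transfer the distance-difference identity to $\tilde F$, and apply Theorem~\ref{t:reconstruction}. The paper proceeds exactly this way.

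There is, however, a genuine gap in Step~3. The formula
\[
  \tilde d_i(x,p)=\min_{q\in F}\bigl(d_i(x,q)+\hat d(q,p)\bigr)
\]
is false in general: a minimizing path in $\tilde M_i$ from $x\in M_i$ to $p\in\tilde F$ may cross $F$ several times, taking a shortcut through the collar and re-entering $M_i$ (picture $M_i$ non-convex, so that the intrinsic $d_i$ between two boundary points exceeds the distance through the collar). The right-hand side is only an upper bound. The paper replaces this with the correct expression: $\tilde d_i(x,p)$ is the infimum, over all finite sequences $y_0,x_1,y_1,\dots,x_N,y_N\in F$, of
\[
  d_i(x,y_0)+\sum_{j=1}^N\bigl(d_{\tilde F}(y_{j-1},x_j)+d_i(x_j,y_j)\bigr)+d_{\tilde F}(y_N,p).
\]
Your substitution argument then survives unchanged: the only term depending on $x$ is $d_i(x,y_0)$, which shifts by the constant $c(x)$ when $(i,x)$ is replaced by $(2,\phi(x))$; the remaining terms involve only $d_i|_{F\times F}$ (equal by Step~1) and $d_{\tilde F}$ (common by construction), so $\tilde d_1(x,p)-\tilde d_2(\phi(x),p)$ is independent of $p$.

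As a side remark on Step~1: your perpendicularity/Pythagoras argument can be made rigorous, but the paper's route is quicker. Since $\phi$ is a homeomorphism $M_1\to M_2$ it maps boundary to boundary; then for $x\in F$ the point $x$ is the unique minimizer on $F$ of $y\mapsto\D_F^1(x)(y,x_0)$, and since this function equals $\D_F^2(\phi(x))(\cdot,x_0)$ with minimizer $\phi(x)\in F$, one gets $\phi(x)=x$ directly.
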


\begin{proof}
Define $\phi \co M_1\to M_2$ by $\phi=(\D_F^2)^{-1}\circ \D_F^1$.
We are going to show that $\phi$ is an isometry fixing the boundary.

Proposition \ref{p:dM-bilip} implies that $\phi$ is well defined and
it is a homeomorphism between $M_1$ to $M_2$.
Hence it maps boundary to boundary, i.e., $\phi(F)=F$.
Moreover, $\phi|_F=\mbox{id}_F$.
Indeed, a point $x\in F$ is determined by $\D^1_F(x)$ as
the unique minimum point of a function $\D^1_F(x)(\cdot,x_0)$ where
$x_0$ is an arbitrary fixed point of~$F$.
If $x'=\phi(x)$ then $\D^1_F(x)=\D^2_F(x')$ and hence
$x'$ is the unique minimum point of the same function.
Thus $\phi(x)=x$ and $\D_F^1(x)=\D_F^2(x)$ for all $x\in F$.

Let $d_1$ and $d_2$ denote the distances in $M_1$ and $M_2$, resp.
Then $d_1|_{F\times F}=d_2|_{F\times F}$.
Indeed, for any $x,y\in F$,
$$
 d_1(x,y)= \D^1_F(x)(y,x)= \D^2_F(x)(y,x) = d_2(x,y)
$$
where the second identity follows from the fact that $\phi(x)=x$.
Since the boundary $F$ is nowhere concave in $M_i$, $i=1,2$,
the boundary distance function $d_i|_{F\times F}$ determines
the $C^\infty$-jet of $g_i$ at the boundary (in e.g.\ boundary normal coordinates), see \cite{LSU,UW,Z}.
Thus $g_1$ and $g_2$ have the same $C^\infty$-jet at the boundary.

As in the beginning of this section,
we attach a collar $\tilde F:=F\times[0,+\infty)$ to the boundary of $M_1$
and equip $\tilde F$ with a Riemannian metric $g$ smoothly extending~$g_1$.
Since $g_1$ and $g_2$ have the same $C^\infty$-jet at the boundary,
the metric $g$ smoothly extends $g_2$ as well.
Thus we obtain two complete Riemannian manifolds
$(\tilde M_i,\tilde g_i)$, $i=1,2$, where $\tilde M_i=M_i\cup\tilde F$
and $\tilde g_i=g_i\cup g$.

Let $d_{\tilde F}$ denote the length distance in $\tilde F$ (induced by $g$),
$\tilde d_i$ the distance in $\tilde M_i$,
and $D_{\tilde F}^i$ the distance difference representations of $\tilde M_i$
in $\C(\tilde F\times\tilde F)$, $i=1,2$.
Note that the distances $d_i$ and $d$ may differ from the respective restrictions of $\tilde d_i$
since they are determined by shortest paths in possibly non-convex subsets
$M_i$ and $\tilde F$.
We are going to show that $D_{\tilde F}^1(M_1)= D_{\tilde F}^2(M_2)$.
More precisely, we prove that
\be\label{e:tildeF}
 \D^1_{\tilde F}(x) = \D^2_{\tilde F}(\phi(x))
\ee
for all $x\in M_1$.

Let $x\in M_1$ and $y\in\tilde F$.
The distance $\tilde d_1(x,y)$ equals the infimum of lengths of
piecewise smooth paths in $\tilde M_1$ crossing $F$ finitely many times.
Hence
\be\label{e:tilded}
 \tilde d_1(x,y) = \inf 
 \left\{ d_1(x,y_0) + \sum_{j=1}^N \bigl(d_{\tilde F}(y_{j-1},x_j)+d_1(x_j,y_j)\bigr) + d_{\tilde F}(y_N,y) \right\}
\ee
where the infimum is taken over all finite sequences $y_0,\dots,y_N$ and $x_1,\dots,x_N$ in~$F$.
For $x'=\phi(x)\in M_2$, the distance $\tilde d_2(x',y)$ is given by the same formula
with $x'$ in place of $x$ and $d_2$ in place of $d_1$.
By the definition of $\phi$ there is a constant $\tau=\tau(x)\in\R$ such that
$d_2(x',y_0)=d_1(x,y_0)+\tau$ for all $y_0\in F$.
Since $d_1|_{F\times F}=d_2|_{F\times F}$, we have 
$d_2(x_j,y_j)=d_1(x_j,y_j)$ for any $x_j,y_j\in F$.
This and \eqref{e:tilded} imply that $\tilde d_2(x',y)=\tilde d_1(x,y)+\tau$.
Since $\tau$ does not depend on $y$, this implies that
$$
 \tilde d_2(x',y) - \tilde d_2(x',z) = \tilde d_1(x,y) - \tilde d_1(x,z)
$$
for all $y,z\in\tilde F$,
and \eqref{e:tildeF} follows.
By Theorem \ref{t:reconstruction}, \eqref{e:tildeF} implies that $\phi$ is an isometry
from $M_1$ to $M_2$.
This finishes the proof of Proposition \ref{p:boundary}.
\end{proof}


\begin{thebibliography}{99}
\bibitem{ABN}
A.D. Aleksandrov, V.N. Berestovskij, I.G. Nikolaev,
{\em Generalized Riemannian spaces}, Russian Math. Surveys {\bf 41} (1986), no. 3, 1--54.

\bibitem{BBI}
D. Burago, Yu. Burago, S. Ivanov,
\emph{A course in metric geometry},
Graduate Studies in Mathematics, {\bf 33}, Amer. Math. Soc., 2001.

\bibitem{BGP}
Yu. Burago, M. Gromov, G. Perelman,
{\em A. D. Aleksandrov spaces with curvatures bounded below}.
Russian Math. Surveys {\bf 47} (1992), no. 2, 1--58.

\bibitem{HS}
M. V. de Hoop, T. Saksala,
{\em Inverse problem of Travel time difference functions on compact Riemannian manifold with boundary},
Journal of Geometric Analysis (to appear),
{\tt arXiv:1807.02576}.

\bibitem{Federer}
H. Federer,
{\em Geometric measure theory}, Springer-Verlag, 1969.

\bibitem{KKL}
A. Katchalov, Y. Kurylev, M. Lassas,
Inverse boundary spectral problems.
Chapman \& Hall/CRC, Boca Raton, FL, 2001.

\bibitem{Kyr}
Y. Kurylev, {\em Multidimensional Gel'fand inverse problem and boundary distance map},
In ``Inverse Problems Related with Geometry'', Ed. H. Soga (1997), 1--15.



\bibitem{LS}
M. Lassas, T. Saksala.
{\em Determination of a Riemannian manifold from the distance difference functions}, Asian J. Math (to appear),
{\tt arXiv:1510.06157}.


\bibitem{LS1}
M. Lassas, T. Saksala,
{\em Distance difference representations of subsets of
complete Riemannian manifolds},
RIMS K\^oky\^uroku, No. 2023, Spectral and
Scattering Theory and Related Topics, 2016/01/20–2016/01/22

\bibitem{LSU}
M. Lassas, V. Sharafutdinov, G. Uhlmann, 
{\em Semiglobal boundary rigidity for Riemannian
metrics}, Math. Ann. {\bf 325} (2003), 767--793.


\bibitem{MS}
S.B. Myers, N.E. Steenrod,
{\em The group of isometries of a Riemannian manifold}
Ann. of Math. (2) {\bf 40} (1939), no. 2, 400--416. 

\bibitem{Pe}
P. Petersen, {\em Riemannian geometry}, 2nd edition, Springer, 2006.

\bibitem{UW}
G. Uhlmann, J.-N. Wang,
{\em Boundary determination of a Riemannian metric by the localized boundary distance function},
Adv. in Appl. Math. {\bf 31} (2003), no. 2, 379--387. 

\bibitem{Z}
X. Zhou,
{\em Recovery of the $C^\infty$ jet from the boundary distance function},
Geom. Dedicata {\bf 160} (2012), 229--241. 

\end{thebibliography}
\end{document}